\theoremstyle{plain}
\newtheorem{theorem}{Theorem}
\newtheorem{lemma}{Lemma}
\newtheorem{corollary}{Corollary}
\theoremstyle{definition}
\newtheorem{definition}{Definition}
\newtheorem*{remark*}{Remark}
\icmltitlerunning{The Impact of the Mini-batch Size on the Variance of Gradients in Stochastic Gradient Descent}
\begin{document}

\twocolumn[
\icmltitle{The Impact of the Mini-batch Size on the \\
Variance of Gradients in Stochastic Gradient Descent}

% It is OKAY to include author information, even for blind
% submissions: the style file will automatically remove it for you
% unless you've provided the [accepted] option to the icml2020
% package.

% List of affiliations: The first argument should be a (short)
% identifier you will use later to specify author affiliations
% Academic affiliations should list Department, University, City, Region, Country
% Industry affiliations should list Company, City, Region, Country

% You can specify symbols, otherwise they are numbered in order.
% Ideally, you should not use this facility. Affiliations will be numbered
% in order of appearance and this is the preferred way.
% \icmlsetsymbol{equal}{*}

% \author{Xin Qian, Diego Klabjan}

\begin{icmlauthorlist}
\icmlauthor{Xin Qian}{to}
\icmlauthor{Diego Klabjan}{to}
\end{icmlauthorlist}
% {\centering Department of Industrial Engineering and Management Science, Northwestern University}

\icmlaffiliation{to}{Department of Industrial Engineering and Management Science, Northwestern University, Illinois, USA}
% % \icmlaffiliation{goo}{Googol ShallowMind, New London, Michigan, USA}
% % \icmlaffiliation{ed}{School of Computation, University of Edenborrow, Edenborrow, United Kingdom}

% % \icmlcorrespondingauthor{Cieua Vvvvv}{c.vvvvv@googol.com}
% % \icmlcorrespondingauthor{Eee Pppp}{ep@eden.co.uk}

% % You may provide any keywords that you
% % find helpful for describing your paper; these are used to populate
% % the "keywords" metadata in the PDF but will not be shown in the document
% \icmlkeywords{Machine Learning, ICML}

\vskip 0.3in
]

% this must go after the closing bracket ] following \twocolumn[ ...

% This command actually creates the footnote in the first column
% listing the affiliations and the copyright notice.
% The command takes one argument, which is text to display at the start of the footnote.
% The \icmlEqualContribution command is standard text for equal contribution.
% Remove it (just {}) if you do not need this facility.

\printAffiliationsAndNotice{}  % leave blank if no need to mention equal contribution
% \printAffiliationsAndNotice % otherwise use the standard text.

\begin{abstract}
\noindent The mini-batch stochastic gradient descent (SGD) algorithm is widely used in training machine learning models, in particular deep learning models. We study SGD dynamics under linear regression and two-layer linear networks, with an easy extension to deeper linear networks, by focusing on the variance of the gradients, which is the first study of this nature. In the linear regression case, we show that in each iteration the norm of the gradient is a decreasing function of the mini-batch size $b$ and thus the variance of the stochastic gradient estimator is a decreasing function of $b$. For deep neural networks with $L_2$ loss we show that the variance of the gradient is a polynomial in $1/b$. The results back the important intuition that smaller batch sizes yield lower loss function values which is a common believe among the researchers. The proof techniques exhibit a relationship between stochastic gradient estimators and initial weights, which is useful for further research on the dynamics of SGD. We empirically provide further insights to our results on various datasets and commonly used deep network structures.
\end{abstract}

\section{Introduction} \label{sec:1}
Deep learning models have achieved great success in a variety of tasks including natural language processing, computer vision, and reinforcement learning \cite{goodfellow2016deep}. Despite their practical success, there are only limited studies of the theoretical properties of deep learning; see survey papers \cite{sun2019optimization, fan2019selective} and references therein. 
    The general problem underlying deep learning models is to optimize (minimize) a loss function, defined by the deviation of model predictions on data samples from the corresponding true labels. The prevailing method to train deep learning models is the mini-batch stochastic gradient descent (SGD) algorithm and its variants \cite{bottou1998online, bottou2018optimization}. SGD updates model parameters by calculating a stochastic approximation of the full gradient of the loss function, based on a random selected subset of the training samples called a mini-batch. 
    
    It is well-accepted that selecting a large mini-batch size reduces the training time of deep learning models, as computation on large mini-batches can be better parallelized on processing units. For example, Goyal et. al. \cite{goyal2017accurate} scale ResNet-50 \cite{he2016deep} from a mini-batch size of 256 images and training time of 29 hours, to a larger mini-batch size of 8,192 images. Their training achieves the same level of accuracy while reducing the training time to one hour. However, noted by many researchers, larger mini-batch sizes suffer from a worse generalization ability \cite{lecun2012efficient, keskar2019large}. Therefore, many efforts have been made to develop specialized training procedures that achieve good generalization using large mini-batch sizes \cite{hoffer2017train, goyal2017accurate}. Smaller batch sizes have the advantage of allegedly offering better generalization (at the expense of a higher training time).
    
    We hypothesize that smaller sizes lead to lower training loss and, unfortunately, decrease stability of the algorithm. The latter follows from the fact that the smaller is the batch size, more stochasticity and volatility is introduced. After all, if the batch size equals to the number of samples, there is no stochasticity in the algorithm. To this end, we conjecture that the variance of the gradient in each iteration is a decreasing function of the mini-batch size. The conjecture is the focus of the work herein. We are able to prove it in the convex linear regression case and to show significant progress in a two layer neural network setting with samples based on a normal distribution. In this case we show that the variance is a polynomial in the reciprocal of the mini-batch size and that it is decreasing for large enough mini-batch sizes. The increased variance as the mini-batch size decreases should also intuitively imply convergence to lower training loss values and in turn better prediction and generalization ability (these relationships are yet to be confirmed analytically; but we provide empirical evidence to their validity). 
    
    Another line of research focuses on how to choose an optimal mini-batch size based on different criteria \cite{smith2017bayesian, gower2019sgd}. However, these papers make strong assumptions on the loss function properties (strong or point or quasi convexity, or constant variance near stationary points) or about the formulation of the SGD algorithm (continuous time interpretation by means of differential equations). The statements are approximate in nature and thus not mathematical claims. They also focus on convergence and generalization while our goal is variance. The theoretical results regarding the relationship between the mini-batch size and the performance (variance, loss, generalization ability, etc.) of the SGD algorithm applied to general machine learning models are still missing. The work herein partially addresses this gap by showing the impact of the mini-batch size on the variance of gradients in SGD. 

    In the linear regression case, we show that in each iteration the norm of any linear combination of sample-wise gradients is a decreasing function of the mini-batch size $b$. As a special case, the variance of the stochastic gradient estimator and the full gradient at the iterate in step $t$ are also decreasing functions of $b$ at any iteration step $t$. In addition, the proof provides a recursive relationship between the norm of gradients and the model parameters at each iteration. This recursive relationship can be used to calculate any quantity related to the stochastic gradient or full gradient at any iteration with respect to the initial weights. We give structural results and not explicit formulas which are impossible to obtain.  
    For the two-layer linear neural network with $L_2$-loss and samples drawn from a normal distribution, we show that in each iteration step $t$ the trace of any product of the stochastic gradient estimators and weight matrices is a polynomial in $1/b$ with coefficients a sum of products of the initial weights. As a special case, the variance of the stochastic gradient estimator is a polynomial in $1/b$ without the constant term and therefore it is a decreasing function of $b$ when $b$ is large enough. The results can be easily extended to general deep linear networks. As a comparison, other papers that study theoretical properties of two-layer networks either fix one layer of the network, or assume the over-parameterized property of the model and they study convergence, while our paper makes no such assumptions on the model and we study variance with respect to the mini-batch size. The proof also reveals the structure of the coefficients of the polynomial, and thus serving as a tool for future work on proving other properties of the stochastic gradient estimators.
    
    The proofs are involved and require several key ideas. The main one is to show a more general result than it is necessary in order to carry out the induction. The induction is not only on time step $t$ but also on the batch size with the latter one being tricky to handle. New concepts and definitions are introduced in order to handle the more general case. Along the way we show a result of general interest establishing expectation of several rank one matrices sampled from a normal distribution intertwined with constant matrices. 
    
    In conclusion, we study the dynamics of SGD under linear regression and a two-layer linear network setting by focusing on the decreasing property of the variance of stochastic gradient estimators with respect to the mini-batch size. The proof techniques can also be used to derive other properties of the SGD dynamics in regard to the mini-batch size and initial weights. To the best of authors' knowledge, the work is the first one to theoretically study the impact of the mini-batch size on the variance of the gradient, under mild assumptions on the network and the loss function. We support our theoretical results by experiments. We further experiment on other state-of-the-art deep learning models and datasets to empirically show the validity of the conjectures about the impact of mini-batch size on average loss, average accuracy and the generalization ability of the model.
    
    The major contributions of this paper are as follows.
    \begin{itemize}
        \item For linear regression, we show that the norm of any number of linear combinations of the coordinates of the gradient is a decreasing function of the mini-batch size (Theorem \ref{thm:2}). As a special case, the variance of the stochastic gradient estimators is also a decreasing function of the mini-batch size, for all iterations and all choices of learning rates (Corollary \ref{col:1}) that are independent of the mini-batch size.
        \item For a two-layer linear network, we show that any non-negative trace of the product of weight matrices and stochastic gradient estimators is a decreasing function of the mini-batch size for a large enough value. Here samples are drawn from a normal distribution. As a special case, the variance of the stochastic gradient estimators is also a decreasing function for large enough mini-batch size, for all iterations and all choices of learning rates (Theorem \ref{thm:varianceRepresentation}) that are independent of the mini-batch size. The proof can be easily extended to more than two layers.
        \item In the two-layer network we also show that the variance is a polynomial in $1/b$. In order to establish all of the results we design a new proof technique where the main idea is to show a more general result than only considering variance in order to apply induction in a non-trivial way. 
        \item We verify the theoretical results on various datasets and provide further understanding. We further empirically show that the results extend to other widely used network structures and hold for all choices of the mini-batch sizes. We also empirically verify that, on average, in each iteration the loss function value and the generalization ability (measured by the gap between accuracy on the training and test sets) are all decreasing functions of the mini-batch size.
    \end{itemize}
    
    The rest of the manuscript is structured as follows. In Section \ref{sec:2} we review the literature while in Section \ref{sec:3} we present the theoretical results on how mini-batch sizes impact the variance of stochastic gradient estimators, under different models including linear regression and deep linear networks. Section \ref{sec:4} introduces the experiments that verify our theorems and provide further insights into the impact of the mini-batch sizes on SGD performance. We defer the proofs of the theorems and other technical details to Appendix A and experimental details to Appendix B.

\section{Literature Review}\label{sec:2}
Stochastic gradient descent type methods are broadly used in machine learning \cite{bottou1991stochastic, lecun1998gradient, bottou2018optimization}. The performance of SGD highly relies on the choice of the mini-batch size. It has been widely observed that  choosing a large mini-batch size to train deep neural networks appears to deteriorate generalization \cite{lecun2012efficient}. This phenomenon exists even if the models are trained without any budget or limits, until the loss function value ceases to improve \cite{keskar2019large}. One explanation for this phenomenon is that large mini-batch SGD produces ``sharp'' minima that generalize worse \cite{hochreiter1997flat, keskar2019large}. Specialized training procedures to achieve good performance with large mini-batch sizes have also been proposed \cite{hoffer2017train, goyal2017accurate}.

It is well-known that SGD has a slow asymptotic rate of convergence due to its inherent variance \cite{nesterov2013introductory}. Variants of SGD that can reduce the variance of the stochastic gradient estimator, which yield faster convergence, have also been suggested. The use of the information of full gradients to provide variance control for stochastic gradients is addressed in \cite{johnson2013accelerating, roux2012stochastic, shalev2013stochastic}. The works in \cite{lei2017non, li2014efficient, schmidt2017minimizing} further improve the efficiency and complexity of the algorithm by carefully controling the variance.

There is prior work focusing on studying the dynamics of SGD. Neelakantan et. al. \cite{neelakantan2015adding} propose to add isotropic white noise to the full gradient to study the ``structured'' variance. The works in \cite{pmlr-v70-li17f, mandt2017stochastic, jastrzkebski2017three} connect SGD with stochastic differential equations to explain the property of converged minima and generalization ability of the model. Smith and Le \cite{smith2017bayesian} propose an ``optimal'' mini-batch size which maximizes the test set accuracy by a Bayesian approach. The Stochastic Gradient Langevin Dynamics (SGLD, a variant of SGD) algorithm for non-convex optimization is studied in \cite{zhang2017hitting, mou2018generalization}.

In most of the prior work about the convergence of SGD, it is assumed that the variance of stochastic gradient estimators is upper-bounded by a linear function of the norm of the full gradient, e.g. Assumption 4.3 in \cite{bottou2018optimization}. One exception is \cite{gower2019sgd} which gives more precise bounds of the variance under different sampling methods. These bounds are still dependent on the model parameters at the corresponding iteration. To the best of the authors' knowledge, there is no existing result connecting the variance of stochastic gradient estimators with the initial weights and the mini-batch size. This paper partially solves this problem.

\section{Analysis} \label{sec:3}
    Mini-batch SGD is a lighter-weight version of gradient descent. Suppose that we are given a loss function $L(w)$ where $w$ is the collection (vector, matrix, or tensor) of all model parameters. At each iteration $t$, instead of computing the full gradient $\grad_w L(w_t)$, SGD randomly samples a mini-batch set $\calB_t$ that consists of $b=|\calB_t|$ training instances and sets \begin{equation*}
        w_{t+1} \gets w_t - \alpha_t \grad_w L_{\calB_t}(w_t),
    \end{equation*}
    where the positive scalar $\alpha_t$ is the learning rate (or step size) and $\grad_w L_{\calB_t}(w_t)$ denotes the stochastic gradient estimator based on mini-batch $\calB_t$. 
    
    % \begin{equation*}
    %     w_{t+1} \gets w_t - \alpha_t g_t, \quad g_t \triangleq \grad_w L_{\calB_t}(w_t), \quad |\calB_t| = b
    % \end{equation*}
    
    An important property of the stochastic gradient estimator $\grad_w L_{\calB_t}(w_t)$ is that it is an unbiased estimator, i.e. $\mathbb{E} \grad_w L_{\calB_t}(w_t) = \grad_w L(w_t)$, where the expectation is taken over all possible choices of mini-batch $\calB_t$. However, it is unclear what is the value of $$\var \pth{\grad_w L_{\calB_t}(w_t)} \triangleq \mathbb{E} \norm{\grad_w L_{\calB_t}(w_t)}^2 - \norm{\mathbb{E} \grad_w L_{\calB_t}(w_t)}^2. $$ 
    
    Intuitively, we should have \begin{equation*} \label{eq:sgVariancePropto}
        \var \pth{\grad_w L_{\calB_t}(w_t)} \; \propto \; \frac{n^2}{b} \var \pth{\grad_w L(w_t)}
    \end{equation*}
    where $n$ is the number of training samples and stochasticity on the right-hand side comes from mini-batch samples behind $w_t$. The works in \cite{smith2017bayesian, gower2019sgd} also point out this relationship, but a rigorous proof is missing. In addition, even the quantities $\grad_w L(w_t)$ and $\var \pth{\grad_w L(w_t)}$ are still challenging to compute as we do not have direct formulas of their precise values. Besides, as we choose different $b$'s, their values are not comparable as we end up with different $w_t$'s.
    
    A plausible idea to address these issues is to represent $\mathbb{E} \grad_w L_{\calB_t}(w_t)$ and $\var \pth{\grad_w L_{\calB_t}(w_t)}$ using the fixed and known quantities $w_0, b, t$, and $\alpha_t$. In this way, we can further discover the properties, like decreasing with respect to $b$, of $\mathbb{E} \grad_w L_{\calB_t}(w_t)$ and $\var \pth{\grad_w L_{\calB_t}(w_t)}$. The biggest challenge is how to connect the quantities in iteration $t$ with those of iteration $0$. This is similar to discovering the properties of a stochastic differential equation at time $t$ given only the dynamics of the stochastic differential equation and the initial point. 
    
    In this section, we address these questions under two settings: linear regression and a deep linear network. In Section \ref{subsec:linearRegression} with a linear regression setting, we provide explicit formulas for calculating any norm of the linear combination of sample-wise gradients. We therefore show that the $\var \pth{\grad_w L_{\calB_t}(w_t)}$ is a decreasing function of the mini-batch size $b$. In Section \ref{subsec:linearNetwork} with a deep linear network setting and samples drawn from a normal distribution, we show that any trace of the product of weight matrices and stochastic gradient estimators is a polynomial in $1/b$ with finite degree. We further prove that $\var \pth{\grad_w L_{\calB_t}(w_t)}$ is a decreasing function of the mini-batch size $b > b_0$ for some constant $b_0$.

   For a random matrix $M$, we define $\var\pth{M} \triangleq \Expect \norm{\text{vec}(M)}^2 - \norm{\Expect\text{vec}(M)}^2$ where $\textrm{vec}(M)$ denotes the vectorization of matrix $M$. We denote $\fromto{m}{n} \triangleq \{m, m+1, \ldots,n\}$ if $m\le n$, and $\emptyset$ otherwise. We use $[n] \triangleq \fromto{1}{n}$ as an abbreviation. For clarity, we use the superscript $b$ to distinguish the variables with different choices of the mini-batch size $b$. In each iteration $t$, we use $\calB_t^b$ to denote the batch of samples (or sample indices) to calculate the stochastic gradient. We denote by $\calF_t^b$ the filtration of information before calculating the stochastic gradient in the $t$-th iteration, i.e. $\calF_t^b \triangleq \sth{w_0,\calB_0^b, \ldots, \calB_{t-1}^b}$.

\subsection{Linear Regression} \label{subsec:linearRegression}
    In this subsection, we discuss the dynamics of SGD applied in linear regression. Given data points $(x_1, y_1), \cdots, (x_n, y_n)$, where $x_i \in \mathbb{R}^{p}$ and $y_i \in \mathbb{R}$, we define the loss function to be \begin{equation} \label{eq:empiricalLoss}
        L(w) = \frac{1}{n} \sum_{i=1}^n L_i(w) = \frac{1}{n} \sum_{i=1}^n \frac{1}{2} \pth{w^\transp x_i - y_i}^2,
    \end{equation}
    where $w \in \mathbb{R}^{p}$ are the model parameters. We consider minimizing \eqref{eq:empiricalLoss} by mini-batch SGD. Note that the bias term in the general linear regression models is omitted, however, adding the bias term does not change the result of this section. Formally, we first choose a mini-batch size $b$ and initial weights $w_0$. In each iteration $t$, we sample $\calB_t^b$, a subset of $[n]$ with cardinality $b$, and update the parameters by $$w_{t+1}^b = w_t^b - \alpha_t g_t^b, $$
    where $g_t^b = \frac{1}{b} \sum_{i\in \calB_t^b} \grad L_i\pth{w_t^b}.$
    % \begin{equation}
    %     g_t^b = \frac{1}{b} \sum_{i\in \calB_t^b} \grad L_i\pth{w_t^b}.
    % \end{equation}
    
    We first show the relationship between the variance of stochastic gradient $g_t^b$ and the full gradient $\grad L\pth{w_t^b}$ and sample-wise gradient $\grad L_i\pth{w_t^b}, i\in[n]$, derived by considering all possible choices of the mini-batch $\calB_t^b$. Readers should note that Lemma \ref{lemma:1} actually holds for all models with $L_2$-loss, not merely linear regression (since in the proof we do not need to know the explicit form of $L_i(w)$).
    
    \begin{lemma} \label{lemma:1}
        Let $c_b \triangleq \frac{n-b}{b(n-1)} \ge 0$. For any matrix $A \in \mathbb{R}^{p\times p}$ we have {\small \begin{align*}
            \condvar{}{Ag_t^b}{\calF_t^b} &= \condexp{}{\norm{Ag_t^b}^2}{\calF_t^b} - \norm{A\grad L\pth{w_t^b}}^2 \\
            &= c_b \left(\frac{1}{n}\sum_{i=1}^n\norm{A\grad L_i\pth{w_t^b}}^2 - \norm{A\grad L\pth{w_t^b}}^2\right).
        \end{align*}}
    \end{lemma}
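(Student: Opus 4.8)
The plan is to condition on the filtration $\calF_t^b$ throughout, so that $w_t^b$ and hence every sample-wise gradient $\grad L_i(w_t^b)$ is deterministic, and the only remaining randomness is the uniform draw of the $b$-subset $\calB_t^b \subseteq [n]$. To lighten notation I would set $v_i \triangleq A\grad L_i(w_t^b)$ for $i\in[n]$ and $\bar v \triangleq \frac1n\sum_{i=1}^n v_i$, so that $A\grad L(w_t^b) = \bar v$ and $A g_t^b = \frac1b\sum_{i\in\calB_t^b} v_i$. The first equality in the statement is then immediate: since $g_t^b$ is an unbiased estimator of $\grad L(w_t^b)$, we have $\condexp{}{Ag_t^b}{\calF_t^b} = \bar v$, and the definition of variance gives $\condvar{}{Ag_t^b}{\calF_t^b} = \condexp{}{\norm{Ag_t^b}^2}{\calF_t^b} - \norm{\bar v}^2$. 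It therefore remains only to evaluate $\condexp{}{\norm{Ag_t^b}^2}{\calF_t^b}$ and match it with the claimed formula.

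For the core computation I would introduce the membership indicators $\mathbf{1}_i \triangleq \mathbf{1}[i\in\calB_t^b]$, so that $Ag_t^b = \frac1b\sum_{i=1}^n \mathbf{1}_i v_i$ and $\norm{Ag_t^b}^2 = \frac{1}{b^2}\sum_{i,j} \mathbf{1}_i\mathbf{1}_j\, v_i^\transp v_j$. Because $\calB_t^b$ is a uniformly random $b$-subset of $[n]$ (sampling without replacement), a short counting argument gives $\condexp{}{\mathbf{1}_i}{\calF_t^b} = \frac bn$ and $\condexp{}{\mathbf{1}_i\mathbf{1}_j}{\calF_t^b} = \frac{b(b-1)}{n(n-1)}$ for $i\neq j$. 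Plugging these into the expansion and separating the diagonal terms ($i=j$) from the off-diagonal terms yields, with the shorthand $P\triangleq\sum_i\norm{v_i}^2$ and $Q\triangleq\norm{\sum_i v_i}^2 = n^2\norm{\bar v}^2$, an expression of the form $\condexp{}{\norm{Ag_t^b}^2}{\calF_t^b} = \frac{1}{bn}P + \frac{b-1}{bn(n-1)}(Q-P)$, where I have used $\sum_{i\neq j} v_i^\transp v_j = Q - P$.

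The remaining step is purely algebraic: subtract $\norm{\bar v}^2 = Q/n^2$ and regroup the $P$- and $Q$-coefficients. The $P$-coefficient collapses to $\frac{1}{bn}\cdot\frac{n-b}{n-1} = \frac{c_b}{n}$, and the key point of the calculation is that the $Q$-coefficient, $\frac{b-1}{bn(n-1)} - \frac1{n^2}$, simplifies—after clearing denominators the numerator becomes $n(b-1) - b(n-1) = b - n$—to exactly $-\frac{c_b}{n^2}$. This produces $\condvar{}{Ag_t^b}{\calF_t^b} = c_b\pth{\frac Pn - \frac{Q}{n^2}} = c_b\pth{\frac1n\sum_i\norm{v_i}^2 - \norm{\bar v}^2}$, which is the claim once $v_i$ and $\bar v$ are unfolded back to $A\grad L_i(w_t^b)$ and $A\grad L(w_t^b)$.

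The computation is elementary, so the only real obstacle is bookkeeping: one must make sure the finite-population correction emerges cleanly, i.e. that the two cancellations (in the $P$- and $Q$-coefficients) both reproduce the common factor $c_b=\frac{n-b}{b(n-1)}$. I would note that the argument never uses the explicit quadratic form of $L_i$—only unbiasedness of $g_t^b$ and linearity of $A$—which is precisely why the lemma holds for any $L_2$-type loss, as remarked before the statement.
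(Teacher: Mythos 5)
Your proposal is correct and follows essentially the same route as the paper's proof: both rest on the same sampling-without-replacement moments $\Pr[i\in\calB_t^b]=\frac{b}{n}$ and $\Pr[i,j\in\calB_t^b]=\frac{b(b-1)}{n(n-1)}$ and the same regrouping that produces $c_b$. The only cosmetic difference is that the paper first computes the matrix $\condexp{}{g_t^b (g_t^b)^\transp}{\calF_t^b}$ and then applies the trace identity with $A^\transp A$, whereas you apply $A$ to the sample-wise gradients up front and expand the squared norm directly; the combinatorics and algebra are identical.
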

    
    Lemma \ref{lemma:1} provides a bridge to connect the norm and variance of $g_t^b$ with sample-wise gradients $\grad L_i\pth{w_t^b}, i\in [n]$. Therefore, if we can further discover the properties of $\grad L_i\pth{w_t^b}, i\in [n]$, we are able to calculate the variance of $g_t^b$. Lemma \ref{lemma:3} addresses this problem by showing the relationship between any linear combination of $\grad L_i\pth{w_t^b}$ and $\grad L_i\pth{w_{t-1}^b}$.
    
    \begin{lemma} \label{lemma:3}
        For any set of square matrices $\sth{A_1, \cdots, A_n} \in \mathbb{R}^{p \times p}$, if we denote $A = \sum_{i=1}^n A_i x_i x_i^\transp$, then we have {\scriptsize \begin{align*} 
                & \condexp{}{\norm{\sum_{i=1}^{n}  A_i \grad L_i  \pth{w_{t+1}^b}}^2}{\calF_0}  =
                \condexp{}{\norm{\sum_{i=1}^{n} B_i \grad L_i\pth{w_{t}^b}}^2}{\calF_0} \\
                &+ \frac{\alpha_t^2 c_b}{n^2} \sum_{k=1}^n \sum_{l=1}^n\condexp{}{\norm{\sum_{i=1}^{n} B_i^{kl} \grad L_i\pth{w_{t}^b}}^2}{\calF_0}.
            \end{align*}}
        Here $B_i = A_i - \frac{\alpha_t}{n} A $; $B_i^{kl} = A$ if $i=k, i\neq l$, $B_i^{kl} = A$ if $i=l, i\neq k$, and $B_i^{kl}$ equals the zero matrix, otherwise.
        % $$ B_i^{kl}=\left\{
        %     \begin{aligned}
        %         A &  & i=k, i\neq l, \\
        %         -A &  & i=l, i\neq k, \\
        %         0 &  & \textrm{otherwise},
        %     \end{aligned}
        %     \right. $$
    \end{lemma}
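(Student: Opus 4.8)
The plan is to reduce the time-$(t+1)$ quantity to a time-$t$ quantity in a single SGD step, and the observation that makes this possible is a clean recursion for the sample-wise gradients. Since $L_i(w) = \frac12(w^\transp x_i - y_i)^2$ we have $\grad L_i(w) = (x_i^\transp w - y_i)x_i$, and substituting $w_{t+1}^b = w_t^b - \alpha_t g_t^b$ gives
\begin{equation*}
\grad L_i(w_{t+1}^b) = \grad L_i(w_t^b) - \alpha_t\, x_i x_i^\transp g_t^b.
\end{equation*}
This is the only place the explicit form of the loss enters; everything else is linear-algebraic. Multiplying by $A_i$, summing over $i$, and using $A = \sum_i A_i x_i x_i^\transp$ yields
\begin{equation*}
\sum_{i=1}^n A_i \grad L_i(w_{t+1}^b) = \sum_{i=1}^n A_i \grad L_i(w_t^b) - \alpha_t A g_t^b ,
\end{equation*}
so the task collapses to controlling $\condexp{}{\norm{\sum_i A_i\grad L_i(w_t^b) - \alpha_t A g_t^b}^2}{\calF_0}$.

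First I would condition on $\calF_t^b$ and invoke the tower property, which applies since $\calF_0 \subseteq \calF_t^b$. Given $\calF_t^b$ the iterate $w_t^b$, and hence each $\grad L_i(w_t^b)$, is fixed, and the only randomness is the batch $\calB_t^b$. Writing $A g_t^b = \frac1b\sum_{j\in\calB_t^b} A\grad L_j(w_t^b)$, its conditional mean is $A\grad L(w_t^b)$ because each index enters the batch with probability $b/n$. I would then split off the mean,
\begin{equation*}
\sum_i A_i\grad L_i(w_{t+1}^b) = \underbrace{\Bigl(\sum_i A_i\grad L_i(w_t^b) - \alpha_t A\grad L(w_t^b)\Bigr)}_{\calF_t^b\text{-measurable}} - \alpha_t\bigl(Ag_t^b - A\grad L(w_t^b)\bigr),
\end{equation*}
so the cross term vanishes under $\condexp{}{\cdot}{\calF_t^b}$ and the conditional squared norm splits into a deterministic square plus $\alpha_t^2\,\condvar{}{Ag_t^b}{\calF_t^b}$. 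The deterministic square is exactly $\norm{\sum_i B_i\grad L_i(w_t^b)}^2$, since $A\grad L(w_t^b) = \frac1n\sum_i A\grad L_i(w_t^b)$ turns $A_i - \frac{\alpha_t}{n}A$ into $B_i$.

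The fluctuation term is where Lemma \ref{lemma:1} does the real work: applied with this $A$, it gives $\condvar{}{Ag_t^b}{\calF_t^b} = c_b\bigl(\frac1n\sum_i\norm{A\grad L_i(w_t^b)}^2 - \norm{A\grad L(w_t^b)}^2\bigr)$. The final step is purely combinatorial: writing $u_i = A\grad L_i(w_t^b)$ and using the pairwise-difference identity $n\sum_i\norm{u_i}^2 - \norm{\sum_i u_i}^2 = \sum_{k<l}\norm{u_k - u_l}^2$, the empirical variance becomes a double sum over sample-index pairs, which is precisely the pairwise structure that the matrices $B_i^{kl}$ are built to realize. Taking $\condexp{}{\cdot}{\calF_0}$ of the identity established conditionally on $\calF_t^b$ then closes the argument. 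The hard part is not any single estimate but the bookkeeping that keeps the recursion \emph{closed}: the right-hand side must again be a sum of terms of the form $\condexp{}{\norm{\sum_i C_i\grad L_i(w_t^b)}^2}{\calF_0}$ for explicit matrices $C_i$ (the $B_i$ and the $B_i^{kl}$), as this self-similar form is exactly what enables the subsequent induction over $t$; I would take particular care with the signs and the pair-counting in the combinatorial identity, since that is the easiest place for a factor to slip.
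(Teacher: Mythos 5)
Your proposal is correct and follows essentially the same route as the paper's proof: the one-step identity $\grad L_i(w_{t+1}^b) = \grad L_i(w_t^b) - \alpha_t x_i x_i^\transp g_t^b$, the tower property with a bias--variance split conditional on $\calF_t^b$, Lemma \ref{lemma:1} for the fluctuation term, and the pairwise-difference identity to produce the $B_i^{kl}$ structure. Your caution about pair-counting is well placed: with your (correct) identity $n\sum_i\norm{u_i}^2-\norm{\sum_i u_i}^2=\sum_{k<l}\norm{u_k-u_l}^2$, the ordered double sum in the displayed formula should carry an extra factor of $\tfrac{1}{2}$, a harmless discrepancy (it affects neither nonnegativity nor monotonicity in $b$) that is present in the paper's own derivation as well.
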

    
    Lemma \ref{lemma:3} provides the tool to reduce the iteration $t$ by one. Therefore, we can easily use it to recursively calculate the norm of any linear combinations of the sample-wise gradients, for all iterations $t$. Combining the fact that $c_b$ is a decreasing function of $b$, we are able to show Theorem \ref{thm:1}.

    \begin{theorem} \label{thm:1}
        For any $t \in \naturals$ and any matrices $A_i \in \mathbb{R}^{p\times p}, i\in[n]$, $\condexp{}{\norm{\sum_{i=1}^{n} A_i \grad L_i\pth{w_{t}^b}}^2}{\calF_0}$ is a decreasing function of $b$ for $b \in [n]$.
    \end{theorem}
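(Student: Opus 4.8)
The plan is to induct on $t$, using Lemma \ref{lemma:3} to peel off one iteration at a time and Lemma \ref{lemma:1}'s key observation that $c_b$ is decreasing in $b$. The target quantity $\condexp{}{\norm{\sum_{i=1}^{n} A_i \grad L_i\pth{w_{t}^b}}^2}{\calF_0}$ is a specific instance of a more general family of expressions — expectations of squared norms of arbitrary linear combinations $\sum_i A_i \grad L_i(w_t^b)$ — and Lemma \ref{lemma:3} expresses any such quantity at time $t+1$ in terms of a finite collection of the same type of quantities at time $t$ (one "main" term with matrices $B_i = A_i - \frac{\alpha_t}{n}A$, plus a sum of $n^2$ correction terms with matrices $B_i^{kl}$). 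This self-referential structure is exactly what makes the induction go through: the class of expressions is closed under the one-step reduction.

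First I would set up the induction on $t$. For the base case $t=0$, the quantity $\condexp{}{\norm{\sum_{i=1}^{n} A_i \grad L_i\pth{w_{0}^b}}^2}{\calF_0}$ is deterministic given $\calF_0 = \sth{w_0,\dots}$ and does not depend on $b$ at all (no sampling has occurred yet), so it is trivially a non-increasing function of $b$. For the inductive step, I would assume that for the given $t$, every expression of the form $\condexp{}{\norm{\sum_{i=1}^{n} C_i \grad L_i\pth{w_{t}^b}}^2}{\calF_0}$ is a decreasing function of $b$, for \emph{all} choices of matrices $C_i \in \mathbb{R}^{p\times p}$. This is the crucial strengthening: I cannot just assume the statement for one fixed tuple $(A_i)$, because Lemma \ref{lemma:3} reduces it to expressions involving the \emph{different} matrix tuples $(B_i)$ and $(B_i^{kl})$.

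Then I would apply Lemma \ref{lemma:3} with the matrices $A_i$ to rewrite the time-$(t+1)$ quantity as the main term at time $t$ (with matrices $B_i$) plus $\frac{\alpha_t^2 c_b}{n^2}$ times a sum of time-$t$ quantities (with matrices $B_i^{kl}$). By the inductive hypothesis, both the main term and each summand in the correction are decreasing functions of $b$. It remains to handle the prefactor $c_b = \frac{n-b}{b(n-1)}$: since $c_b \ge 0$ and each correction expectation is a non-negative quantity (being a squared norm), and since $c_b$ is itself decreasing in $b$, the product $c_b \cdot (\text{decreasing, non-negative})$ is decreasing in $b$. A sum of decreasing functions is decreasing, so the whole right-hand side is decreasing, completing the step.

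The main obstacle is the strengthening of the inductive hypothesis just described: the naive induction on the single tuple $(A_i)$ fails because the recursion in Lemma \ref{lemma:3} introduces new matrix tuples, so one must quantify over all matrix families at each $t$ and carry that universally quantified statement through the induction. A secondary subtlety is verifying monotonicity of the product $c_b \cdot \condexp{}{\norm{\sum_i B_i^{kl}\grad L_i(w_t^b)}^2}{\calF_0}$: strictly speaking one needs both factors non-negative and decreasing to conclude the product decreases, which holds here because squared norms are non-negative and $c_b \ge 0$ is decreasing — I would state this elementary fact explicitly rather than gloss over it, since it is the one place where the non-negativity of the expectations is genuinely used.
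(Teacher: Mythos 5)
Your proposal is correct and follows essentially the same route as the paper: induction on $t$ with the hypothesis quantified over all matrix tuples, the base case being $b$-independent, Lemma \ref{lemma:3} for the one-step reduction, and the observation that a product of non-negative decreasing functions of $b$ (namely $c_b$ and each correction expectation) is decreasing — which the paper isolates as Lemma \ref{lemma:product}. No gaps.
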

    
    Theorem \ref{thm:1} states that the norm of any linear combinations of the sample-wise gradients is a decreasing function of $b$. Combining Lemma \ref{lemma:1} which connects the variance of $g_t^b$ with the linear combination of $\grad L_i\pth{w_{t}^b}$'s, and the fact that $\grad L\pth{w_{t}^b} = \frac{1}{n}\sum_{i=1}^n\grad L_i\pth{w_{t}^b}$, we have Theorem \ref{thm:2}. 
    
    \begin{theorem} \label{thm:2}
        Fixing initial weights $w_0$, both $\var\left(Bg_t^b \, \middle| \calF_0 \right)$ and $\var\left(B\grad L\pth{w_t^b} \, \middle| \calF_0 \right)$ are decreasing functions of mini-batch size $b$ for all $b \in [n]$, $t \in \naturals$, and all square matrices $B \in \mathbb{R}^{p \times p}$.
    \end{theorem}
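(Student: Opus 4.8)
The plan is to write each conditional variance as a conditional second moment minus a squared conditional mean,
\[
\var\pth{Bg_t^b \mid \calF_0} = \condexp{}{\norm{Bg_t^b}^2}{\calF_0} - \norm{\condexp{}{Bg_t^b}{\calF_0}}^2,
\]
and analogously for $B\grad L\pth{w_t^b}$, and then to argue separately that the second-moment term is decreasing in $b$ while the squared-mean term is \emph{constant} in $b$. The two facts together give the claim, since a decreasing function minus a constant is decreasing.

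First I would dispose of the mean term. Because $\grad L_i(w)=x_ix_i^\transp w - y_i x_i$ is affine in $w$, the full gradient is $\grad L(w)=Hw-v$ with $H=\frac1n\sum_i x_ix_i^\transp$ and $v=\frac1n\sum_i y_i x_i$, and since $g_t^b$ is conditionally unbiased the conditional mean obeys the deterministic recursion $\condexp{}{w_{t+1}^b}{\calF_0}=(I-\alpha_t H)\condexp{}{w_t^b}{\calF_0}+\alpha_t v$ with $\condexp{}{w_0^b}{\calF_0}=w_0$. None of $H,v,\alpha_t,w_0$ depends on $b$, so by induction $\condexp{}{w_t^b}{\calF_0}$ is independent of $b$, and hence by the tower property so is $\condexp{}{Bg_t^b}{\calF_0}=\condexp{}{B\grad L\pth{w_t^b}}{\calF_0}=B\pth{H\condexp{}{w_t^b}{\calF_0}-v}$. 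Thus both squared-mean terms are $b$-independent and may be set aside.

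It then remains to treat the second moments. For $B\grad L\pth{w_t^b}$ this is immediate: writing $B\grad L\pth{w_t^b}=\sum_i \frac1n B\,\grad L_i\pth{w_t^b}$ and applying Theorem~\ref{thm:1} with $A_i=\frac1n B$ shows $P(b):=\condexp{}{\norm{B\grad L\pth{w_t^b}}^2}{\calF_0}$ is decreasing in $b$, which already settles $\var\pth{B\grad L\pth{w_t^b}\mid\calF_0}$. For $Bg_t^b$ I would condition on $\calF_t^b$, invoke Lemma~\ref{lemma:1} with $A=B$, and take $\condexp{}{\cdot}{\calF_0}$, obtaining
\[
\condexp{}{\norm{Bg_t^b}^2}{\calF_0} = P(b) + c_b\, D(b), \qquad D(b):=\condexp{}{\tfrac1n\textstyle\sum_{i}\norm{B\grad L_i\pth{w_t^b}}^2-\norm{B\grad L\pth{w_t^b}}^2}{\calF_0},
\]
where $c_b=\frac{n-b}{b(n-1)}$ is nonnegative and decreasing in $b$.

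The crux is to show $D(b)$ is nonnegative and decreasing, since then $c_bD(b)$ is a product of two nonnegative decreasing functions and $P(b)+c_bD(b)$ a sum of decreasing functions. Nonnegativity is Jensen's inequality $\norm{B\grad L\pth{w_t^b}}^2\le\frac1n\sum_i\norm{B\grad L_i\pth{w_t^b}}^2$ applied pointwise. For monotonicity I would use the identity $\frac1n\sum_i\norm{u_i}^2-\norm{\bar u}^2=\frac1n\sum_i\norm{u_i-\bar u}^2$ with $u_i=B\grad L_i\pth{w_t^b}$, noting that each centered term is itself a linear combination $u_i-\bar u=\sum_j\pth{\delta_{ij}-\frac1n}B\,\grad L_j\pth{w_t^b}$; applying Theorem~\ref{thm:1} once per $i$ with coefficients $A_j^{(i)}=\pth{\delta_{ij}-\frac1n}B$ shows every summand of $D(b)$ is decreasing, hence so is $D(b)$. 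This rewriting is the main obstacle: the sample-variance term produced by Lemma~\ref{lemma:1} is not literally one squared norm, so the monotonicity of the product $c_bD(b)$ is not visible from Theorem~\ref{thm:1} alone, and recognizing the decomposition into $n$ centered squared norms is what makes the argument close. The only other point requiring care is that ``decreasing'' is over integer $b\in[n]$, so the product claim should be read as $c_bD(b)\ge c_{b+1}D(b+1)$, which follows from $c_b\ge c_{b+1}\ge0$ and $D(b)\ge D(b+1)\ge0$.
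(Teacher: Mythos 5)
Your proposal is correct, and for half of the statement it takes a genuinely different and arguably cleaner route than the paper. The paper splits Theorem~\ref{thm:2} into two sub-theorems: for $\var\left(B\grad L\pth{w_t^b}\,\middle|\,\calF_0\right)$ it runs a separate induction on $t$, deriving via Lemma~\ref{lemma:1} the recursion $\var\left(B\grad L\pth{w_t^b}\,\middle|\,\calF_0\right)=\var\left(B\pth{I-\alpha_t C}\grad L\pth{w_{t-1}^b}\,\middle|\,\calF_0\right)+\frac{\alpha_t^2 c_b}{n^2}\sum_{i\neq j}\condexp{}{\norm{BC\grad L_i\pth{w_{t-1}^b}-BC\grad L_j\pth{w_{t-1}^b}}^2}{\calF_0}$ and closing it with Theorem~\ref{thm:1} and the product lemma. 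You bypass that induction entirely by observing that, since $\grad L_i$ is affine in $w$ and $g_t^b$ is conditionally unbiased, $\condexp{}{w_t^b}{\calF_0}$ satisfies a deterministic $b$-independent recursion, so the squared-mean term is constant in $b$ and the full-gradient variance reduces to a single application of Theorem~\ref{thm:1} to the second moment. This buys a shorter argument at the cost of using the linear-regression structure explicitly (the paper's recursion is closer in spirit to its machinery for the network case). For $\var\left(Bg_t^b\,\middle|\,\calF_0\right)$ the two arguments are essentially parallel: both pass through Lemma~\ref{lemma:1} and then rewrite the sample-dispersion term $\frac{1}{n}\sum_i\norm{B\grad L_i}^2-\norm{B\grad L}^2$ as a sum of squared norms of linear combinations so that Theorem~\ref{thm:1} applies — you via the centered identity $\frac{1}{n}\sum_i\norm{u_i-\bar u}^2$ with coefficients $\pth{\delta_{ij}-\frac{1}{n}}B$, the paper via pairwise differences $\norm{B\grad L_i-B\grad L_j}^2$ with coefficients $\pm B$. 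Both choices are valid; your final remark about interpreting monotonicity of the product $c_bD(b)$ over integer $b$ is the same content as the paper's Lemma~\ref{lemma:product}. No gaps.
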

    
    As a special case, Corollary \ref{col:1} guarantees that the variance of the stochastic gradient estimator is a decreasing function of $b$.
    
    \begin{corollary} \label{col:1}
        Fixing initial weights $w_0$, both $\var\left(g_t^b \, \middle| \calF_0 \right)$ and $\var\left(\grad L\pth{w_t^b} \, \middle| \calF_0 \right)$ are decreasing functions of mini-batch size $b$ for all $b \in [n]$ and $t \in \naturals$.
    \end{corollary}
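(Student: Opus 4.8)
The plan is to recognize that Corollary \ref{col:1} is precisely the specialization of Theorem \ref{thm:2} to the choice $B = I_p$, where $I_p$ denotes the $p \times p$ identity matrix. First I would observe that $I_p \in \mathbb{R}^{p \times p}$ is a square matrix, so Theorem \ref{thm:2} applies with $B = I_p$ without any modification. Since $I_p g_t^b = g_t^b$ and $I_p \grad L\pth{w_t^b} = \grad L\pth{w_t^b}$, the two quantities $\var(I_p g_t^b \mid \calF_0)$ and $\var(I_p \grad L\pth{w_t^b} \mid \calF_0)$ appearing in Theorem \ref{thm:2} coincide exactly with $\var(g_t^b \mid \calF_0)$ and $\var(\grad L\pth{w_t^b} \mid \calF_0)$. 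Theorem \ref{thm:2} guarantees that both are decreasing functions of $b$ on $[n]$ for every $t \in \naturals$, which is exactly the assertion of the corollary.

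There is essentially no obstacle here, since the entire content of the corollary is already contained in Theorem \ref{thm:2} and specializing $B$ to the identity requires no further computation. The only point worth checking is that the variance functional $\var(\cdot \mid \calF_0)$, which is defined through the vectorization operator, reduces to the ordinary conditional variance for an $\mathbb{R}^p$-valued random vector. Because $\text{vec}$ acts as the identity on a column vector, we have $\text{vec}(g_t^b) = g_t^b$, so the definition $\var(M) = \mathbb{E}\norm{\text{vec}(M)}^2 - \norm{\mathbb{E}\,\text{vec}(M)}^2$ collapses to the standard expression and the identification is immediate. Consequently the corollary follows directly, with no need to revisit the induction or the recursive machinery of Lemma \ref{lemma:1} and Lemma \ref{lemma:3}.
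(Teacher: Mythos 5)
Your proposal is correct and matches the paper's own proof, which likewise just sets $B = I_p$ (the paper's proof text cites Theorem \ref{thm:1}, but this is evidently a typo for Theorem \ref{thm:2}, the statement you correctly specialize). Your extra remark that $\mathrm{vec}$ acts as the identity on column vectors, so the matrix-variance definition collapses to the usual one, is a harmless and accurate addition.
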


    % \begin{theorem} \label{theorem:varianceFullGradient}
    %      Fixing initial weights $w_0$, $\var\left(B\grad L\pth{w_t^b} \, \middle| \calF_0 \right)$ is a decreasing function of mini-batch size $b$ for all $b \in [n]$, $t \in \naturals$, and all square matrices $B \in \mathbb{R}^{p \times p}$.
    % \end{theorem}
    
    % \begin{theorem} \label{thm:decLinear}
    %     Fixing initial weights $w_0$, $\var\left(Bg_t^b \, \middle| \calF_0 \right)$ is a decreasing function of mini-batch size $b$ for all $b \in [n]$, $t \in \naturals$, and all square matrices $B \in \mathbb{R}^{p \times p}$.
    % \end{theorem}
    
    % \begin{corollary} \label{col:varianceFG}
    %     Fixing initial weights $w_0$, $\var\left(\grad L\pth{w_t^b} \, \middle| \calF_0 \right)$ is a decreasing function of mini-batch size $b$ for all $b \in [n]$ and $t \in \naturals$.
    % \end{corollary}
    
    % \begin{corollary} \label{col:varianceSto}
    %     Fixing initial weights $w_0$, $\var\left(g_t^b \, \middle| \calF_0 \right)$ is a decreasing function of mini-batch size $b$ for all $b \in [n]$ and $t \in \naturals$.
    % \end{corollary}
    
    In conclusion, we provide a framework for calculating the explicit value of variance of the stochastic gradient estimators and the norm of any linear combination of sample-wise gradients. We further show that the variance of both the full gradient and the stochastic gradient estimator are a decreasing function of the mini-batch size $b$.

\subsection{Two-layer Linear Network with Online Setting} \label{subsec:linearNetwork}
    In this section, we study the dynamics of SGD on deep linear networks. We consider the two-layer linear network while the results and proofs can be easily extended to deep linear network with any depth. We consider the population loss \begin{equation*} \label{eq:populationLoss} 
        \calL (w) = \Expect_{x\sim \calN(0, I_p)} \left[ \frac{1}{2} \norm{W_2 W_1 x - W_2^* W_1^* x}^2 \right]
    \end{equation*}
    under the teacher-student learning framework \cite{hinton2015distilling} with $w=(W_1,W_2)$ a tuple of two matrices. Here $W_1 \in \mathbb{R}^{p_1 \times p}$ and $W_2 \in \mathbb{R}^{p_2 \times p_1}$ are parameter matrices of the student network and $W_1^*$ and $W_2^*$ are the fixed ground-truth parameters of the teacher network. We use online SGD to minimize the population loss $\calL(w)$. Formally, we first choose a mini-batch size $b$ and initial weight matrices $\sth{W_{0,1}, W_{0,2}}$. In each iteration $t$, we draw $b$ independent and identically distributed samples $x_{t,i}, i\in[b]$ from $\calN(0, I_p)$ to form the mini-batch $\calB_t^b$ and update the weight matrices by $W_{t+1,1}^b = W_{t,1}^b - \alpha_t g_{t,1}^b$ and $W_{t+1, 2}^b = W_{t,2}^b - \alpha_t g_{t,2}^b$, 
    % \begin{align*}
    %     W_{t+1,1}^b = W_{t,1}^b - \alpha_t g_{t,1}^b, \\
    %     W_{t+1, 2}^b = W_{t,2}^b - \alpha_t g_{t,2}^b,
    % \end{align*}
    where \begin{align}
        g_{t, 1}^b &= \frac{1}{b} \sum_{i=1}^b \nabla_{W_{t,1}^b} \pth{\frac{1}{2} \norm{W_{t,2}^b W_{t,1}^b x_{t,i} - W_2^* W_1^* x_{t,i}}^2} \notag \\
        &= \frac{1}{b}\sum_{i=1}^b {W_{t,2}^b}^\transp \pth{W_{t,2}^b W_{t,1}^b - W_{2}^* W_{1}^*} x_{t,i} x_{t,i}^\transp, \label{eq:gt1}\\
        g_{t, 2}^b &= \frac{1}{b} \sum_{i=1}^b \nabla_{W_{t,2}^b} \pth{\frac{1}{2} \norm{W_{t,2}^b W_{t,1}^b x_{t,i} - W_2^* W_1^* x_{t,i}}^2} \notag \\
        &= \frac{1}{b}\sum_{i=1}^b \pth{W_{t,2}^b W_{t,1}^b - W_{2}^* W_{1}^*} x_{t,i} x_{t,i}^\transp{W_{t,1}^b}^\transp. \label{eq:gt2}
    \end{align}
    
    The derivation follows from the formulas in \cite{IMM2012-03274}. In the following, we use $\calW_t^b = W_{t,2}^b W_{t,1}^b - W_{2}^* W_{1}^* $ to denote the gap between the product of model weights and ground-truth weights. 
    
    For ease of developing our proofs, we first introduce the definition of a \emph{multiplicative term} in Definition \ref{def:mul}. Intuitively, a multiplicative term is a matrix which equals to the product of its parameter matrices and constant matrices (and their transpose). The degree of a matrix $A$ in a multiplicative term $M$ is the number of appearance of $A$ and $A^\transp$ in $M$. The degree of $M$ is exactly the number of appearances of all weight matrices in $M$.

    \begin{definition} \label{def:mul}
        % as the set of all matrices in $S$ and all transpose of the matrices in $S$
        For any set of matrices $\calS$, we denote $\widebar{\calS} = \calS \cup \{M^\transp: M \in \calS\}$. Given a set of parameter matrices $\calX = \{X_1, X_2, \cdots, X_{n_v}\}$ and constant matrices $\calC = \{C_1, C_2, \cdots, C_{n_c}\}$, we say that a matrix $M$ is \textit{a multiplicative term of parameter matrices $\calX$ and constant matrices $\calC$} if it can be written in the form of $$ M = M(\calX, \calC) = \prod_{i=1}^k A_i,$$ where $A_i \in \widebar{\calX} \cup \widebar{\calC}$. We write $\deg(X_j; M) = \sum_{i=1}^k \pth{\mathbbm{1}\sth{X_j = A_i} + \mathbbm{1}\sth{X_j = A_i^\transp}}, j \in [n_v]$ as the degree of parameter matrix $X_j$ in $M$, $\deg(C_j; M) = \sum_{i=1}^k \pth{\mathbbm{1}\sth{C_j = A_i} + \mathbbm{1}\sth{C_j = A_i^\transp}}, j \in [n_c]$ as the degree of constant matrix $C_j$ in $M$, and $\deg(M) = \sum_{i=1}^k \mathbbm{1}\sth{A_i \in \widebar{\calX}} = \sum_{j=1}^{n_v} \deg(X_j; M)$ as the total degree of the parameter matrices of $M$.
    \end{definition}
    
    As pointed out in the Section \ref{sec:1}, the difficulty of studying the dynamics of SGD is how to connect the quantities in iteration $t$ with fixed variables, like initial weights $W_{0,1}, W_{0,2}$ and mini-batch size $b$. We overcome this challenge by the following two lemmas. Lemma \ref{lemma:GtoW} provides the relationship between $g_{t,i}^b, i=1,2$ and $W_{t,i}^b, i=1,2$ by taking expectation over the distribution of random samples in $\calB_t^b$. Lemma \ref{lemma:WtoG} shows the relationship between $W_{t,i}^b, i=1,2$ and $g_{t-1,i}^b, i=1,2$ using \eqref{eq:gt1} and \eqref{eq:gt2}.

    \begin{lemma} \label{lemma:GtoW}
        For multiplicative terms $M_i, i \in \fromto{0}{m} $ of parameter matrices $\left\{ g_{t,1}^b, g_{t,2}^b \right\}$ and constant matrices $\sth{W_{t,1}^b, W_{t,2}^b, W_1^*, W_2^*}$ with degree $d_i$, respectively, we denote $M = \prod_{i=1}^m \trace{M_i} M_0$ and $d = \sum_{i=0}^m d_i$. There exists a set of multiplicative terms $\sth{M_{ij}^k, i\in [m_k], j \in \fromto{0}{m_{ki}}, k \in \fromto{0}{q}}$ of parameter matrices $\sth{W_{t,1}^b, W_{t,2}^b}$ and constant matrices $\sth{W_1^*, W_2^*}$ such that \begin{align*}
                \condexp{}{M}{\calF_t^b} = N_0 + N_1 \frac{1}{b} + \cdots + N_d \frac{1}{b^d},
                % = \condexp{}{\prod_{i=1}^m \trace{M_i} M_0}{\calF_t^b}
            \end{align*}
        where $N_k = \sum_{i=1}^{m_k} \prod_{j=1}^{m_{ki}} \trace{M_{ij}^k} M_{i0}^k, k\in \fromto{0}{d}  $. Here $m_k, m_{ki}$ are constants independent of $b$, and $\sum_{j=0}^{m_{ki}} \deg\pth{M_{ij}^k} \le 3d + \sum_{i=0}^m \pth{\deg\pth{W_{t,1}^b; M_i} + \deg(W_{t,2}^b; M_i)}$.
    \end{lemma}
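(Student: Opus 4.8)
The plan is to exploit the fact that, conditional on $\calF_t^b$, the matrices $W_{t,1}^b$, $W_{t,2}^b$, and hence $\calW_t^b = W_{t,2}^b W_{t,1}^b - W_2^* W_1^*$, are all deterministic, so the only randomness in $M$ comes from the $b$ i.i.d.\ samples $x_{t,1},\dots,x_{t,b}\sim\calN(0,I_p)$. The first step is to record that each appearance of $g_{t,1}^b$, $g_{t,2}^b$, or their transposes in any $M_i$ is \emph{linear} in the rank-one matrices $x_{t,i}x_{t,i}^\transp$: from \eqref{eq:gt1}--\eqref{eq:gt2}, every such factor has the form $\tfrac1b\sum_{i=1}^b C_L\, x_{t,i}x_{t,i}^\transp\, C_R$ for deterministic frame matrices $C_L,C_R$ built from ${W_{t,2}^b}^\transp,\calW_t^b$ (for $g_{t,1}^b$) or $\calW_t^b,{W_{t,1}^b}^\transp$ (for $g_{t,2}^b$), and symmetrically for the transposes. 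Substituting these expansions into $M=\prod_{i=1}^m\trace{M_i}\,M_0$ and pulling out the $d=\sum_i d_i$ summation signs, I would write
\[
M=\frac{1}{b^{d}}\sum_{i_1=1}^b\cdots\sum_{i_d=1}^b \Phi\pth{x_{t,i_1}x_{t,i_1}^\transp,\dots,x_{t,i_d}x_{t,i_d}^\transp},
\]
where $\Phi$ is a fixed expression --- a product of $\trace{\cdot}$ factors times one matrix --- whose only non-deterministic inputs are the $d$ rank-one factors.

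The second step is to take the conditional expectation and organize the index sum by the partition $\pi$ of $\{1,\dots,d\}$ recording which positions share a common sample value. Because the samples are i.i.d., the value $\Expect[\Phi\mid\calF_t^b]$ is the same for every tuple realizing a given $\pi$, and the number of such tuples is the falling factorial $b(b-1)\cdots(b-|\pi|+1)$, a polynomial in $b$ of degree $|\pi|\le d$ with integer, $b$-independent coefficients. Hence $\condexp{}{M}{\calF_t^b}=\tfrac{1}{b^{d}}\sum_\pi b(b-1)\cdots(b-|\pi|+1)\,E_\pi$, and dividing each falling factorial by $b^{d}$ produces a polynomial in $1/b$. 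Collecting contributions over all finitely many partitions yields the claimed form $N_0+N_1/b+\cdots+N_d/b^{d}$; the number of partitions and of pairings below is controlled purely by $d$, so the summand counts $m_k,m_{ki}$ are $b$-independent constants.

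The crux, and the step I expect to be the main obstacle, is evaluating $E_\pi$. I would compute it by iteratively integrating out each block's shared sample value $v$: conditionally on the other (not-yet-integrated) vectors, the factors carrying $v$ form a chain $x x^\transp D_1 x x^\transp D_2\cdots D_{k-1} x x^\transp$ (or such a chain enclosed in a $\trace{\cdot}$) intertwined with the currently-constant matrices $D_j$. Evaluating $\Expect$ of this chain is exactly the ``intertwined rank-one Gaussian moment'' identity flagged in Section~\ref{sec:1}, which I would prove by Wick/Isserlis, summing over perfect matchings of the $2k$ copies of the Gaussian vector: a loop-closing pairing emits a $\trace{\cdot}$ of constant matrices while an open pairing extends the matrix product, so the result is again a sum of products of traces of constant matrices times one constant matrix, with the $x$'s eliminated. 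Iterating over all blocks removes all randomness, leaving only $W_{t,1}^b,W_{t,2}^b,W_1^*,W_2^*$ and $\calW_t^b$; expanding each $\calW_t^b=W_{t,2}^bW_{t,1}^b-W_2^*W_1^*$ into its two multiplicative terms puts every $N_k$ into the required form $\sum_i\prod_j\trace{M_{ij}^k}M_{i0}^k$.

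Finally I would verify the degree bound by bookkeeping on the parameter matrices $W_{t,1}^b,W_{t,2}^b$. Since integrating out the $x$'s only contracts rank-one factors and never introduces a new parameter matrix, the total parameter-matrix degree of each output term equals the degree already present plus the degree carried in by the frames $C_L,C_R$. A factor of $g_{t,1}^b$ contributes ${W_{t,2}^b}^\transp$ and, through $\calW_t^b$, at most $W_{t,2}^bW_{t,1}^b$, i.e.\ parameter degree at most $3$; symmetrically for $g_{t,2}^b$ and the transposes. Summed over the $d$ gradient factors this gives the $3d$ term, while the parameter matrices already sitting in the $M_i$ as constant matrices contribute $\sum_{i=0}^m\pth{\deg(W_{t,1}^b;M_i)+\deg(W_{t,2}^b;M_i)}$, yielding exactly the stated bound $\sum_{j=0}^{m_{ki}}\deg(M_{ij}^k)\le 3d+\sum_{i=0}^m\pth{\deg(W_{t,1}^b;M_i)+\deg(W_{t,2}^b;M_i)}$.
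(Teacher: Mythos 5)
Your proposal is correct and follows essentially the same route as the paper's proof: expand each gradient factor into its $b$ rank-one summands to write $M$ as $b^{-d}$ times a sum of $b^d$ terms, group those terms by which positions share a sample, extract the polynomial in $1/b$ from the group cardinalities, evaluate each representative's conditional expectation via the intertwined rank-one Gaussian moment identity (the paper's Lemma~\ref{lemma:normalProperty}, proved there by induction on the number of distinct samples and a citation to Magnus rather than by Wick/Isserlis, but to the same effect), then expand $\calW_t^b = W_{t,2}^b W_{t,1}^b - W_2^* W_1^*$ and bookkeep degrees exactly as you do. The one point where your version is arguably cleaner is the counting step: you count tuples per partition type with falling factorials $b(b-1)\cdots(b-|\pi|+1)$, whose integer coefficients are manifestly independent of $b$, whereas the paper writes each equivalence-class size via its base-$b$ digits $e_{i,j}$, which are not obviously $b$-independent.
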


    % TODO: add an example for this lemma
    % We first give an example of the statement for $M = \trace{{W_{t,2}^b}^\transp g_{t,2}^b} g_{t,1}^b$ and then show the proof. For abbreviation, we remove all subscripts of $t$ and superscripts of $b$ in $W_{t,1}^b, W_{t,2}^b, g_{t,1}^b$ and $g_{t,2}^b$ only for this example.
    
    % Note that \begin{align*}
    %      &\; \condexp{}{\trace{W_2^\transp g_2} g_1}{\calF_t^b} \\
    %     =&\; \condexp{}{\trace{W_2^\transp \frac{1}{b} \sum_{i=1}^b \pth{W_2 W_1 - W_2^* W_1^*} x_{t,i} x_{t,i}^\transp W_1^\transp }\frac{1}{b} \sum_{j=1}^b W_2^\transp \pth{W_2 W_1 - W_2^* W_1^*} x_{t,j} x_{t,j}^\transp }{\calF_t^b} \\
    %     =&
    % \end{align*}

    \begin{lemma} \label{lemma:WtoG}
            For multiplicative term $M_i, i \in \fromto{0}{m}$ of parameter matrices $\sth{W_{t,1}^b, W_{t,2}^b}$ and constant matrices $\sth{W_1^*, W_2^*}$ of degree $d_i$, let $d = 2^{d_0+\cdots +d_m}$. There exists a set of multiplicative terms $\{M_{ik}, i \in \fromto{0}{m}, k \in [d]\}$ of parameter matrices $\sth{g_{t,1}^b, g_{t,2}^b}$ and constant matrices $\sth{W_{t,1}^b, W_{t,2}^b, W_1^*, W_2^*}$ such that \begin{align*}
                \prod_{i=1}^m \trace{M_i} M_0 = \sum_{k=1}^{d} \prod_{i=1}^{m} \trace{M_{ik}} M_{0k},
            \end{align*}
            where $\sum_{i=0}^m \deg\pth{M_{ik}} \le d$.
    \end{lemma}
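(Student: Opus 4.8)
The plan is to prove the identity by substituting the SGD update rule into each factor and expanding by distributivity; the bound $d = 2^{d_0 + \cdots + d_m}$ on the number of resulting terms is then simply the number of branches of this expansion. Recall the update $W_{t+1,i}^b = W_{t,i}^b - \alpha_t g_{t,i}^b$, $i=1,2$, which writes the next weight matrix as a constant matrix $W_{t,i}^b$ plus the scalar multiple $-\alpha_t g_{t,i}^b$ of a gradient matrix. Substituting this relation for each weight factor appearing in a given $M_i$ replaces that factor by a sum of a term of degree zero (in $\sth{W_{t,1}^b, W_{t,2}^b, W_1^*, W_2^*}$, viewed as constants) and a term of degree one in the new parameters $\sth{g_{t,1}^b, g_{t,2}^b}$. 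Because transposition distributes over the sum, the same replacement applies verbatim to any factor of the form ${W_{t,i}^b}^\transp$, so the substitution is compatible with the transposes permitted in Definition \ref{def:mul}.

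First I would expand a single multiplicative term. A term $M_i$ of degree $d_i$ is, by definition, a product in which exactly $d_i$ of the factors lie in $\widebar{\sth{W_{t,1}^b, W_{t,2}^b}}$. Replacing each of these $d_i$ factors by its two-term sum and distributing over the $d_i$ binary choices expresses $M_i$ as a sum of exactly $2^{d_i}$ multiplicative terms, each of which is now a multiplicative term of the parameter matrices $\sth{g_{t,1}^b, g_{t,2}^b}$ and the constant matrices $\sth{W_{t,1}^b, W_{t,2}^b, W_1^*, W_2^*}$. In each such term a given original factor contributes degree one in the new parameters precisely when the gradient branch is chosen and degree zero when the constant branch is chosen, so every resulting term has degree at most $d_i$. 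The scalar factors $-\alpha_t$ and the accompanying signs are constants that do not affect degrees, and I absorb them into the corresponding terms (a harmless scalar multiple).

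Next I would assemble the whole expression. Since each $\trace{M_i}$ is a scalar, the product $\prod_{i=1}^m \trace{M_i} M_0$ distributes over the $m+1$ independent expansions obtained above, so it equals a single sum indexed by the Cartesian product of the per-factor branch choices. The number of choices is $\prod_{i=0}^m 2^{d_i} = 2^{d_0 + \cdots + d_m} = d$, matching the index set $k \in [d]$; for each $k$ I read off the selected branch of every $M_i$ as $M_{ik}$ and keep all $d$ summands without combining, giving $\prod_{i=1}^m \trace{M_i} M_0 = \sum_{k=1}^d \prod_{i=1}^m \trace{M_{ik}} M_{0k}$ as required.

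Finally the degree bound follows immediately: for each fixed $k$ the single-term analysis gives $\deg(M_{ik}) \le d_i$, hence $\sum_{i=0}^m \deg(M_{ik}) \le \sum_{i=0}^m d_i \le 2^{\,d_0 + \cdots + d_m} = d$, where the last step uses $x \le 2^x$ for $x \ge 0$. The bound is deliberately loose. I expect the main (if modest) obstacle to be bookkeeping rather than any deep idea: one must track transposes through the substitution, keep the learning-rate and sign factors separate from the matrix degrees, and argue carefully that the trace factors, being scalars, let the several expansions multiply independently, so that the term count is exactly the product $2^{d_0}\cdots 2^{d_m}$ and not something larger.
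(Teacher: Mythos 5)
Your proposal is correct and is essentially the paper's own argument: substitute the update rule into every weight factor, distribute over the $2^{d_i}$ binary choices per term, obtain exactly $\prod_{i=0}^m 2^{d_i}=d$ summands each of total degree at most $\sum_i d_i \le 2^{\sum_i d_i}=d$. If anything, your write-up is more careful than the paper's (which contains minor slips such as claiming ``$2^d$ terms'' and ``$\deg(M_{ik})\le 2^{d_i}$'' where $d$ and $d_i$ are meant), and you correctly resolve the time-index mismatch in the lemma's statement in the way it is actually used in the proof of Theorem \ref{thm:representation}.
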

        
    With the help of Lemmas \ref{lemma:GtoW} and \ref{lemma:WtoG}, we can represent $g_{t,i}^b, i=1,2$ using multiplicative terms of $g_{t-1,i}^b, i=1,2$ and some other constant matrices. Furthermore, by iteratively reducing the value of $t$, we are able to represent $g_{t,i}^b, i=1,2$ by the variables in $t = 0$. Theorem \ref{thm:representation} precisely gives the representation in the form of a polynomial of $\frac{1}{b}$ and the coefficients as the sum of multiplicative terms of parameter matrices $\sth{W_{0,1}^b, W_{0,2}^b}$ and constant matrices $\sth{W_1^*, W_2^*}$.

    \begin{theorem} \label{thm:representation}
        Given $t \ge 0$, for any multiplicative terms $M_i, i \in \fromto{0}{m} $ of parameter matrices $\left\{ g_{t,1}^b, g_{t,2}^b \right\}$ and constant matrices $\sth{W_{t,1}^b, W_{t,2}^b, W_1^*, W_2^*}$ with degree $d_i$, respectively, we denote $M = \prod_{i=1}^m \trace{M_i} M_0$, $d = \sum_{i=0}^m d_i$ and $d' = \sum_{i=0}^m \pth{\deg\pth{W_{t,1}^b; M_i} + \deg(W_{t,2}^b; M_i)}$. There exists a set of multiplicative terms $\sth{M_{ij}^k, i\in [m_k], j \in \fromto{0}{m_{ki}}, k \in \fromto{0}{q}}$ of parameter matrices $\sth{W_{0,1}^b, W_{0,2}^b}$ and constant matrices $\sth{W_1^*, W_2^*}$ such that \begin{align*}
                \condexp{}{M}{\calF_0} = N_0 + N_1 \frac{1}{b} + \cdots + N_q \frac{1}{b^q},
            \end{align*}
        where $N_k = \sum_{i=1}^{m_k} \prod_{j=1}^{m_{ki}} \trace{M_{ij}^k} M_{i0}^k, k\in \fromto{0}{q}  $. Here $m_k, m_{ki}$ and $q \le \frac{1}{2}(3^{t+1}-1)d + \frac{1}{2}(3^{t}-1)d'$ are constants independent of $b$, and $\sum_{j=0}^{m_{ki}} \deg\pth{M_{ij}^k} \le 3^{t} (3d + d')$.
    \end{theorem}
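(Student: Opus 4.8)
The plan is to induct on $t$, peeling off one SGD step at a time by alternating Lemma \ref{lemma:GtoW} (to integrate out the newest mini-batch) with Lemma \ref{lemma:WtoG} (to expand the weights via the update rule) and invoking the tower property $\condexp{}{M}{\calF_0} = \condexp{}{\condexp{}{M}{\calF_t^b}}{\calF_0}$. A key point, stressed in Section \ref{sec:1}, is that the induction cannot be run on the variance alone: both lemmas inevitably produce products of several traces, so the statement must be carried at the full generality of $M = \prod_{i=1}^m \trace{M_i} M_0$. For the base case $t=0$ we have $\calF_0^b = \calF_0 = \sth{w_0}$, and a single application of Lemma \ref{lemma:GtoW} already gives the claim: $\condexp{}{M}{\calF_0}$ is a polynomial in $1/b$ of degree at most $d$ whose coefficients are sums of products of traces of multiplicative terms of $\sth{W_{0,1}^b, W_{0,2}^b}$ and $\sth{W_1^*, W_2^*}$ of total degree at most $3d+d'$; these agree with the stated bounds since $\frac12(3^1-1)d + \frac12(3^0-1)d' = d$ and $3^0(3d+d') = 3d+d'$.

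For the inductive step, assume the result holds at $t-1$. Given $M$ at time $t$, first apply Lemma \ref{lemma:GtoW} to write $\condexp{}{M}{\calF_t^b} = \sum_{k=0}^{d} N_k'\, b^{-k}$, where each coefficient $N_k'$ is a sum of products of traces of multiplicative terms of the parameter matrices $\sth{W_{t,1}^b, W_{t,2}^b}$ and constants $\sth{W_1^*, W_2^*}$, of total degree at most $3d+d'$. By the tower property, $\condexp{}{M}{\calF_0} = \sum_{k=0}^{d} b^{-k}\, \condexp{}{N_k'}{\calF_0}$. Now substitute the update $W_{t,i}^b = W_{t-1,i}^b - \alpha_{t-1} g_{t-1,i}^b$ via Lemma \ref{lemma:WtoG} (applied term by term, using linearity): this rewrites each $N_k'$ as a finite, $b$-independent sum of products of traces of multiplicative terms of the parameter matrices $\sth{g_{t-1,1}^b, g_{t-1,2}^b}$ and constants $\sth{W_{t-1,1}^b, W_{t-1,2}^b, W_1^*, W_2^*}$. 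Since each factor $W_{t,i}^b$ becomes either $W_{t-1,i}^b$ or $g_{t-1,i}^b$, in every resulting term the $g_{t-1}$-degree plus the $W_{t-1}$-degree is at most $3d+d'$. Each such term is exactly of the form to which the induction hypothesis at $t-1$ applies; invoking it and reassembling the $b^{-k}$ prefactors produces a polynomial in $1/b$ with coefficients of the required form in $\sth{W_{0,1}^b, W_{0,2}^b}$ and $\sth{W_1^*, W_2^*}$.

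It remains to propagate the two quantitative bounds, which is where the bookkeeping lives. Let $\tilde d$ and $\tilde d'$ denote the $g_{t-1}$- and $W_{t-1}$-degrees of a term handed to the hypothesis, so $\tilde d + \tilde d' \le 3d+d'$. For the coefficient degree, the hypothesis caps it at $3^{t-1}(3\tilde d + \tilde d') \le 3^{t-1}\cdot 3(\tilde d + \tilde d') \le 3^{t}(3d+d')$, as required. For the power of $1/b$, the exponent is $k \le d$ (from Lemma \ref{lemma:GtoW}) plus the exponent $\frac12(3^{t}-1)\tilde d + \frac12(3^{t-1}-1)\tilde d'$ from the hypothesis; since the coefficient of $\tilde d$ dominates, maximizing over $\tilde d + \tilde d' \le 3d+d'$ sets $\tilde d = 3d+d'$, $\tilde d'=0$ and yields total exponent at most $d + \frac12(3^t-1)(3d+d') = \frac12(3^{t+1}-1)d + \frac12(3^t-1)d'$, matching $q$.

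The main obstacle is twofold and conceptual rather than computational. First, one must realize that the theorem has to be proved in the general trace-product form: inducting directly on $\condexp{}{\cdot}{\calF_0}$ of the variance fails because Lemmas \ref{lemma:GtoW} and \ref{lemma:WtoG} unavoidably spawn products of several traces and extra weight factors, so the hypothesis must already cover those. Second, the degree accounting must be done carefully enough that the $g$-degree recursion $\tilde d + \tilde d' \le 3d + d'$ and the $1/b$-power recursion collapse to the clean geometric factors $3^t$ and $\frac12(3^{t+1}-1)$ rather than a weaker (e.g.\ doubly-exponential) bound; the matching of Lemma \ref{lemma:WtoG}'s output to the exact parameter/constant classes required by the hypothesis at $t-1$ is the delicate step that makes everything line up.
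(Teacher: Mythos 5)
Your proposal is correct and follows essentially the same route as the paper's proof: induction on $t$, applying Lemma \ref{lemma:GtoW} to integrate out the current mini-batch, the tower property, Lemma \ref{lemma:WtoG} to substitute the SGD update, and then the induction hypothesis, with the same degree and exponent recursions. Your bookkeeping of the $1/b$-exponent (maximizing $d + \frac12(3^t-1)\tilde d + \frac12(3^{t-1}-1)\tilde d'$ over $\tilde d + \tilde d' \le 3d+d'$) is in fact stated more transparently than the paper's corresponding step, but it is the same argument.
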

    
    By changing the role of parameter and constant matrices we obtain the following corollary. 
    
    \begin{corollary} \label{thm:representationoCorollary}
        Given $t \ge 0$, for any multiplicative terms $M_i, i \in \fromto{0}{m} $ of parameter matrices $\sth{W_{t,1}^b, W_{t,2}^b, \calW_t^b}$ and constant matrices $\sth{W_1^*, W_2^*}$  such that $\sum_{i=1}^2 \deg \pth{W_{t,i}^b; M} = d$ and $\deg \pth{\calW_t^b; M} = d'$, we denote $M = \prod_{i=1}^m \trace{M_i} M_0$. There exists a set of multiplicative terms $\sth{M_{ij}^k, i\in [m_k], j \in \fromto{0}{m_{ki}}, k \in \fromto{0}{q}}$ of parameter matrices $\sth{W_{0,1}^b, W_{0,2}^b}$ and constant matrices $\sth{W_1^*, W_2^*}$ such that \begin{align*}
                \condexp{}{M}{\calF_0} = N_0 + N_1 \frac{1}{b} + \cdots + N_q \frac{1}{b^q},
            \end{align*}
        where $N_k = \sum_{i=1}^{m_k} \prod_{j=1}^{m_{ki}} \trace{M_{ij}^k} M_{i0}^k, k\in \fromto{0}{q}  $. Here $m_k, m_{ki}$ and $q \le 3^{t}\pth{d + 2d'}$ are constants independent of $b$, and $\sum_{j=0}^{m_{ki}} \deg\pth{M_{ij}^k} \le 3^{t} \pth{d + 2d'}$.
    \end{corollary}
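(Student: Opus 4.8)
The plan is to derive this corollary directly from Theorem~\ref{thm:representation} rather than redoing the induction, exploiting the observation that the only ``new'' object appearing here is the gap matrix $\calW_t^b = W_{t,2}^b W_{t,1}^b - W_2^* W_1^*$, which is itself a difference of two multiplicative terms built from the student weights $\sth{W_{t,1}^b, W_{t,2}^b}$ and the teacher weights $\sth{W_1^*, W_2^*}$. The key conceptual move is that Theorem~\ref{thm:representation} already handles arbitrary multiplicative terms whose constant matrices include the student weights $W_{t,1}^b, W_{t,2}^b$; what changes in the corollary is merely that the student weights now play the role of parameter matrices and that no gradient factors $g_{t,i}^b$ occur at all. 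Since Definition~\ref{def:mul} permits a multiplicative term to have degree zero in its parameter matrices, a term containing only $\sth{W_{t,1}^b, W_{t,2}^b, W_1^*, W_2^*}$ is a legitimate input to Theorem~\ref{thm:representation} with gradient-degree $0$. This is precisely the ``changing the role of parameter and constant matrices'' alluded to in the statement.

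Concretely, I would first eliminate every occurrence of $\calW_t^b$ (and its transpose) in $M = \prod_{i=1}^m \trace{M_i} M_0$ by substituting $\calW_t^b = W_{t,2}^b W_{t,1}^b - W_2^* W_1^*$ and $(\calW_t^b)^\transp = (W_{t,1}^b)^\transp (W_{t,2}^b)^\transp - (W_1^*)^\transp (W_2^*)^\transp$, distributing each difference through the surrounding products and through each trace by linearity. Since $d' = \deg(\calW_t^b; M)$ is fixed, this expansion produces at most $2^{d'}$ summands, a number independent of $b$, and each summand has the form $\prod_i \trace{\widetilde M_i}\, \widetilde M_0$, where every $\widetilde M_i$ is a multiplicative term of parameter matrices $\sth{W_{t,1}^b, W_{t,2}^b}$ and constant matrices $\sth{W_1^*, W_2^*}$ containing no gradient factors.

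I would then track degrees through the substitution. Each of the $d'$ factors $\calW_t^b$ contributes either two student weights (from the $W_{t,2}^b W_{t,1}^b$ branch) or none (from the $W_2^* W_1^*$ branch), so every summand has total student-weight degree at most $d + 2d'$, with $d = \sum_{i=1}^2 \deg(W_{t,i}^b; M)$ the contribution already present. Viewing each summand as an input to Theorem~\ref{thm:representation} with gradient-degree $0$ and student-weight degree bounded by $d+2d'$, the theorem expresses its conditional expectation given $\calF_0$ as a polynomial in $1/b$ of degree at most $\frac{1}{2}(3^{t+1}-1)\cdot 0 + \frac{1}{2}(3^{t}-1)(d+2d') = \frac{1}{2}(3^{t}-1)(d+2d')$, with each coefficient a sum of products of traces of multiplicative terms in $\sth{W_{0,1}^b, W_{0,2}^b}$ and $\sth{W_1^*, W_2^*}$ of total degree at most $3^{t}(3\cdot 0 + (d+2d')) = 3^{t}(d+2d')$. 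By linearity of expectation, $\condexp{}{M}{\calF_0}$ is the sum of these finitely many polynomials; summing preserves both the ``polynomial in $1/b$'' form and the stated structure of the coefficients, and since $\frac{1}{2}(3^{t}-1) \le 3^{t}$ the claimed bounds $q \le 3^{t}(d+2d')$ and $\sum_{j} \deg(M_{ij}^k) \le 3^{t}(d+2d')$ both follow.

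The bulk of the work, and the only place demanding care, is the bookkeeping in the last two steps: one must verify that distributing $\calW_t^b$ through nested products inside traces does yield multiplicative terms in the sense of Definition~\ref{def:mul} (transposition reverses the factor order but keeps every factor inside $\widebar{\calX} \cup \widebar{\calC}$), that the number of summands as well as the constants $m_k, m_{ki}$ stay independent of $b$, and that the worst-case count $d + 2d'$ is the correct bound across all branches. I do not expect any genuinely new idea beyond Theorem~\ref{thm:representation}; the corollary is essentially a re-indexing in which the gap matrix is unfolded and the student weights are reclassified as parameters, so the main obstacle is purely combinatorial rather than conceptual.
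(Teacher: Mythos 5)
Your proposal is correct and follows essentially the same route as the paper: the paper's proof likewise expands each occurrence of $\calW_t^b$ via $\calW_t^b = W_{t,2}^b W_{t,1}^b - W_2^* W_1^*$ into a $b$-independent number of multiplicative terms free of gradient factors and then reruns the machinery of Theorem~\ref{thm:representation} (via Lemmas~\ref{lemma:GtoW} and~\ref{lemma:WtoG}) on each summand. Your version simply invokes Theorem~\ref{thm:representation} as a black box with gradient degree zero and makes the degree bookkeeping more explicit than the paper's one-line argument, which is a harmless (indeed clarifying) variation.
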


    As a special case of Theorem \ref{thm:representation}, Theorem \ref{thm:varianceRepresentation} shows that the variance of the stochastic gradient estimators is also a polynomial of $\frac{1}{b}$ but with no constant term. This backs the important intuition that the variance is approximately inversely proportional to the mini-batch size $b$. Besides, note that if we consider $b \goto \infty$, intuitively we should have $\condvar{}{g_{t,i}^b}{\calF_0} \goto 0, i=1,2$. This observation aligns with the statement of Theorem \ref{thm:varianceRepresentation}.
    
    \begin{theorem} \label{thm:varianceRepresentation}
        Given $t \ge 0$, value $\condvar{}{g_{t,i}^b}{\calF_0}, i=1,2$ can be written as a polynomial of $\frac{1}{b}$ with degree at most $2\cdot 3^t$ with no constant term. Formally, we have \begin{equation}
            \condvar{}{g_{t,i}^b}{\calF_0} = \beta_1 \frac{1}{b} + \cdots + \beta_r \frac{1}{b^r}, \label{eq:2.7}
        \end{equation}
        where $r \le 2\cdot 3^{t+1}$ and each $\beta_i$ is a constant independent of $b$.
    \end{theorem}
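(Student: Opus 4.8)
The plan is to express the variance through traces that Theorem~\ref{thm:representation} can process directly, read off the polynomial structure from that theorem, and then argue separately that the constant coefficient vanishes by identifying it with the $b\to\infty$ limit of the variance. First I would rewrite the variance in trace form. For any matrix $M$ one has $\norm{\text{vec}(M)}^2 = \trace{M^\transp M}$, hence
\[
\condvar{}{g_{t,i}^b}{\calF_0} = \condexp{}{\trace{{g_{t,i}^b}^\transp g_{t,i}^b}}{\calF_0} - \trace{\pth{\condexp{}{g_{t,i}^b}{\calF_0}}^\transp \condexp{}{g_{t,i}^b}{\calF_0}}.
\]
Both terms are now amenable to Theorem~\ref{thm:representation}, treating $g_{t,1}^b, g_{t,2}^b$ as the parameter matrices.

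For the first term I apply the theorem to the single multiplicative term ${g_{t,i}^b}^\transp g_{t,i}^b$ (degree $d=2$, with no $W_{t,\cdot}^b$ factors so $d'=0$) and take the trace of the resulting matrix identity, obtaining a polynomial $\sum_k \trace{N_k}\, b^{-k}$ in $1/b$. For the second term I apply the theorem to $g_{t,i}^b$ itself (degree $d=1$), write $\condexp{}{g_{t,i}^b}{\calF_0} = \sum_k P_k\, b^{-k}$, and expand $\trace{(\sum_k P_k b^{-k})^\transp(\sum_l P_l b^{-l})} = \sum_{k,l}\trace{P_k^\transp P_l}\, b^{-(k+l)}$. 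The two pieces are polynomials in $1/b$, so their difference is as well, which gives \eqref{eq:2.7}. The degree bound follows from the $q$-bound in Theorem~\ref{thm:representation}: the first term has degree at most $\tfrac12(3^{t+1}-1)\cdot 2 = 3^{t+1}-1$, and the second, being the square of a polynomial of degree at most $\tfrac12(3^{t+1}-1)$, also has degree at most $3^{t+1}-1$; hence $r \le 3^{t+1}-1 \le 2\cdot 3^{t+1}$.

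It remains to show there is no constant term, i.e. $\beta_0 = 0$. Because the variance is a polynomial in $1/b$, its constant coefficient equals $\lim_{b\to\infty}\condvar{}{g_{t,i}^b}{\calF_0}$, so it suffices to prove this limit is $0$. I would argue this by coupling the stochastic trajectory to the deterministic one $\bar W_{t,i}$ obtained by replacing each empirical second moment $\tfrac1b\sum_{j=1}^b x_{t,j}x_{t,j}^\transp$ by its expectation $I_p$ (equivalently, population-gradient descent from $W_{0,i}$). Since $\calF_0$ only fixes the initial weights and the $x_{t,j}$ are i.i.d. $\calN(0,I_p)$, the law of large numbers gives $\tfrac1b\sum_j x_{t,j}x_{t,j}^\transp \to I_p$ in $L^2$; as each $W_{t,i}^b$ and $g_{t,i}^b$ is a fixed polynomial in these empirical moments and the weights, an induction on $t$ yields $W_{t,i}^b \to \bar W_{t,i}$ and $g_{t,i}^b \to \bar g_{t,i}$ in $L^2$, with $\bar g_{t,i}$ deterministic. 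Finiteness of all Gaussian polynomial moments then gives $\condexp{}{\norm{g_{t,i}^b}^2}{\calF_0} \to \norm{\bar g_{t,i}}^2$ and $\condexp{}{g_{t,i}^b}{\calF_0} \to \bar g_{t,i}$, so the variance tends to $0$.

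The main obstacle is precisely this last step. Making the $b\to\infty$ limit rigorous requires controlling the coupled recursion in $L^2$ and interchanging the limit with the conditional expectation through the $3^t$-fold recursive expansion underlying Theorem~\ref{thm:representation}. Equivalently, one may carry out the purely algebraic version: track the $1/b^0$ coefficient $N_0$ through Lemma~\ref{lemma:GtoW} and the recursion, and verify that the constant term of $\condexp{}{{g_{t,i}^b}^\transp g_{t,i}^b}{\calF_0}$ equals the trace of the product of the constant term of $\condexp{}{g_{t,i}^b}{\calF_0}$ with itself. This matching is exactly what forces $\beta_0 = 0$, and its bookkeeping across iterations is the delicate part of the argument.
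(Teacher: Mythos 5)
Your proposal is correct in outline but establishes the key point --- the vanishing constant term --- by a genuinely different route than the paper. The paper never takes a $b\to\infty$ limit: it exploits the fact that $g_{t,1}^b=\frac1b\sum_{i=1}^b {W_{t,2}^b}^\transp\calW_t^b x_{t,i}x_{t,i}^\transp$ is an average over the $b$ fresh samples of step $t$ to reduce the variance to $\frac1b\pth{(p+2)\trace{\condexp{}{{\calW_t^b}^\transp W_{t,2}^b{W_{t,2}^b}^\transp\calW_t^b}{\calF_0}}-\norm{\condexp{}{{W_{t,2}^b}^\transp\calW_t^b}{\calF_0}}^2}$, and then applies Corollary \ref{thm:representationoCorollary} to the two inner quantities; the explicit prefactor $1/b$ kills the constant term for free and produces the coefficients explicitly. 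You instead obtain the polynomial form directly from Theorem \ref{thm:representation} (with the slightly sharper degree bound $3^{t+1}-1$) and identify $\beta_0$ with $\lim_{b\to\infty}\condvar{}{g_{t,i}^b}{\calF_0}$, which you drive to zero by a law-of-large-numbers coupling to the population-gradient trajectory. That identification is legitimate once the polynomial form is in hand, and the LLN step is completable since the whole trajectory is a fixed polynomial in the empirical second-moment matrices whose $L^q$ norms are bounded uniformly in $b$; but, as you acknowledge, that is where all the remaining work lives, and it grafts an analytic limiting argument onto an otherwise purely algebraic development. The trade-off is real in both directions. Your route avoids a delicate point in the paper's opening identity $\condvar{}{\frac1b\sum_iY_i}{\calF_0}=\frac1{b^2}\sum_i\condvar{}{Y_i}{\calF_0}$, which tacitly requires the summands to be conditionally uncorrelated given $\calF_0$ --- for $t\ge1$ they share the random matrices $W_{t,2}^b$ and $\calW_t^b$, and their pairwise conditional covariance equals $\condvar{}{{W_{t,2}^b}^\transp\calW_t^b}{\calF_0}$, so that decomposition needs repair. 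The paper's route, once that is fixed, stays entirely algebraic, yields explicit coefficient formulas, and never interchanges a limit with the $3^t$-fold recursive expansion. Either your analytic limit or your proposed alternative of tracking the $N_0$ coefficient through Lemma \ref{lemma:GtoW} would close the argument, but you should treat one of them in full rather than leaving both as sketches.
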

    
    Finally, to show the that the variance is a decreasing function of $b$ for large enough $b$, we only need to show that the leading coefficient $\beta_1$ is non-negative. This is guaranteed by the fact that variance is always non-negative. We therefore have Theorem \ref{thm:varianceDecrease}.
    
    \begin{theorem} \label{thm:varianceDecrease}
        Given $t \in \mathbb{N}$, there exists a constant $b_0$ such that for all $b \ge b_0 $ function $\condvar{}{g_{t,i}^b}{\calF_0}, i=1,2$ is a decreasing function of $b$.
    \end{theorem}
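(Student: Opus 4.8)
The plan is to build directly on the polynomial representation supplied by Theorem \ref{thm:varianceRepresentation}, which asserts that for each fixed $t$ and each $i \in \{1,2\}$ we may write
\[
\condvar{}{g_{t,i}^b}{\calF_0} = \frac{\beta_1}{b} + \frac{\beta_2}{b^2} + \cdots + \frac{\beta_r}{b^r},
\]
with every $\beta_j$ independent of $b$ and with no constant term. Abbreviating this quantity by $f(b)$, the whole theorem reduces to a statement about the univariate function $f$: I want to produce a threshold $b_0$ so that $f'(b) < 0$ for all real $b \ge b_0$, which in particular forces $f$ to be decreasing on the integers $b \ge b_0$. If every $\beta_j$ vanishes then $f \equiv 0$ is (weakly) decreasing and there is nothing to prove, so I assume otherwise and let $\beta_k$ denote the \emph{first} nonzero coefficient, i.e. $\beta_1 = \cdots = \beta_{k-1} = 0$ and $\beta_k \neq 0$.

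First I would pin down the sign of the leading coefficient $\beta_k$. The crucial structural input is that in the online setting the mini-batch size $b$ ranges over all positive integers, so $f(b) = \condvar{}{g_{t,i}^b}{\calF_0} \ge 0$ holds for every $b \in \mathbb{N}$. Multiplying by $b^k > 0$ and letting $b \to \infty$, each surviving term $\beta_j / b^{\,j-k}$ with $j > k$ tends to zero, whence
\[
\beta_k = \lim_{b\to\infty} b^k f(b) \ge 0 .
\]
Since $\beta_k \neq 0$ by assumption, this forces $\beta_k > 0$. This is exactly the observation flagged before the statement: non-negativity of the variance, evaluated in the limit of a diverging batch, determines the sign of the lowest-order coefficient.

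With $\beta_k > 0$ in hand, the remainder is a routine leading-order estimate of the derivative. Differentiating termwise and factoring out the dominant term gives
\[
f'(b) = -\sum_{j=k}^{r} \frac{j\,\beta_j}{b^{\,j+1}} = -\frac{k\,\beta_k}{b^{\,k+1}}\pth{1 + \sum_{j=k+1}^{r} \frac{j\,\beta_j}{k\,\beta_k}\, \frac{1}{b^{\,j-k}}}.
\]
As $b \to \infty$ the parenthetical factor converges to $1$, hence is strictly positive once $b$ exceeds some threshold $b_0$ depending only on $t$ and the fixed coefficients; combined with $k\beta_k / b^{\,k+1} > 0$, this yields $f'(b) < 0$ for all $b \ge b_0$, establishing the claim for each of $i = 1,2$.

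I do not anticipate a genuine obstacle here, since the heavy lifting—the polynomial structure and the absence of a constant term—has already been carried out in Theorem \ref{thm:varianceRepresentation}. The only point demanding care, and the reason I phrase the sign argument in terms of the first nonzero coefficient $\beta_k$ rather than $\beta_1$ alone, is the possibility that $\beta_1 = 0$; the passage to the limit $b \to \infty$ is precisely what legitimizes extracting the sign of $\beta_k$ from the pointwise non-negativity of the variance, and it is the one step where the online (unbounded $b$) nature of the setting is used essentially.
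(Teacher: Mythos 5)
Your proof is correct and follows essentially the same route as the paper: invoke the polynomial representation from Theorem \ref{thm:varianceRepresentation}, deduce the sign of the leading coefficient from non-negativity of the variance as $b \to \infty$, and then bound the derivative by its leading term. If anything, your version is slightly more careful than the paper's, which only establishes $\beta_1 \ge 0$ and then implicitly assumes the first nonzero coefficient of $\beta_1 b^{r-1} + 2\beta_2 b^{r-2} + \cdots + r\beta_r$ is positive; your explicit treatment of the first nonzero $\beta_k$ closes that small gap.
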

    
    In conclusion, we present the relationship between any multiplicative terms of parameter matrices $\sth{g_{t,i}^b, W_{t,i}^b, i=1,2}$ and constant matrices $\sth{W_1^*, W_2^*}$ and the initial weights $W_{0,1}, W_{0,2}$ and the mini-batch size $b$. Unlike the linear regression setting, the closed form expressions for the variance are unknown. However, Theorem \ref{thm:varianceRepresentation} conquers this issue by iteratively deducing $t$ one by one and it provides a polynomial representation. We are also able to show the decreasing property of the variance of stochastic gradient estimators with respect to $b$, based on this polynomial representation.

    \begin{figure*}[h!]
        \centering
        \begin{tabular}{cccc}
            \hspace{-1.5em}\includegraphics[width=0.5\textwidth]{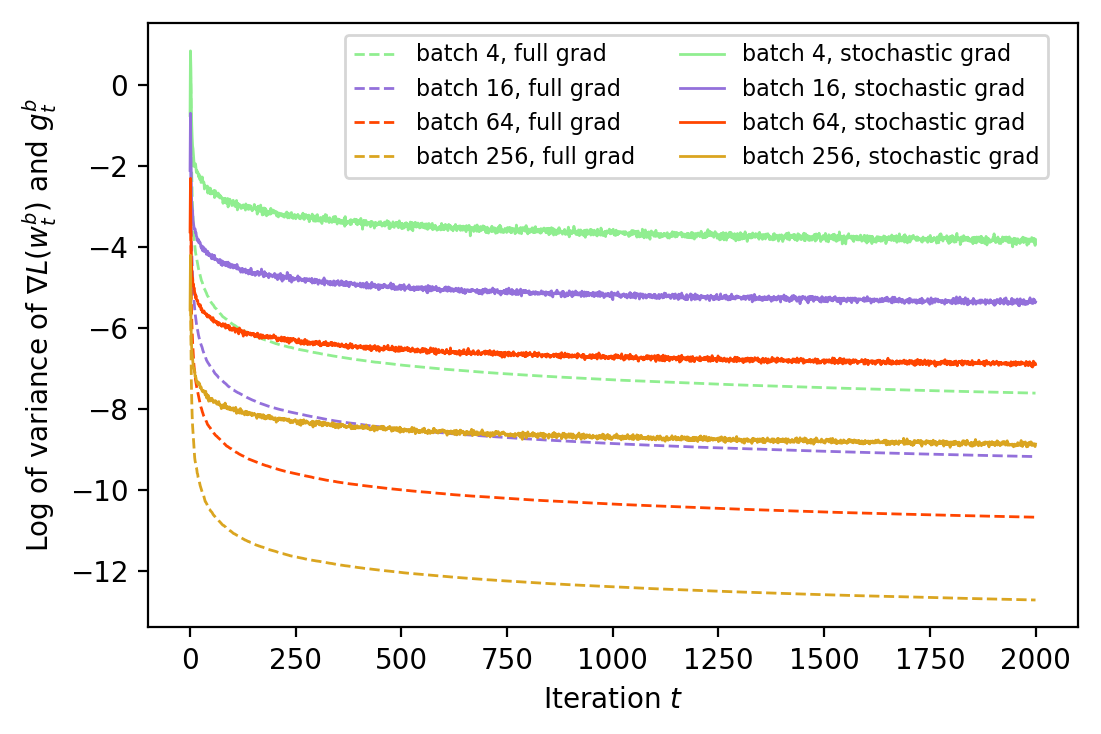}&
            \hspace{-1.5em}\includegraphics[width=0.5\textwidth]{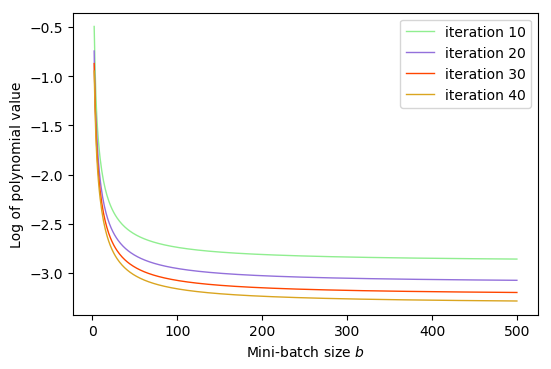}\\
            (a) Variance of stochastic gradients and full gradients &(b) Fitting polynomials of mini-batch size $b$ \\
        \end{tabular}
        % \vspace{-2mm} 
        \caption{\small{Experimental results for the Graduate Admission dataset. \textbf{Left:} $\log\pth{\condvar{}{g_{t}^b}{\calF_0}}$ and $\log\pth{\var\left(\grad L (w_t^b) \, \middle| \calF_0 \right)}$ vs iteration $t$ for 4 different mini-batch sizes. \textbf{Right:} The log of polynomial values when fitting polynomials on selected mini-batch sizes at certain iterations.}}
        \label{fig:linear}
    \end{figure*}
    
    \begin{figure*}[h!]
        \centering
        \begin{tabular}{cccc}
            \hspace{-1.5em}\includegraphics[width=0.5\textwidth]{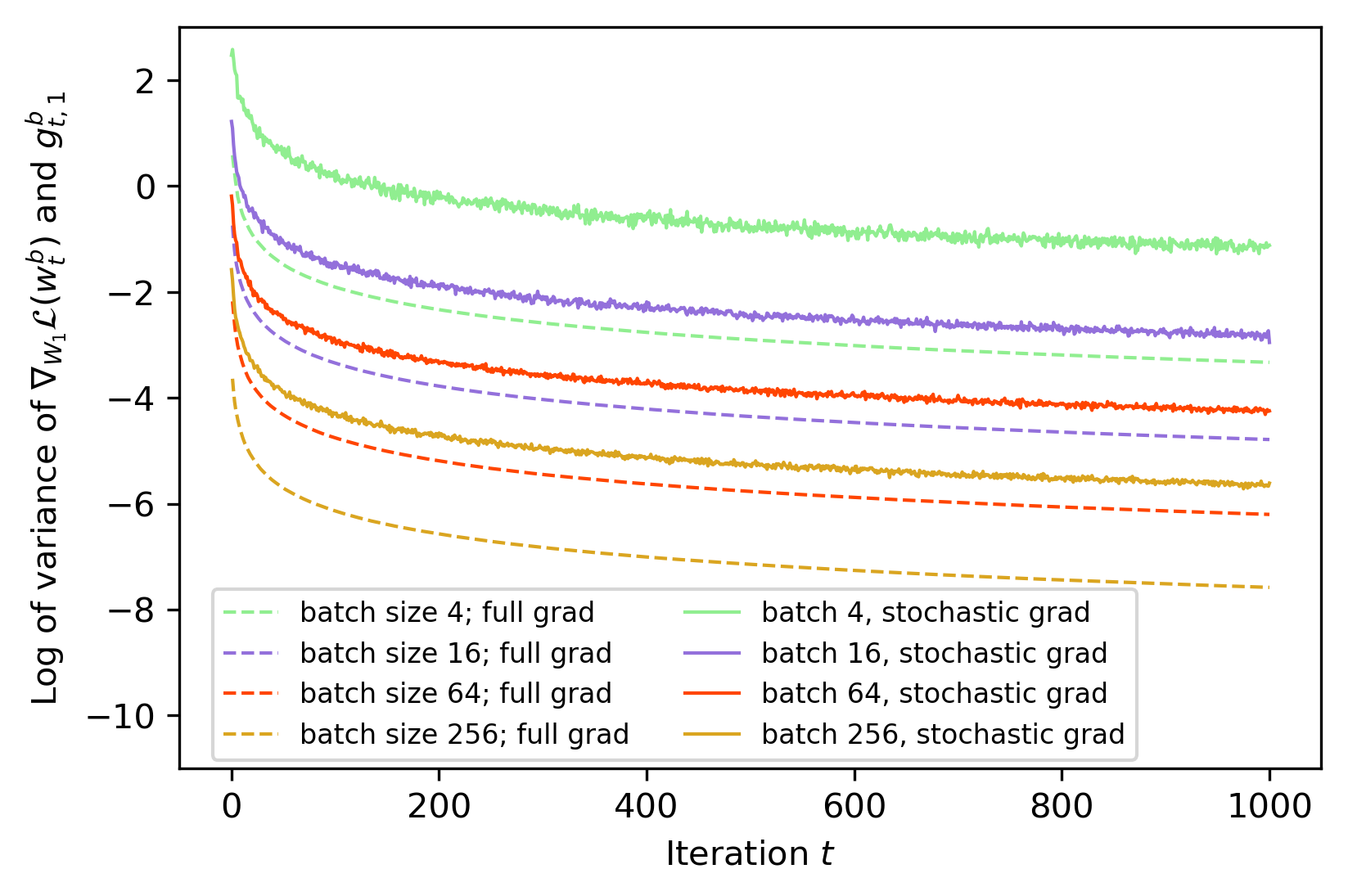}&
            \hspace{-1.5em}\includegraphics[width=0.5\textwidth]{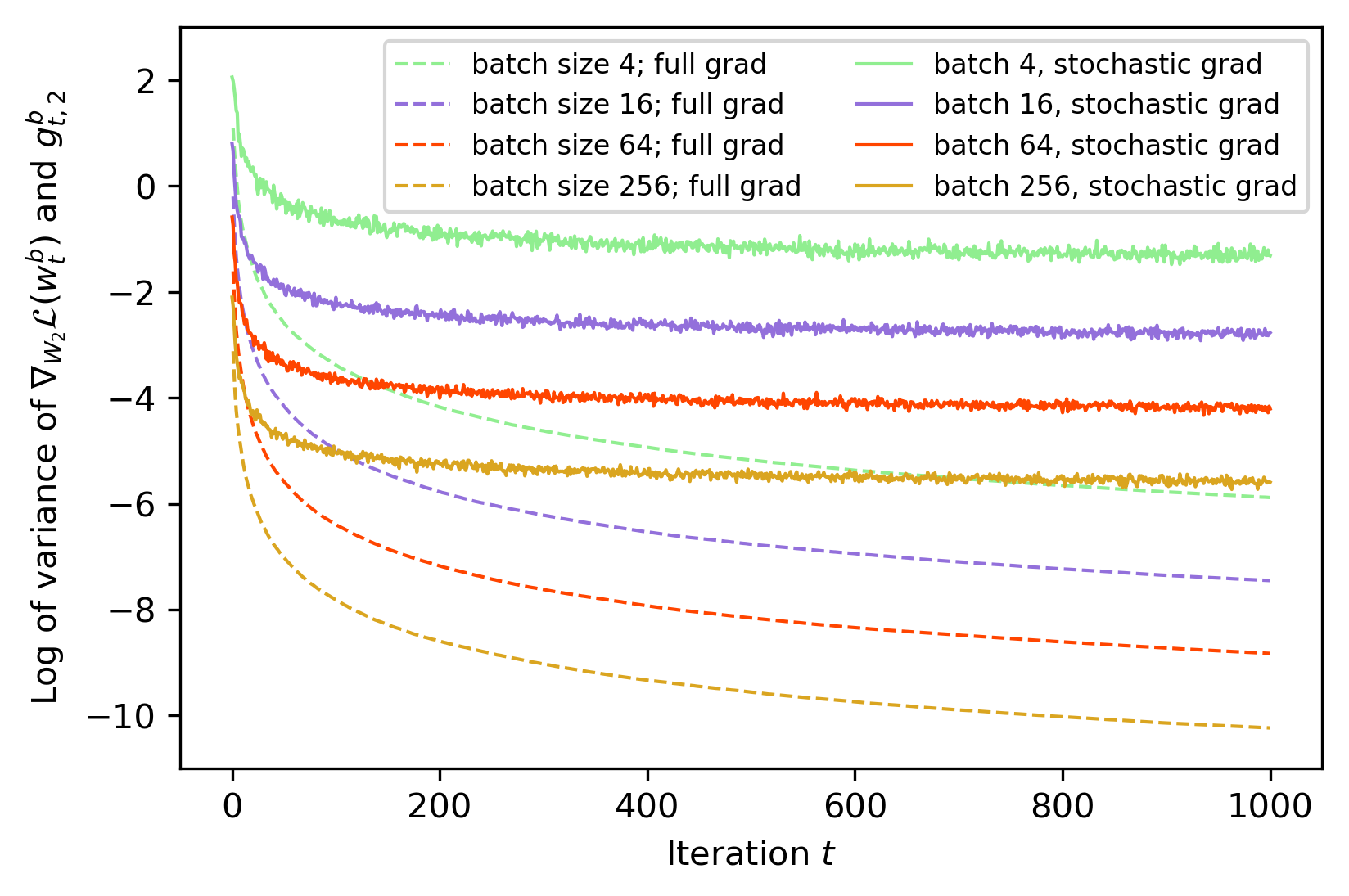}\\
            (a) Variance of gradients with respect to $W_1$ &(b) Variance of gradients with respect to $W_2$ \\
        \end{tabular}
        % \vspace{-4mm} 
        \caption{\small{Experimental results for the MNIST dataset. \textbf{Left:} $\log\pth{\condvar{}{g_{t,1}^b}{\calF_0}}$ and $\log\pth{\var\left(\grad_{W_1} \calL (W_{t,1}^b, W_{t,2}^b) \, \middle| \calF_0 \right)}$ vs iteration $t$. \textbf{Right:} $\log\pth{\condvar{}{g_{t,2}^b}{\calF_0}}$ and $\log\pth{\var\left(\grad_{W_2} \calL (W_{t,1}^b, W_{t,2}^b) \, \middle| \calF_0 \right)}$ vs iteration $t$.}}
        \label{fig:twolayer}
    \end{figure*}
    
     \begin{figure*}[h!]
        \centering
        \begin{tabular}{cccccc}
            \hspace{-1.5em}\includegraphics[width=0.5\textwidth]{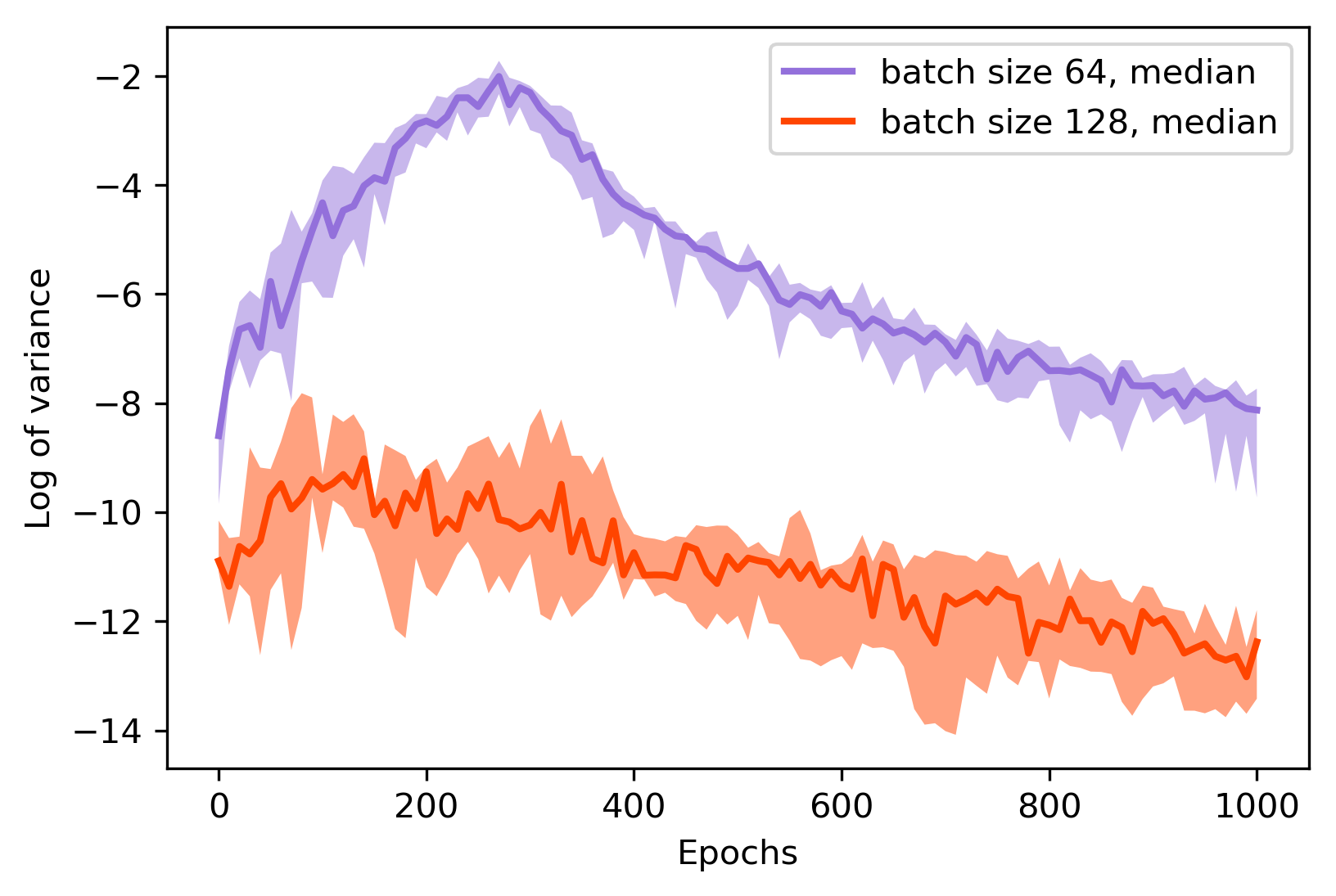}&
            %\hspace{-1.5em}\includegraphics[width=0.35\textwidth]{}&
            \hspace{-1.5em}\includegraphics[width=0.5\textwidth]{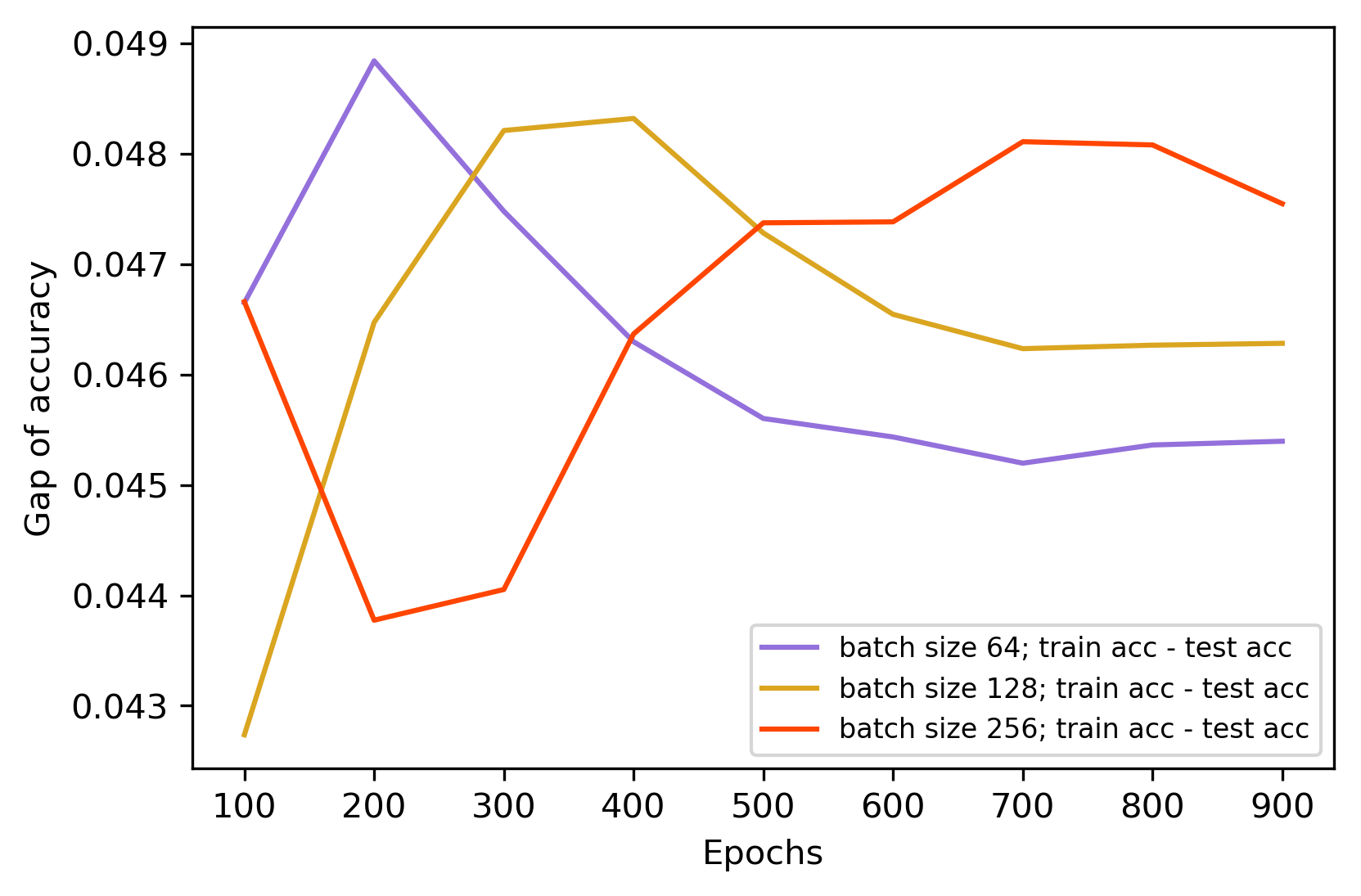}\\
            \scriptsize(a) Different initial weights 
            %&\scriptsize(b) Gap of accuracy 
            &\scriptsize (b) Gap of accuracy (zoomed-in)\\
        \end{tabular}
        % \vspace{-4mm} 
        \caption{\small{Experimental results for the MNIST dataset. \textbf{Left:} The median, min, and max of the log of variance of the stochastic gradient estimators for two different mini-batch sizes (distinguished by colors) and five different initial weights. The solid lines show the median of all five initial weights while the highlighted regions show the min and max of the log of variance. 
        %\textbf{Middle}: The gap of accuracy on training set and test set vs epochs. 
        \textbf{Right:} The gap of accuracy on training and test sets vs epochs starting from epoch 100.}}
        \label{fig:MNIST2}
    \end{figure*}

\section{Experiments} \label{sec:4}

In this section, we present numerical results to support the theorems in Section \ref{sec:3} and provide further insights into the impact of the mini-batch size on the dynamics of SGD. The experiments are conducted on four datasets and models that are relatively small due to the computational cost of using large models and datasets. The goal of these experiments is to support the theorems in Section \ref{sec:3}, to backup the hypotheses discussed in the introduction, and to provide further insights.

% For all experiments, we use mini-batch SGD to minimize the loss function \xqcomment{without any acceleration tricks?}. We fix the initial weights $w_0$ and and learning rates $\alpha_t$ for all experiments on the same dataset. On each dataset, 

For all experiments, we perform mini-batch SGD multiple times starting from the same initial weights and following the same choice of the learning rates and other hyper-parameters, if applicable. This enables us to calculate the variance of the gradient estimators and other statistics in each iteration, where the randomness comes only from different samples of SGD. The learning rate $\alpha_t$ is selected to be inversely proportional to iteration $t$, or fixed, depending on the task at hand.

All models are implemented using PyTorch version 1.4 \cite{paszke2019pytorch} and trained on NVIDIA 2080Ti/1080 GPUs. We report the details about the hyperparameters and training procedures in Appendix \ref{appendix:exp}.

% \xqcomment{Our models are relatively small and the performance might not be SOTA, as the goal of these experiments are support the property, but not to achieve SOTA performances.}

\subsection{Datasets and Settings}
The Graduate Admission dataset\footnote{\url{https://www.kaggle.com/mohansacharya/graduate-admissions}} \cite{acharya2019comparison} is to predict the chance of a graduate admission using linear regression. The dataset contains 500 samples with 6 features. This is a popular regression dataset with clean data. We build a linear regression model to predict the chance of acceptance (we include the intercept term in the model) and minimize the empirical $L_2$ loss using mini-batch SGD, as stated in Section \ref{subsec:linearRegression}. The purpose of this experiment is to empirically study the rate of decrease of the variance. The theoretical study exhibited in Section \ref{subsec:linearRegression} establishes the non-increasing property but it does not state anything about the rate of decrease. 

We build a synthetic dataset of standard normal samples to study the setting in Section \ref{subsec:linearNetwork}. We fix the teacher network with 64 input neurons, 256 hidden neurons and 128 output neurons. We optimize the population $L_2$ loss by updating the two parameter matrices of the student network using online SGD, as stated in Section \ref{subsec:linearNetwork}. In this case we have proved the functional form of the variance as a function of $b$ and show the decreasing property of the variance of the stochastic gradient estimators for large mini-batch sizes. However, we do not show the decreasing property for every $b$. With this experiment we confirm that the conjecture likely holds.

The MNIST dataset is to recognize digits in handwritten images of digits. We use all 60,000 training samples and 10,000 validation samples of MNIST. We build a three-layer fully connected neural network with 1024, 512 and 10 neurons in each layer. For the two hidden layers, we use the ReLU activation function. The last layer is the softmax layer which gives the prediction probabilities for the 10 digits. We use mini-batch SGD to optimize the cross-entropy loss of the model. The model deviates from our analytical setting since it has non-linear activations, it has the cross-entropy loss function (instead of $L_2$), and empirical loss (as opposed to population). MNIST is selected due to its fast training and popularity in deep learning experiments. The goal is to verify the results in this different setting and to back up our hypotheses. 

The Yelp Review dataset from the Yelp Dataset Challenge 2015 \cite{zhang2015character} contains 1,569,264 samples of customer reviews with positive/negative sentiment labels. We use 10,000 samples as our training set and 1,000 samples as the validation set. We use XLNet \cite{yang2019xlnet} to perform sentiment classification on this dataset. Our XLNet has 6 layers, the hidden size of 384, and 12 attention heads. There are in total 35,493,122 parameters. We intentionally reduce the number of layers and hidden size of XLNet and select a relatively small size of the training and validation sets since training of XLNet is very time-consuming (\cite{yang2019xlnet} train on 512 TPU v3 chips for 5.5 days) and we need to train the model for multiple runs. This setting allows us to train our model in several hours on a single GPU card. We train the model using the Adam weight decay optimizer, and some other techniques, as suggested in Table 8 of \cite{yang2019xlnet}. This dataset represents sequential data where we further consider the hypotheses. 

\subsection{Discussion} \label{sec:4.2}
As observed in Figure~\ref{fig:linear}(a), under the linear regression setting with the Graduate Admission dataset, the variance of the stochastic gradient estimators and full gradients are all strictly decreasing functions of $b$ for all iterations. This result verifies the theorems in Section \ref{subsec:linearRegression}. Figure~\ref{fig:linear}(b) further studies the rate of decrease of the variance. From the proofs in Section \ref{subsec:linearRegression} we see that $\condvar{}{g_t^b}{\calF_0}$ is a polynomial of $\frac{1}{b}$ with degree $t+1$. Therefore, for every $t$, we can approximate this polynomial by sampling many different $b$'s and calculate the corresponding variances. We pick $b$ to cover all numbers that are either a power of 2 or multiple of 40 in $[2, 500]$ (there are a total of 21 such values) and fit a polynomial with degree 6 (an estimate from the analyses) at $t=10, 20, 30, 40$. Figure~\ref{fig:linear}(b) shows the fitted polynomials. As we observe, the value $\condvar{}{g_t^b}{\calF_0}$ (approximated by the value of the polynomial) is both decreasing with respect to the mini-batch size $b$ and iteration $t$. Further, the rate of decrease in $b$ is slower as the $b$ increasing. This provides a further insight into the dynamics of training a linear regression problem with SGD. 

Under the two-layer linear network setting with the synthetic dataset, Figure~\ref{fig:twolayer} verifies that the variance of the stochastic gradient estimators and full gradients are all strictly decreasing functions of $b$ for all iterations. This figure also empirically shows that the constant $b_0$ in Theorem \ref{thm:varianceDecrease} could be as small as $b_0 = 4$. In fact, we also experiment with the mini-batch size of 1 and 2, and the decreasing property remains to hold. We also test this on multiple choices of initial weights and learning rates and this pattern remains clear. 

% As observed in Figure~\ref{fig:traversals}, under both the linear regression and two-layer linear network settings, the variance of the stochastic gradient estimators and full gradients are all strictly decreasing functions of $b$ for all iterations (in the former case this has been theoretically established herein while in the latter case only for large enough $b$). We also test this on multiple choices of initial weights and learning rates and this pattern remains clear. 
% %As a result, Figure~\ref{fig:traversals}(a) perfectly verifies the claim of Corollary \ref{col:1}. 
% %Figure~\ref{fig:traversals}(b) and \ref{fig:traversals}(c) not only verify the claim of Theorem \ref{thm:varianceDecrease}, but also go beyond it and empirically show that the constant $b_0$ in Theorem \ref{thm:varianceDecrease} could be as small as $b_0 = 4$. In fact, we also experiment on the mini-batch size of 1 and 2, and the decreasing property also holds. 
% Figures~\ref{fig:traversals}(b) and \ref{fig:traversals}(c) also empirically show that the constant $b_0$ in Theorem \ref{thm:varianceDecrease} could be as small as $b_0 = 4$. In fact, we also experiment with the mini-batch size of 1 and 2, and the decreasing property remains to hold. 

% We also extend our experiments to the setting of more widely used deep learning models, datasets, and optimization algorithms. We empirically show that the decreasing property holds in these more realistic scenarios.

  \begin{figure*}[t]
        \centering
        \begin{tabular}{cccc}
            \hspace{-1.5em}\includegraphics[width=0.5\textwidth]{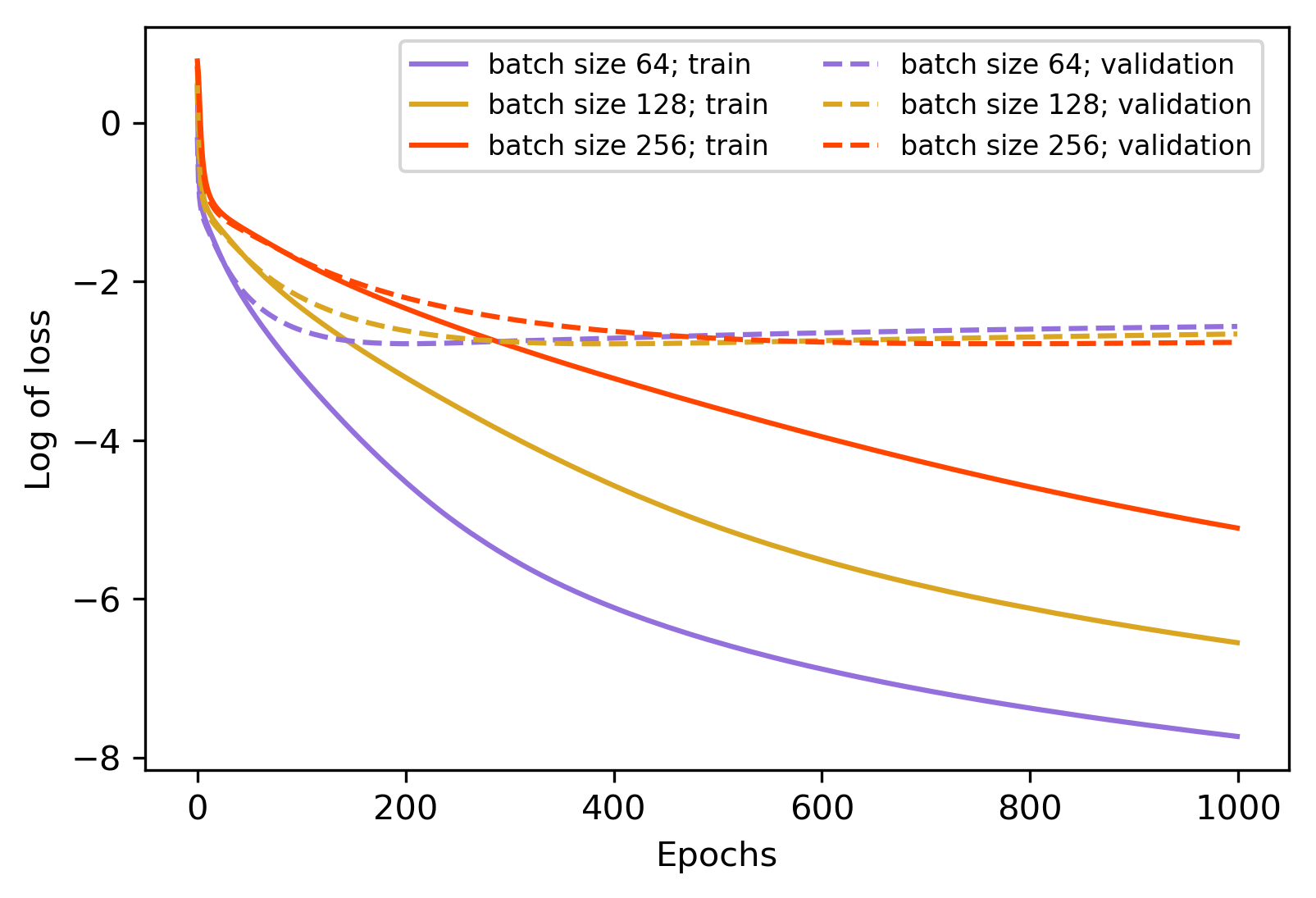}&
            \hspace{-1.5em}\includegraphics[width=0.5\textwidth]{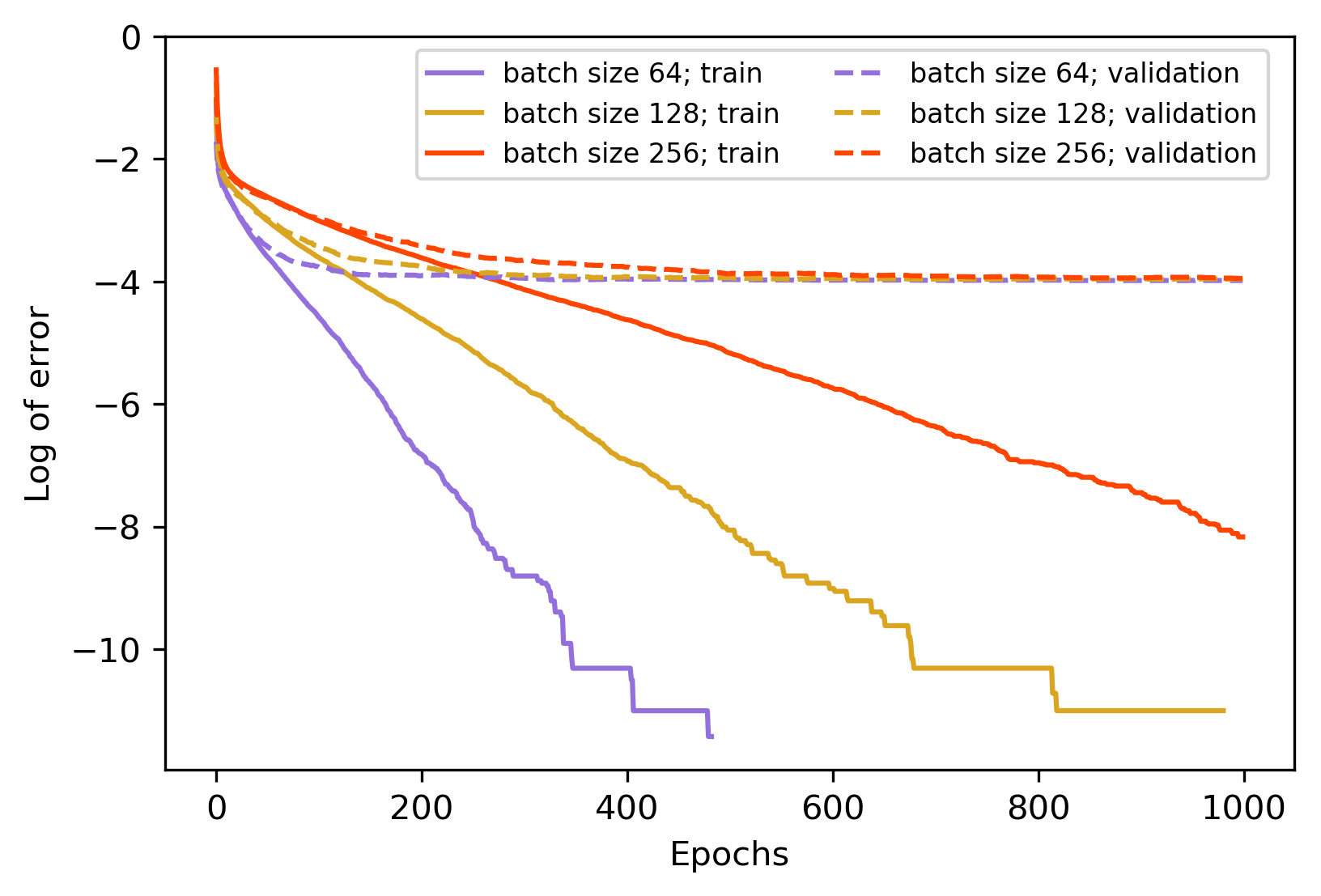}\\
            (a) Log of loss for training and validation sets &(b) Log of error for training and validation sets 
        \end{tabular}
        % \begin{tabular}{cccccc}
        %     \hspace{-1.5em}\includegraphics[width=0.35\textwidth]{figure/MNIST_initial.png}&
        %     \hspace{-1.5em}\includegraphics[width=0.35\textwidth]{figure/MNIST_loss.png}&
        %     \hspace{-1.5em}\includegraphics[width=0.355\textwidth]{figure/MNIST_acc.png}\\
        %     \scriptsize(a) Different initial weights &\scriptsize(b) Train and validation, log of loss, MNIST &\scriptsize (c) Train and validation, log of error, MNIST\\
        % \end{tabular}
        % \vspace{-4mm} 
        \caption{\small{Experimental results for the MNIST dataset. \textbf{Left:} The log of the training and validation loss vs epochs. \textbf{Right:} The log of training and validation error vs epochs. Here error is defined as one minus predicting accuracy. The plot does not show the epochs if error equals to zero.}}
        \label{fig:MNIST1}
    \end{figure*}

    \begin{figure*}[h!]
        \centering
        \begin{tabular}{cccccc}
            \hspace{-1.5em}\includegraphics[width=0.35\textwidth]{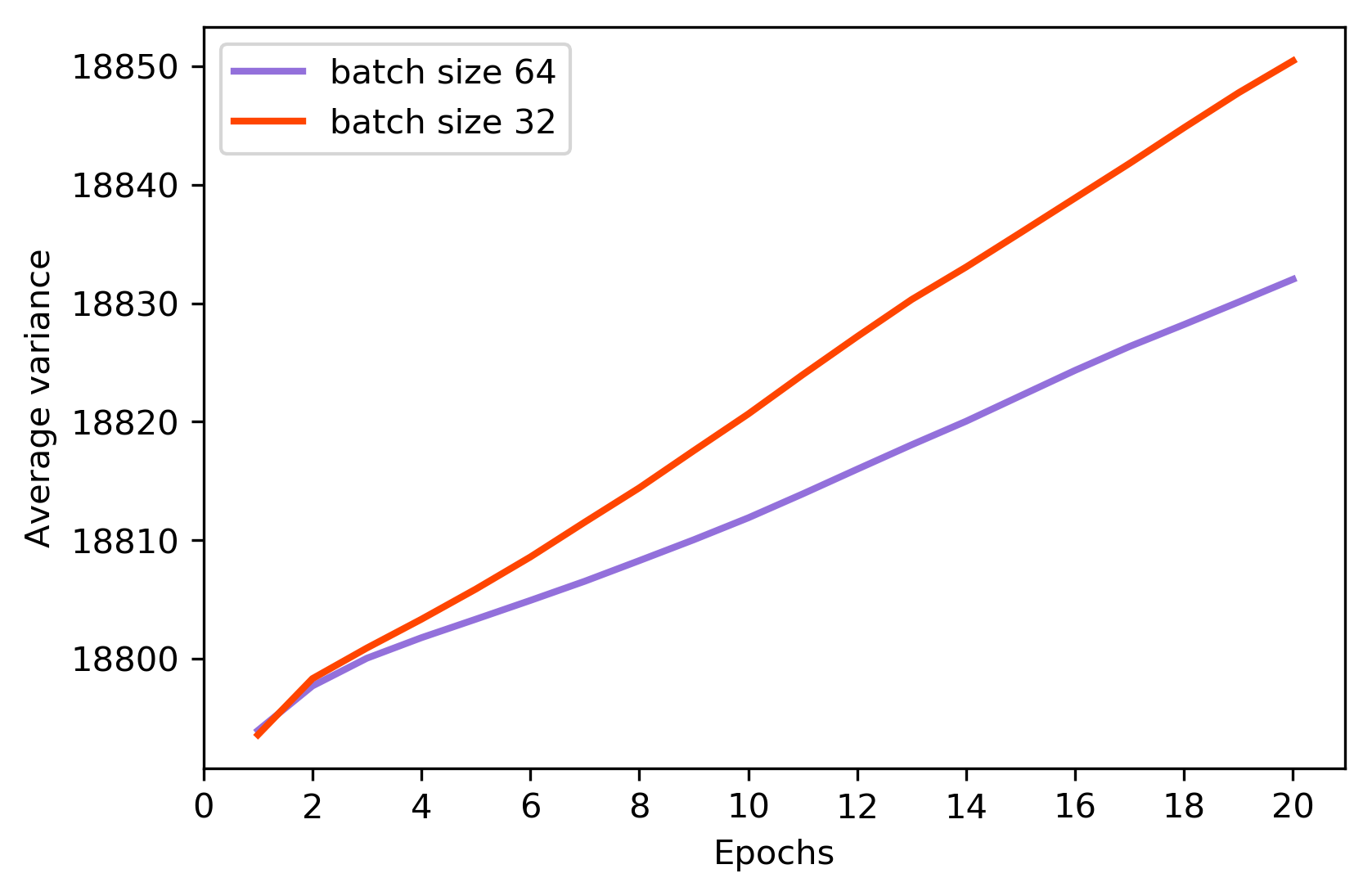}&
            \hspace{-1.5em}\includegraphics[width=0.35\textwidth]{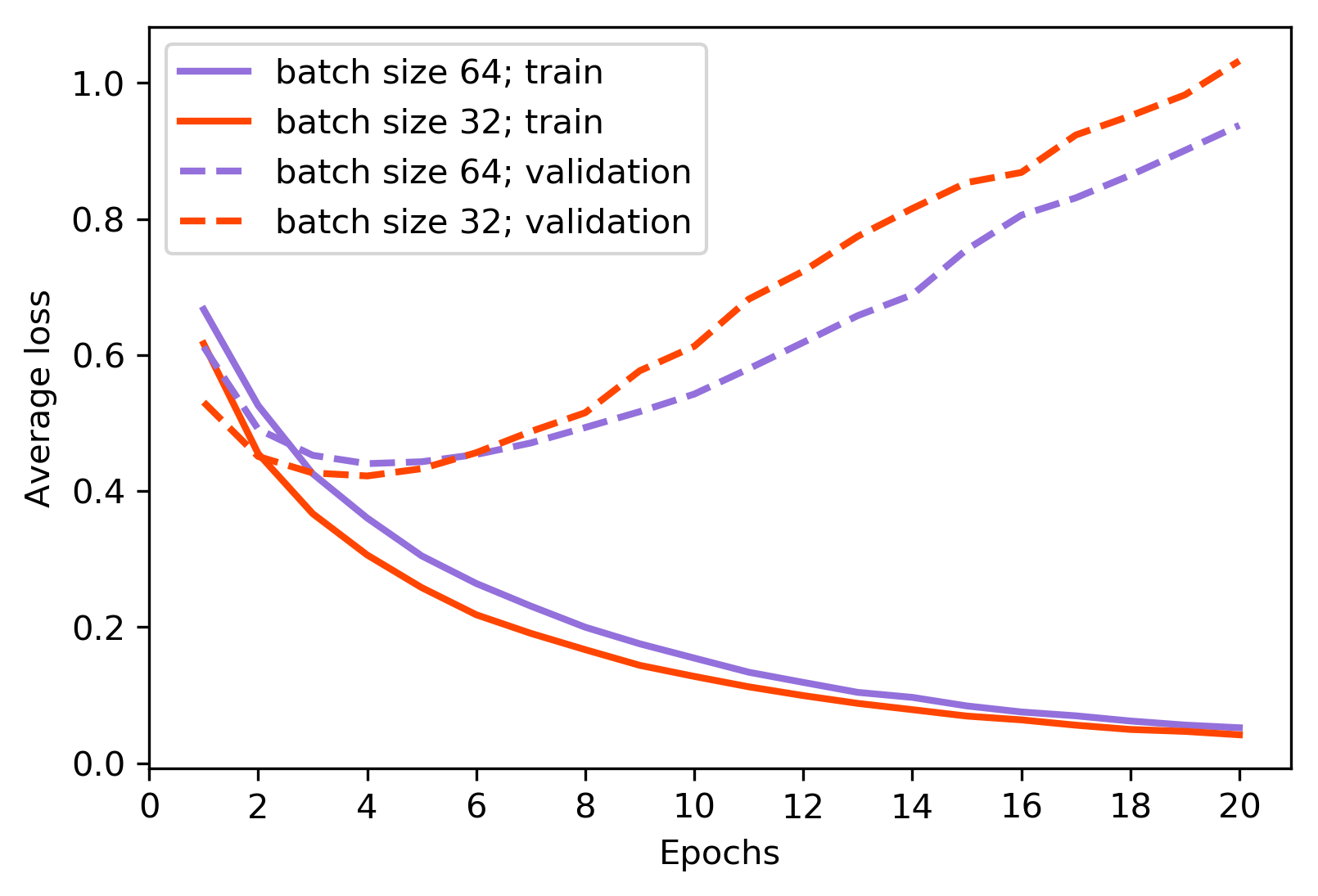}&
            \hspace{-1.5em}\includegraphics[width=0.355\textwidth]{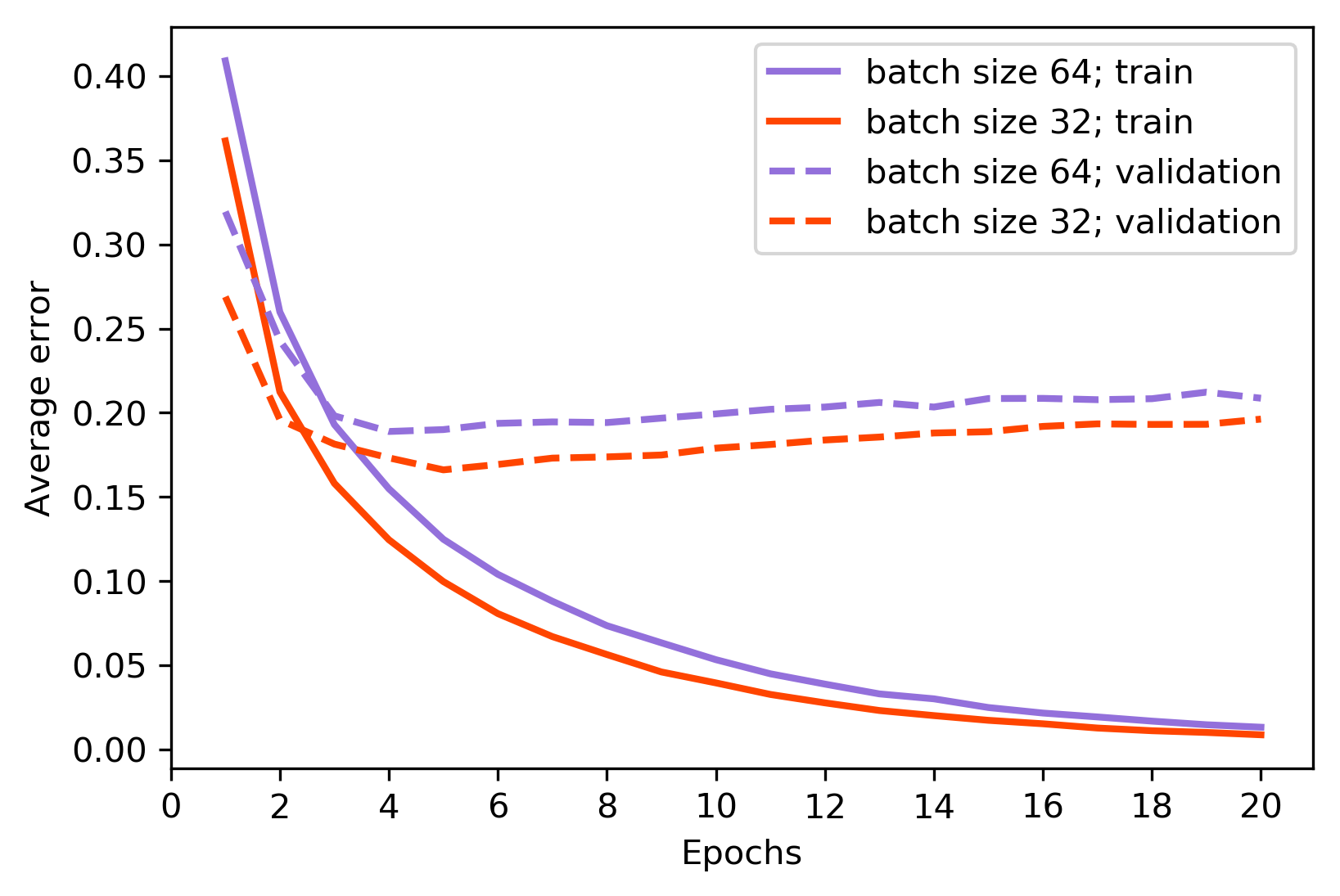}\\
            \small(a) Variance of stochastic gradients &\small(b) Training and validation loss &\small (c) Training minus validation accuracy\\
        \end{tabular}
        % \vspace{-4mm} 
        \caption{\small{Experimental results for the XLNet model on the Yelp dataset. \textbf{Left:} The variance of stochastic gradient estimators vs epochs.  \textbf{Middle:} The training and validation loss vs epochs. \textbf{Right:} The training and validation accuracy vs epochs.}}
        \label{fig:YELP}
    \end{figure*}

    % \begin{figure*}[h!]
    %     \centering
    %     \begin{tabular}{cccccc}
    %         \hspace{-1.5em}\includegraphics[width=0.35\textwidth]{figure/MNIST_loss.png}&
    %         \hspace{-1.5em}\includegraphics[width=0.35\textwidth]{figure/MNIST_acc.png}&
    %         \hspace{-1.5em}\includegraphics[width=0.355\textwidth]{figure/MNIST_initial.png}\\
    %         \scriptsize(a) Log of loss vs epochs &\scriptsize(b) Log of error accuracy vs epochs &\scriptsize (c) Log of variance vs different initial weights\\
    %     \end{tabular}
    %     % \vspace{-4mm} 
    %     \caption{\small{Experimental results for the MNIST dataset.}}
    % \end{figure*}

In aforementioned two experiments we use SGD in its original form by randomly sampling mini-batches. In deep learning with large-scale training data such a strategy is computationally prohibitive and thus samples are scanned in a cyclic order which implies fixed mini-batches are processed many times. Therefore, in the next two datasets we perform standard ``epoch'' based training to empirically study the remaining two hypotheses discussed in the introduction (decreasing loss and error as a function of $b$) and sensitivity with respect to the initial weights. Note that we are using cross-entropy loss in the MNIST dataset and the Adam optimizer in the Yelp dataset and thus these experiments do not meet all of the assumptions of the analysis in Section \ref{sec:3}. 

% We further empirically verify the remaining two hypotheses discussed in the introduction (decreasing loss and error as a function of $b$) and sensitivity with respect to the initial weights. 
% Figure~\ref{fig:MNIST1} It is obtained on the MNIST dataset and thus it does not meet all of the assumptions of the analysis built in Section \ref{sec:3}. 

As shown in Figure~\ref{fig:MNIST2}(a), we run SGD with two batch sizes 64 and 128 on five different initial weights. This plot shows that, even the smallest value of the variance among the five different initial weights with a mini-batch size of 64, is still larger than the largest variance of mini-batch size 128. We observe that the sensitivity to the initial weights is not large.  
This plot also empirically verifies our conjecture in the introduction that the variance of the stochastic gradient estimators is a decreasing function of the mini-batch size, for all iterations of SGD in a general deep learning model. 

In addition, we also conjecture that there exists the decreasing property for the expected loss, error and the generalization ability with respect to the mini-batch size.
Figure~\ref{fig:MNIST1}(a) shows that the expected loss (again, randomness comes from different runs of SGD through the different mini-batches with the same initial weights and learning rates) on the training set is a decreasing function of $b$.  However, this decreasing property does not hold on the validation set when the loss tends to be stable or increasing, in other words, the model starts to be over-fitting. We hypothesize that this is because the learned weights start to bounce around a local minimum when the model is over-fitting. As the larger mini-batch size brings smaller variance, the weights are closer to the local minimum found by SGD, and therefore yield a smaller loss function value. Figure~\ref{fig:MNIST1}(b) shows that both the expected error on training and validation sets are decreasing functions of $b$.

Figure~\ref{fig:MNIST2}(b) exhibits a relationship between the model's generalization ability and the mini-batch size. As suggested by \cite{1227801}, we build a test set by distorting the 10,000 images of the validation set. The prediction accuracy is obtained on both training and test sets and we calculate the gap between these two accuracies every 100 epochs. We use this gap to measure the model generalization ability (the smaller the better). Figure~\ref{fig:MNIST2}(b) shows that the gap is an increasing function of $b$ starting at epoch 500, which partially aligns with our conjecture regarding the relationship between the generalization ability and the mini-batch size. We also test this on multiple choices of the hyper-parameters which control the degree of distortion in the test set and this pattern remains clear.

% shows that generalization defined as the difference of the training and validation accuracy is also a decreasing functions of $b$. 
% The larger the gap, the worse the generalization ability. For further discussion on the generalization ability, please see Appendix \ref{appendix:MNIST}. 
% Regarding test accuracy, for each one of the 10 runs we select the weights attaining the highest accuracy on the validation set and use these weights to conduct inference on test. At the end we average all the test accuracy values and use them in Figure~\ref{fig:ablations}(c). This is in contrast to the alternative of selecting a single optimal model across all runs. The three horizontal lines in Figure~\ref{fig:ablations}(c) show the expected test accuracy corresponding to each mini-batch size. 
% We emphasize that our metric of the generalization ability is different from the general definition of it. Generally, people train models for multiple times and select the optimal model which has the lowest validation accuracy by hyperparameter search. However, as we are focusing on fixing initial weights and learning rates and only investigating the impact of mini-batch sizes, we do not have hyperparameters to search for. Therefore, for fixed mini-batch size, initial points and learning rates, we run SGD for $N$ times. 

Figure~\ref{fig:YELP} shows the similar phenomenon that the variance of stochastic estimators and the expected loss and error on both training and validation sets are decreasing functions of $b$ even if we train XLNet using Adam. This example gives us confidence that the decreasing properties are not merely restricted on shallow neural networks or vanilla SGD algorithms. They actually appear in many advanced models and optimization methods.

\section{Summary and Future Work}

We examine the impact of the mini-batch size on the dynamics of SGD. Our focus is on the variance of stochastic gradient estimators. For linear regression and a two-layer linear network, we are able to theoretically prove that the variance conjecture holds. We further experiment on multiple models and datasets to verify our claims and their applicability to practical settings. Besides, we also empirically address the conjectures about the expected loss and the generalization ability. 

There are several possible directions for future work. One obvious extension of this work is to show the decreasing property of variance to more general machine learning models, like fully connected networks with activation functions and residual connections. Another challenging research direction is to theoretically investigate the impact of the mini-batch size on the expected loss and the generalization ability of machine learning models (the conjectures we mentioned in Section \ref{sec:1}). The extensions of this work to other optimization algorithms, like Adam and Gradient Boosting Machines, are also very attractive. We hope our proof techniques can serve as a tool for future research.

\bibliography{ref}
\bibliographystyle{icml2020}

%%%%%%%%%%%%%%%%%%%%%%%%%%%%%%%%%%%%%%%%%%%%%%%%%%%%%%%%%%%%%%%%%%%%%%%%%%%%%%%
%%%%%%%%%%%%%%%%%%%%%%%%%%%%%%%%%%%%%%%%%%%%%%%%%%%%%%%%%%%%%%%%%%%%%%%%%%%%%%%
% DELETE THIS PART. DO NOT PLACE CONTENT AFTER THE REFERENCES!
%%%%%%%%%%%%%%%%%%%%%%%%%%%%%%%%%%%%%%%%%%%%%%%%%%%%%%%%%%%%%%%%%%%%%%%%%%%%%%%
%%%%%%%%%%%%%%%%%%%%%%%%%%%%%%%%%%%%%%%%%%%%%%%%%%%%%%%%%%%%%%%%%%%%%%%%%%%%%%%

\newpage
\appendix
\onecolumn

\section{Proofs}

\subsection{Proofs of Results in Section \ref{subsec:linearRegression}}
  
  For two matrices $A, B$ with the same dimension, we define the inner product $\iprod{A}{B} \triangleq \trace{A^\transp B} $.
  
    \begin{lemma} \label{lemma:product}
        Suppose that $f(x)$ and $g(x)$ are both smooth, non-negative and decreasing functions of $x \in \reals$. Then $h(x) = f(x) g(x)$ is also a non-negative and decreasing function of $x$.
    \end{lemma}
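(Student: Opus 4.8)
The plan is to split the claim into its two assertions and dispatch each with elementary calculus, exploiting the smoothness hypothesis to work at the level of derivatives. Non-negativity requires no work at all: $h(x) = f(x) g(x)$ is a pointwise product of two non-negative functions, so $h(x) \ge 0$ for every $x \in \reals$.

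For the decreasing property I would differentiate. Since $f$ and $g$ are smooth, the product rule gives
\begin{equation*}
    h'(x) = f'(x) g(x) + f(x) g'(x).
\end{equation*}
The decreasing hypothesis supplies $f'(x) \le 0$ and $g'(x) \le 0$, while non-negativity supplies $f(x) \ge 0$ and $g(x) \ge 0$. Each of the two summands is therefore a product of a non-positive factor with a non-negative factor, hence non-positive, and we conclude $h'(x) \le 0$ for all $x$. A function with non-positive derivative on $\reals$ is (weakly) decreasing, which is exactly the sense used elsewhere in the paper, so $h$ is decreasing.

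There is essentially no obstacle here; the only point worth a moment's care is the meaning of ``decreasing.'' Under the weak (non-increasing) reading consistent with the rest of the manuscript, the sign analysis above is conclusive. If one instead insisted on strict monotonicity, the argument would only deliver strictness where at least one factor is strictly positive and the corresponding derivative strictly negative, so the weak statement is the natural and fully general one to prove. I would also remark that smoothness is far stronger than required—plain differentiability, or even just monotonicity via the product of decreasing non-negative sequences, would suffice—but since the lemma will be applied to variance expressions that are polynomials in $1/b$ (and hence smooth in a continuous relaxation of $b$), phrasing it for smooth $f, g$ is the convenient form.
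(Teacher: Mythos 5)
Your proof is correct and follows exactly the same route as the paper's: non-negativity is immediate from the product of non-negative factors, and the product rule gives $h'(x) = f'(x)g(x) + f(x)g'(x) \le 0$ by the sign conditions. The paper's version is just terser; your extra remarks on the weak reading of ``decreasing'' and the dispensability of smoothness are accurate but not needed.
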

    
    \begin{proof}
        It is obvious that $h(x)$ is non-negative for all $x$. The first-order derivative of $h$ is $$ h'(x) = f'(x) g(x) + f(x) g'(x) \le 0, $$
        and thus $h(x)$ is also a decreasing function of $x$.
    \end{proof}

    \begin{proof}
    [Proof of Lemma \ref{lemma:1}]
        Note that \begin{align*}
    		\condexp{}{g_t^b \pth{g_t^b}^\transp}{\calF_t^b} &= \frac{1}{b^2}\condexp{}{\sum_{i\in \calB_t^b} \grad L_i\pth{w_t^b} \sum_{i\in \calB_t^b} \grad L_i\pth{w_t^b}^\transp}{\calF_t^b} \\
    		&=\frac{1}{b^2}\left( \frac{C_{n-1}^{b-1}}{C_n^b} \sum_{i=1}^n \grad L_i\pth{w_t^b}\grad L_i\pth{w_t^b}^\transp + \frac{C_{n-2}^{b-2}}{C_n^b} \sum_{i\neq j} \grad L_i\pth{w_t^b}\grad L_j\pth{w_t^b}^\transp \right) \\
    		&=\frac{1}{b^2}\left( \frac{b}{n} \sum_{i=1}^n \grad L_i\pth{w_t^b}\grad L_i\pth{w_t^b}^\transp + \frac{b(b-1)}{n(n-1)} \sum_{i\neq j} \grad L_i\pth{w_t^b}\grad L_j\pth{w_t^b}^\transp \right) \\
    		&= \frac{1}{b^2}\left( \frac{b(n-b)}{n(n-1)}\sum_{i=1}^n \grad L_i\pth{w_t^b}\grad L_i\pth{w_t^b}^\transp + \frac{b(b-1)}{n(n-1)} \sum_{i=1}^n \grad L_i\pth{w_t^b}\sum_{i=1}^n \grad L_i\pth{w_t^b}^\transp \right)\\
    		&= \frac{n-b}{bn(n-1)} \sum_{i=1}^n \grad L_i\pth{w_t^b}\grad L_i\pth{w_t^b}^\transp + \frac{(b-1)n}{b(n-1)}\grad L\pth{w_t^b}\grad L\pth{w_t^b}^\transp.
    	\end{align*}
        For any $A  \in \mathbb{R}^{p\times p}$, we have \begin{align*}
            \condexp{}{\norm{A g_t^b}^2}{\calF_t^b} &= \condexp{}{\pth{g_t^b}^\transp A^\transp A g_t^b}{\calF_t^b}  =  \condexp{}{\trace{\pth{g_t^b}^\transp A^\transp A g_t^b}}{\calF_t^b} \\
            &= \condexp{}{\trace{ A^\transp A g_t^b \pth{g_t^b}^\transp}}{\calF_t^b} \\
            &= \trace{ A^\transp A \condexp{}{g_t^b \pth{g_t^b}^\transp}{\calF_t^b}} \\
            &= \trace{\frac{n-b}{bn(n-1)} \sum_{i=1}^n A^\transp A\grad L_i\pth{w_t^b}\grad L_i\pth{w_t^b}^\transp + \frac{(b-1)n}{b(n-1)}A^\transp A\grad L\pth{w_t^b}\grad L\pth{w_t^b}^\transp} \\
            &= \frac{n-b}{bn(n-1)} \sum_{i=1}^n \norm{A\grad L_i\pth{w_t^b}}^2 + \frac{(b-1)n}{b(n-1)} \norm{A\grad L\pth{w_t^b}}^2\\
            &= c_b \left(\frac{1}{n}\sum_{i=1}^n\norm{A\grad L_i\pth{w_t^b}}^2 - \norm{A\grad L\pth{w_t^b}}^2\right) + \norm{A\grad L\pth{w_t^b}}^2.
        \end{align*}
        
        Therefore, we have \begin{align*}
            \condvar{}{A g_t^b}{\calF_t^b} &= \condexp{}{\norm{A g_t^b}^2}{\calF_t^b} - \norm{\condexp{}{A g_t^b}{\calF_t^b}}^2 \\
            &= \condexp{}{\norm{A g_t^b}^2}{\calF_t^b} - \norm{A\grad L\pth{w_t^b}}^2 \\ 
            &= c_b \left(\frac{1}{n}\sum_{i=1}^n\norm{A\grad L_i\pth{w_t^b}}^2 - \norm{A\grad L\pth{w_t^b}}^2\right).
        \end{align*}
    
    \end{proof}

\begin{proof}[Proof of Lemma \ref{lemma:3}]
        Let $C_i = x_i x_i^\transp$ and $C = \frac{1}{n} \sum_{i=1}^n C_i $. For the given $A_1, \ldots, A_n$, we denote $A = \sum_{i=1}^n A_i C_i $. Then we have \begin{align*}
            &\condexp{}{\norm{\sum_{i=1}^{n} A_i \grad L_i\pth{w_{t+1}^b}}^2}{\calF_0} = \condexp{}{\condexp{}{\norm{\sum_{i=1}^{n} A_i \grad L_i\pth{w_{t+1}^b}}^2}{\calF_t^b}}{\calF_0}  \\
            =\ & \condexp{}{\condexp{}{\norm{\sum_{i=1}^{n} A_i \pth{x_i^\transp w_{t+1}^b - y_i}x_i}^2}{\calF_t^b}}{\calF_0} \\
            =\ & \condexp{}{\condexp{}{\norm{\sum_{i=1}^{n} A_i \pth{x_i^\transp \pth{w_{t}^b - \alpha_t g_t^b} - y_i}x_i}^2}{\calF_t^b}}{\calF_0} \\
            =\ & \condexp{}{\condexp{}{\norm{\sum_{i=1}^{n} A_i \grad L_i\pth{w_{t}^b} - \alpha_t A g_t^b}^2}{\calF_t^b}}{\calF_0} \\
            =\ & \condexp{}{\norm{\sum_{i=1}^{n} A_i \grad L_i\pth{w_{t}^b}}^2}{\calF_0} - 2\alpha_t \condexp{}{\condexp{}{\iprod{\sum_{i=1}^{n} A_i \grad L_i\pth{w_{t}^b}}{A g_t^b}}{\calF_t^b}}{\calF_0}\\
            &+\alpha_t^2 \condexp{}{\condexp{}{\norm{A g_t^b}^2}{\calF_t^b}}{\calF_0} \\
            =\ & \condexp{}{\norm{\sum_{i=1}^{n} A_i \grad L_i\pth{w_{t}^b}}^2}{\calF_0} - 2\alpha_t \condexp{}{\iprod{\sum_{i=1}^{n} A_i \grad L_i\pth{w_{t}^b}}{A \grad L\pth{w_t^b}}}{\calF_0}\\
            &+\alpha_t^2 \condexp{}{c_b \pth{\frac{1}{n} \sum_{i=1}^n \norm{A\grad L_i(w_t^{b})}^2 - \norm{A\grad L(w_t^{b})}^2} + \norm{A \grad L(w_t^{b})}^2 }{\calF_0} \\
            =\ & \condexp{}{\norm{\sum_{i=1}^{n} A_i \grad L_i\pth{w_{t}^b} - \alpha_t A \grad L(w_t^{b})}^2}{\calF_0} +\alpha_t^2 c_b \condexp{}{\frac{1}{n} \sum_{i=1}^n \norm{A\grad L_i(w_t^{b})}^2 - \norm{A\grad L(w_t^{b})}^2}{\calF_0} \\
            =\ & \condexp{}{\norm{\sum_{i=1}^{n} A_i \grad L_i\pth{w_{t}^b} - \alpha_t A \grad L(w_t^{b})}^2}{\calF_0} + \frac{\alpha_t^2 c_b}{n^2} \sum_{i\neq j}\condexp{}{\norm{A \grad L_i\pth{w_{t}^b} - A\grad L_j \pth{w_{t}^b}}^2}{\calF_0}\\
            =\ & \condexp{}{\norm{\sum_{i=1}^{n} \pth{A_i - \frac{\alpha_t}{n} A} \grad L_i\pth{w_{t}^b}}^2}{\calF_0} + \frac{\alpha_t^2 c_b}{n^2} \sum_{i=1}^n \sum_{j=1}^n\condexp{}{\norm{A \grad L_i\pth{w_{t}^b} - A\grad L_j \pth{w_{t}^b}}^2}{\calF_0}.
        \end{align*}
        Therefore, if we set $B_i = A_i - \frac{\alpha_t}{n} A $ and $$ B_i^{kl}=\left\{
            \begin{aligned}
                A &  & i=k, i\neq l, \\
                -A &  & i=l, i\neq k, \\
                0 &  & \textrm{otherwise},
            \end{aligned}
            \right. $$
        we have \begin{align*}
                \condexp{}{\norm{\sum_{i=1}^{n} A_i \grad L_i\pth{w_{t+1}^b}}^2}{\calF_0} = \condexp{}{\norm{\sum_{i=1}^{n} B_i \grad L_i\pth{w_{t}^b}}^2}{\calF_0} + \frac{\alpha_t^2 c_b}{n^2} \sum_{k=1}^n \sum_{l=1}^n\condexp{}{\norm{\sum_{i=1}^{n} B_i^{kl} \grad L_i\pth{w_{t}^b}}^2}{\calF_0}.
            \end{align*}

    \end{proof}

    \begin{proof}[Proof of Theorem \ref{thm:1}]
        We use induction to show this statement. 
        
        When $t = 0$, $\condexp{}{\norm{\sum_{i=1}^{n} A_i \grad L_i\pth{w_{t}^b}}^2}{\calF_0} = \norm{\sum_{i=1}^{n} A_i \grad L_i\pth{w_0}}^2 $ which is invariant of $b$. Therefore, it is a decreasing function of $b$.
        
        Suppose the statement holds for $t$. For any set of matrices $\left\{A_1, \ldots, A_n \right\}$ in $\mathbb{R}^{p\times p}$, by Lemma \ref{lemma:3} we know that there exist matrices $\left\{B_1, \cdots, B_n\right\}$ and $\left\{ B_i^{kl}: i,k,l \in [n] \right\}$ such that \begin{align*}
                \condexp{}{\norm{\sum_{i=1}^{n} A_i \grad L_i\pth{w_{t+1}^b}}^2}{\calF_0}= \condexp{}{\norm{\sum_{i=1}^{n} B_i \grad L_i\pth{w_{t}^b}}^2}{\calF_0} + \frac{\alpha_t^2 c_b}{n^2} \sum_{k=1}^n \sum_{l=1}^n\condexp{}{\norm{\sum_{i=1}^{n} B_i^{kl} \grad L_i\pth{w_{t}^b}}^2}{\calF_0}.
            \end{align*}
        
        By induction, we know that $\condexp{}{\norm{\sum_{i=1}^{n} B_i \grad L_i\pth{w_{t}^b}}^2}{\calF_0}$ and all $\condexp{}{\norm{\sum_{i=1}^{n} B_i^{kl} \grad L_i\pth{w_{t}^b}}^2}{\calF_0}$ are non-negative and decreasing functions of $b$. Besides, clearly $\frac{\alpha_t^2 c_b}{n^2} = \frac{\alpha_t^2 (n-b)}{bn^3(n-1)}$ is a non-negative and decreasing function of $b$. By Lemma \ref{lemma:product}, we know that $ \frac{\alpha_t^2 c_b}{n^2} \condexp{}{\norm{\sum_{i=1}^{n} B_i^{kl} \grad L_i\pth{w_{t}^b}}^2}{\calF_0} $ is also a non-negative and decreasing function of $b$. Finally, $\condexp{}{\norm{\sum_{i=1}^{n} A_i \grad L_i\pth{w_{t+1}^b}}^2}{\calF_0}$, as the sum of non-negative and decreasing functions in $b$, is a non-negative and decreasing function of $b$.
    
    \end{proof}
    
    In order to prove 
    Theorem \ref{thm:2}, we split the task to two separate theorems about the full gradient and the stochastic gradient and prove them one by one. 
    
    \begin{theorem} \label{theorem:varianceFullGradient}
        Fixing initial weights $w_0$, $\var\left(B\grad L\pth{w_t^b} \, \middle| \calF_0 \right)$ is a decreasing function of mini-batch size $b$ for all $b \in [n]$, $t \in \naturals$, and all square matrices $B \in \mathbb{R}^{p \times p}$.
    \end{theorem}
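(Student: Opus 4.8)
The plan is to decompose the conditional variance into its second-moment and squared-mean pieces,
\begin{align*}
\condvar{}{B\grad L\pth{w_t^b}}{\calF_0} = \condexp{}{\norm{B\grad L\pth{w_t^b}}^2}{\calF_0} - \norm{\condexp{}{B\grad L\pth{w_t^b}}{\calF_0}}^2,
\end{align*}
and then to show that the first term is a decreasing function of $b$ while the second term is \emph{constant} in $b$; their difference is then automatically decreasing.

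For the second-moment term I would use the identity $\grad L\pth{w_t^b} = \frac{1}{n}\sum_{i=1}^n \grad L_i\pth{w_t^b}$, so that $B\grad L\pth{w_t^b} = \sum_{i=1}^n A_i \grad L_i\pth{w_t^b}$ with $A_i = \frac{1}{n}B$. With this choice $\condexp{}{\norm{B\grad L\pth{w_t^b}}^2}{\calF_0}$ is exactly an instance of the quantity already shown to be a decreasing function of $b$ in Theorem \ref{thm:1}, so this term requires no further work.

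The crux is the squared-mean term, and the key observation is that in linear regression the gradient is \emph{affine} in $w$: writing $H = \frac{1}{n}\sum_{i=1}^n x_i x_i^\transp$ and $c = \frac{1}{n}\sum_{i=1}^n y_i x_i$, we have $\grad L(w) = Hw - c$. Combining this with the unbiasedness $\condexp{}{g_t^b}{\calF_t^b} = \grad L\pth{w_t^b}$, the update $w_{t+1}^b = w_t^b - \alpha_t g_t^b$ gives $\condexp{}{w_{t+1}^b}{\calF_t^b} = \pth{I - \alpha_t H} w_t^b + \alpha_t c$. Applying the tower property, the conditional mean $\bar w_t \triangleq \condexp{}{w_t^b}{\calF_0}$ obeys the recursion $\bar w_{t+1} = \pth{I - \alpha_t H} \bar w_t + \alpha_t c$ with $\bar w_0 = w_0$, which makes no reference to $b$. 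By induction on $t$ the vector $\bar w_t$ is therefore independent of $b$, and hence so is $\condexp{}{B\grad L\pth{w_t^b}}{\calF_0} = B\pth{H \bar w_t - c}$; the squared-mean term is thus a constant in $b$.

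Putting the pieces together, $\condvar{}{B\grad L\pth{w_t^b}}{\calF_0}$ equals a decreasing function of $b$ minus a $b$-independent constant, hence is decreasing in $b$ for every $t\in\naturals$ and every square $B$. The main obstacle is precisely the squared-mean term: the entire argument hinges on the affineness of $\grad L$ in $w$, which is what decouples the mean trajectory $\bar w_t$ from the batch size. Without this linear structure (as in the two-layer network of the next subsection) the conditional mean would itself depend on $b$, and this clean monotonicity argument would break down.
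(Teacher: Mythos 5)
Your proof is correct, but it takes a genuinely different route from the paper's. The paper proves this statement by induction on $t$: using the update rule and Lemma \ref{lemma:1} it derives the one-step recursion
\begin{align*}
\var\left(B\grad L\pth{w_t^b}\,\middle|\,\calF_0\right) = \var\left(B\pth{I-\alpha_t C}\grad L\pth{w_{t-1}^b}\,\middle|\,\calF_0\right) + \frac{\alpha_t^2 c_b}{n^2}\sum_{i\neq j}\condexp{}{\norm{BC\grad L_i\pth{w_{t-1}^b}-BC\grad L_j\pth{w_{t-1}^b}}^2}{\calF_0},
\end{align*}
applies the induction hypothesis to the first term (with $B(I-\alpha_t C)$ playing the role of $B$), and handles the second term via Theorem \ref{thm:1} combined with the monotonicity of $c_b$ and Lemma \ref{lemma:product}. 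You instead split the variance into second moment minus squared mean, dispose of the second moment as the single instance $A_i = \frac{1}{n}B$ of Theorem \ref{thm:1}, and eliminate the squared-mean term by noting that the affineness of $\grad L$ together with the unbiasedness of $g_t^b$ forces the mean trajectory $\condexp{}{w_t^b}{\calF_0}$ --- and hence $\condexp{}{B\grad L\pth{w_t^b}}{\calF_0} = B\pth{H\condexp{}{w_t^b}{\calF_0} - c}$ --- to be independent of $b$ (this does require, as the paper also assumes, that $\alpha_t$ does not depend on $b$). This is a clean observation the paper never makes explicit, and it spares you a second induction on $t$ entirely. What the paper's recursive decomposition buys in exchange is a telescoping representation of the variance as a sum of manifestly non-negative decreasing terms, which is the same structural template it reuses for $\var\left(Bg_t^b\,\middle|\,\calF_0\right)$ and for the norm results; your shortcut leans specifically on the affine gradient structure and, as you correctly note, would not survive the passage to the two-layer network.
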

    
    \begin{theorem} \label{thm:decLinear}
        Fixing initial weights $w_0$, $\var\left(Bg_t^b \, \middle| \calF_0 \right)$ is a decreasing function of mini-batch size $b$ for all $b \in [n]$, $t \in \naturals$, and all square matrices $B \in \mathbb{R}^{p \times p}$.
    \end{theorem}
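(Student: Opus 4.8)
The plan is to prove Theorem \ref{thm:decLinear} by applying the law of total variance to split $\condvar{}{Bg_t^b}{\calF_0}$ into two pieces, each of which is already known (or easily shown) to be decreasing in $b$. Since $\calF_0 \subseteq \calF_t^b$ and $g_t^b$ is unbiased given $\calF_t^b$ (so $\condexp{}{Bg_t^b}{\calF_t^b} = B\grad L(w_t^b)$), I would first write
$$\condvar{}{Bg_t^b}{\calF_0} = \condexp{}{\condvar{}{Bg_t^b}{\calF_t^b}}{\calF_0} + \condvar{}{B\grad L(w_t^b)}{\calF_0}.$$
The second summand is exactly the full-gradient variance treated in Theorem \ref{theorem:varianceFullGradient}, hence decreasing in $b$, so the whole task reduces to controlling the first summand.

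For the first summand I would substitute Lemma \ref{lemma:1} with $A = B$ and take $\calF_0$-conditional expectation, obtaining
$$\condexp{}{\condvar{}{Bg_t^b}{\calF_t^b}}{\calF_0} = c_b \, \condexp{}{\frac{1}{n}\sum_{i=1}^n\norm{B\grad L_i(w_t^b)}^2 - \norm{B\grad L(w_t^b)}^2}{\calF_0}.$$
The bracketed quantity is a within-batch variance of the sample-wise gradients and is therefore non-negative, but it is written as a \emph{difference} of two quantities each decreasing in $b$, and a difference of decreasing functions need not be decreasing. This is the main obstacle, and I would circumvent it with the mean-deviation identity $\frac{1}{n}\sum_i \norm{v_i}^2 - \norm{\bar v}^2 = \frac{1}{2n^2}\sum_{i\neq j}\norm{v_i - v_j}^2$ applied to $v_i = B\grad L_i(w_t^b)$, which yields
$$\frac{1}{n}\sum_{i=1}^n\norm{B\grad L_i(w_t^b)}^2 - \norm{B\grad L(w_t^b)}^2 = \frac{1}{2n^2}\sum_{i\neq j}\norm{B\grad L_i(w_t^b) - B\grad L_j(w_t^b)}^2.$$

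Each term $\norm{B\grad L_i(w_t^b) - B\grad L_j(w_t^b)}^2$ is of the form $\norm{\sum_{k=1}^n A_k \grad L_k(w_t^b)}^2$ with $A_i = B$, $A_j = -B$, and all other $A_k$ the zero matrix; hence by Theorem \ref{thm:1} its $\calF_0$-conditional expectation is a non-negative decreasing function of $b$. Summing over $i \neq j$, the $\calF_0$-conditional expectation of the bracketed quantity is then a non-negative decreasing function of $b$. Finally I would combine the pieces: $c_b = \frac{n-b}{b(n-1)}$ is non-negative and decreasing in $b$, so Lemma \ref{lemma:product} shows that the product of $c_b$ with this non-negative decreasing expectation is again non-negative and decreasing. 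Adding this to the decreasing term supplied by Theorem \ref{theorem:varianceFullGradient}, and using that a sum of decreasing functions is decreasing, gives that $\condvar{}{Bg_t^b}{\calF_0}$ is decreasing in $b$, as required. The one genuinely non-routine idea is the re-expression in the third paragraph, which converts an apparent difference of decreasing functions into a non-negative linear combination of squared norms of linear combinations of the $\grad L_k(w_t^b)$, precisely the form to which Theorem \ref{thm:1} applies term by term.
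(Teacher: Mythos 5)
Your proposal is correct and follows essentially the same route as the paper: the paper likewise splits $\var\left(Bg_t^b \,\middle|\, \calF_0\right)$ into $\var\left(B\grad L(w_t^b) \,\middle|\, \calF_0\right)$ plus a $c_b$-weighted term via Lemma \ref{lemma:1}, rewrites the latter as a sum of pairwise differences $\condexp{}{\norm{B\grad L_i(w_t^b) - B\grad L_j(w_t^b)}^2}{\calF_0}$, and applies Theorem \ref{thm:1} with $A_i = B$, $A_j = -B$ together with Theorem \ref{theorem:varianceFullGradient} and the product argument. (Your constant $\tfrac{1}{2n^2}$ in the mean-deviation identity is in fact the correct one; the discrepancy is immaterial to the monotonicity conclusion.)
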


        \begin{proof}[Proof of Theorem \ref{theorem:varianceFullGradient}]
        We induct on $t$ to show that the statement holds. For $t = 0$, we have $\var\left(B\grad L\pth{w_t^b} \, \middle| \calF_0 \right) = 0 $ for any matrix $B$. Suppose the statement holds for $t - 1 \ge 0$. Note that from \begin{align*}
            \grad L\pth{w_t^b} &= \frac{1}{n}\sum_{i=1}^n x_i\pth{x_i^\transp w_t^b - y_i}\\
            &= \frac{1}{n}\sum_{i=1}^n x_i\pth{x_i^\transp \pth{w_{t-1}^b - \alpha_t g_{t-1}^b} - y_i}\\
            &= \frac{1}{n}\sum_{i=1}^n x_i\pth{x_i^\transp w_{t-1}^b - y_i} - \frac{\alpha_t}{n}\sum_{i=1}^n x_i x_i^\transp g_{t-1}^b \\
            &= \grad L\pth{w_{t-1}^b} - \alpha_t C g_{t-1}^b,
        \end{align*}
        
        we have \begin{align}
            &\var\left(B\grad L\pth{w_t^b} \, \middle| \calF_0 \right) \nonumber\\
            =\  & \var\left(B\grad L\pth{w_{t-1}^b} - \alpha_t BC g_{t-1}^b \, \middle| \calF_0 \right) \nonumber\\
            =\  & \condexp{}{\norm{B\grad L\pth{w_{t-1}^b} - \alpha_t BC g_{t-1}^b}^2}{\calF_0^b} - \norm{\condexp{}{B\grad L\pth{w_{t-1}^b} - \alpha_t BC g_{t-1}^b}{\calF_0^b}}^2 \nonumber\\
            =\  & \condexp{}{\norm{B\grad L\pth{w_{t-1}^b}}^2 - 2\alpha_t \iprod{B\grad L\pth{w_{t-1}^b}}{BC g_{t-1}^b} + \alpha_t^2 \norm{BC g_{t-1}^b}^2}{\calF_0^b} - \norm{\condexp{}{B\grad L\pth{w_{t-1}^b} - \alpha_t BC g_{t-1}^b}{\calF_0^b}}^2 \nonumber\\
            =\  & \condexp{}{\norm{B\grad L\pth{w_{t-1}^b}}^2}{\calF_0} + \alpha_t^2 \condexp{}{\condexp{}{\norm{BC g_{t-1}^b}^2}{\calF_{t-1}^b}}{\calF_0^b} - 2\alpha_t \condexp{}{ \condexp{}{\iprod{B\grad L\pth{w_{t-1}^b}}{BC g_{t-1}^b}}{\calF_{t-1}^b} }{\calF_0} \nonumber\\
            \  & - \norm{\condexp{}{ \condexp{}{B\grad L\pth{w_{t-1}^b} - \alpha_t BC g_{t-1}^b}{\calF_{t-1}^b} }{\calF_0^b}}^2 \nonumber\\
            =\  & \condexp{}{\norm{B\grad L\pth{w_{t-1}^b}}^2}{\calF_0} + \alpha_t^2 \condexp{}{ c_b \left(\frac{1}{n}\sum_{i=1}^n\norm{BC\grad L_i\pth{w_{t-1}^b}}^2 - \norm{BC\grad L\pth{w_{t-1}^b}}^2\right) + \norm{BC\grad L\pth{w_{t-1}^b}}^2 }{\calF_0} \nonumber\\
            \  &  - 2\alpha_t \condexp{}{ \iprod{B\grad L\pth{w_{t-1}^b}}{BC\grad L\pth{w_{t-1}^b}} }{\calF_0} - \norm{\condexp{}{ B\grad L\pth{w_{t-1}^b} - \alpha_t BC \grad L\pth{w_{t-1}^b} }{\calF_0^b}}^2 \label{eq:1.5}\\
            =\  & \condexp{}{\norm{B\pth{I - \alpha_t C}\grad L\pth{w_{t-1}^b}}^2}{\calF_0^b} + \alpha_t^2c_b \condexp{}{  \left(\frac{1}{n}\sum_{i=1}^n\norm{BC\grad L_i\pth{w_{t-1}^b}}^2 - \norm{BC\grad L\pth{w_{t-1}^b}}^2\right) }{\calF_0} \nonumber\\
            \  & - \norm{\condexp{}{ B\pth{I - \alpha_t C}\grad L\pth{w_{t-1}^b} }{\calF_0^b}}^2 \nonumber\\
            =\  & \var\left( B\pth{I - \alpha_t C}\grad L\pth{w_{t-1}^b} \, \middle| \calF_0 \right) + \alpha_t^2c_b  \left(\frac{1}{n}\sum_{i=1}^n\condexp{}{\norm{BC\grad L_i\pth{w_{t-1}^b}}^2}{\calF_0} - \condexp{}{\norm{BC\grad L\pth{w_{t-1}^b}}^2 }{\calF_0} \right) \nonumber\\
            =\  & \var\left( B\pth{I - \alpha_t C}\grad L\pth{w_{t-1}^b} \, \middle| \calF_0 \right) + \frac{\alpha_t^2c_b}{n^2}\sum_{i\neq j}\condexp{}{\norm{BC\grad L_i\pth{w_{t-1}^b} - BC\grad L_j\pth{w_{t-1}^b}}^2}{\calF_0}, \label{eq:1.6}
        \end{align}
        where \eqref{eq:1.5} is by Lemma \ref{lemma:1}. By induction, we know that the first term of \eqref{eq:1.6} is a decreasing function of $b$. Taking $A_i = BC, A_j = -BC, A_k = 0, k \in [n] \backslash \{i, j\}$ in Theorem \ref{thm:1}, we know that $$\condexp{}{\norm{BC\grad L_i\pth{w_{t-1}^b} - BC\grad L_j\pth{w_{t-1}^b}}^2}{\calF_0}$$ is also a decreasing function of $b$. Note that $\frac{\alpha_t^2c_b}{n^2}$ decreases as $b$ increases. By Lemma \ref{lemma:product} we learn that \eqref{eq:1.6} is a decreasing function of $b$ and hence we have completed the induction.
        
    \end{proof}

    \begin{proof}[Proof of Theorem \ref{thm:decLinear}]
    We have
        \begin{align*}
            \var\left( Bg_t^b \, \middle| \calF_0 \right) &= \condexp{}{\norm{Bg_t^b}^2}{\calF_0} - \norm{\condexp{}{Bg_t^b}{\calF_0}}^2 \\
            &= \condexp{}{\condexp{}{\norm{Bg_t^b}^2}{\calF_t^b}}{\calF_0} - \norm{\condexp{}{\condexp{}{Bg_t^b}{\calF_t^b}}{\calF_0}}^2 \\
            &= c_b \left(\frac{1}{n}\sum_{i=1}^n\condexp{}{\norm{B\grad L_i\pth{w_t^b}}^2}{\calF_0} - \condexp{}{\norm{B\grad L\pth{w_t^b}}^2}{\calF_0}\right) \\
            & \quad + \condexp{}{\norm{B\grad L\pth{w_t^b}}^2}{\calF_0} - \norm{\condexp{}{B\grad L\pth{w_t^b}}{\calF_0}}^2 \\
            &= \frac{c_b}{n^2} \sum_{i\neq j}\condexp{}{\norm{B \grad L_i\pth{w_{t}^b} - B\grad L_j \pth{w_{t}^b}}^2}{\calF_0} + \var\left(B\grad L\pth{w_t^b} \, \middle| \calF_0 \right).
        \end{align*}
        
        Taking $A_i = B, A_j = -B, A_k = 0, k \in [n] \backslash \{i, j\}$ in Theorem \ref{thm:1}, we know that $$\condexp{}{\norm{B \grad L_i\pth{w_{t}^b} - B\grad L_j \pth{w_{t}^b}}^2}{\calF_0}$$ is a decreasing and non-negative function of $b$ for all $i, j \in [n]$. By Theorem \ref{theorem:varianceFullGradient}, we know that $\var\left(B\grad L\pth{w_t^b} \, \middle| \calF_0 \right)$ is also a decreasing function of $b$. Therefore, $ \var\left(Bg_t^b \, \middle|\calF_0 \right)$, as the sum of two decreasing functions of $b$, is also a decreasing function of $b$.
    \end{proof}
    
        \begin{proof}[Proof of Corollary \ref{col:1}]
                Simply taking $B = I_p$ in Theorem \ref{thm:1} yields the proof.
        \end{proof}

\subsection{Proofs for Results in \ref{subsec:linearNetwork}}
    
    We often rely on the trivial facts that $x_1x_2^\transp = x_1 I_p x_2^\transp$ and $x_1x_2^\transp x_3x_4^\transp = x_1x_2^\transp I_p x_3x_4^\transp$.
    
    \begin{lemma} \label{lemma:rewriteTrace}
        Given a multiplicative term of parameter matrices $\sth{u_i v_i^\transp: u_i, v_i \in \reals^{p}, i \in [n_1]} \cup \{A_j: A_j \in \reals^{p\times p}, j \in [n_2]\} $ and constant matrix $\sth{I_p}$ such that $\deg (u_1 v_1^\transp; M) \ge 1$, we have $$\trace{M} = v_{1}^\transp M' u_{1}, $$
        where $M'$ is a multiplicative term of parameter matrices $\sth{u_i v_i^\transp: u_i, v_i \in \reals^{p}, i \in [n_1]} \cup \sth{A_j: A_j \in \reals^{p\times p}, j \in [n_2]} $ and constant matrix $\sth{I_p}$ such that $\deg(M) = \deg(M') + 1, \deg(A_j; M) = \deg (A_j; M'), j\in [n_2], \deg(u_i v_i^\transp; M) = \deg (u_i v_i^\transp; M'), i \in \fromto{2}{n_1}$ and $\deg(u_{1} v_{1}^\transp; M) = \deg (u_{1} v_{1}^\transp; M') + 1$.
    \end{lemma}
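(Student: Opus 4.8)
The plan is to exploit the cyclic invariance of the trace, together with the fact that each occurrence of $u_1 v_1^\transp$ contributes a rank-one factor that can be ``unfolded'' into the flanking vectors $v_1^\transp$ and $u_1$. Since $\deg(u_1 v_1^\transp; M) \ge 1$, the product $M = \prod_{i=1}^k A_i$ contains at least one factor equal to either $u_1 v_1^\transp$ or its transpose $v_1 u_1^\transp$. I would single out one such factor and rotate it to the front of the product.

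First I would treat the case where the chosen factor is $u_1 v_1^\transp$ itself. Writing $M = P\,(u_1 v_1^\transp)\,Q$ with $P$ and $Q$ the (possibly empty) products of the remaining factors, cyclicity of the trace gives $\trace{M} = \trace{u_1 v_1^\transp Q P}$. Because $u_1$ is a column vector, the identity $\trace{u_1 z^\transp} = z^\transp u_1$ (equivalently $\trace{AB}=\trace{BA}$ with $A = u_1$ and $B = v_1^\transp Q P$) collapses this to the scalar $v_1^\transp (Q P) u_1$. Setting $M' = Q P$ then yields $\trace{M} = v_1^\transp M' u_1$, and $M'$ is exactly $M$ with the single factor $u_1 v_1^\transp$ deleted.

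Second I would handle the case where the appearing factor is the transpose $v_1 u_1^\transp$. The same rotation gives the scalar $u_1^\transp (Q P) v_1$; since a scalar equals its own transpose, this equals $v_1^\transp (Q P)^\transp u_1$, so I would take $M' = (Q P)^\transp = P^\transp Q^\transp$. This $M'$ is still a multiplicative term of the same two families because $\widebar{\calX}$ and $\widebar{\calC}$ are closed under transposition (in particular $I_p^\transp = I_p$ and $(u_i v_i^\transp)^\transp = v_i u_i^\transp$), so transposing the whole product merely permutes and transposes its factors without leaving the allowed alphabet.

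In both cases the degree bookkeeping is immediate and requires no computation: deleting one $u_1 v_1^\transp$ (or $v_1 u_1^\transp$) factor lowers $\deg(u_1 v_1^\transp;\cdot)$ by exactly one and the total parameter degree $\deg(\cdot)$ by exactly one, while leaving every $\deg(A_j;\cdot)$ and every $\deg(u_i v_i^\transp;\cdot)$, $i \in \fromto{2}{n_1}$, unchanged; the transposition used in the second case preserves all degrees. The only real subtlety ---the ``hard part'' such as it is--- is this transpose case: naively rotating produces $u_1^\transp M'' v_1$ rather than the required $v_1^\transp M' u_1$, and the fix is precisely the observation that the expression is a scalar and that the matrix alphabet is transposition-closed, which together guarantee that the rewritten $M'$ stays within the prescribed families while landing in exactly the form $v_1^\transp M' u_1$.
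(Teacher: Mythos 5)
Your proposal is correct and follows essentially the same route as the paper's proof: write $M = P\,(u_1 v_1^\transp)\,Q$, use cyclicity of the trace together with $\trace{u_1 z^\transp} = z^\transp u_1$ to collapse to $v_1^\transp (QP) u_1$, and set $M' = QP$ with the same degree bookkeeping. The one point where you go beyond the paper is the explicit treatment of the case where the occurring factor is the transpose $v_1 u_1^\transp$ (handled via scalar self-transposition and closure of the alphabet under transposition); the paper silently assumes the untransposed form, so your extra case is a welcome, and strictly speaking necessary, completion rather than a deviation.
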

    
    \begin{proof}
        By the definition of multiplicative terms, we know that there exist two multiplicative terms $M_1, M_2$ of parameter matrices $\sth{u_i v_i^\transp: u_i, v_i \in \reals^{p}, i \in [n_1]} \cup \{A_j: A_j \in \reals^{p\times p}, j \in [n_2]\} $ and constant matrix $\sth{I_p}$ such that $$M = M_1 u_1 v_1^\transp M_2,$$
        where $\deg(M) = \deg(M_1) + \deg(M_2) + 1, \deg(A_j; M) = \deg (A_j; M_1) + \deg (A_j; M_2), j\in [n_2], \deg(u_i v_i^\transp; M) = \deg (u_i v_i^\transp; M_1) + \deg (u_i v_i^\transp; M_2), i \in \fromto{2}{n_1}$ and $\deg (u_1 v_1^\transp; M) = \deg (u_1 v_1^\transp; M_1) + \deg (u_1 v_1^\transp; M_2) + 1$. Therefore we have $$\trace{M} = \trace{M_1 u_1 v_1^\transp M_2} = \trace{v_1^\transp M_2 M_1 u_1} = v_1^\transp M_2 M_1 u_1.$$
        Note that $M' = M_2 M_1$ satisfies that $\deg(M') = \deg(M_1) + \deg(M_2), \deg(A_j, M') = \deg (A_j; M_1) + \deg (A_j; M_2), j\in [n_2], \deg(u_i v_i^\transp; M) = \deg (u_i v_i^\transp; M_1) + \deg (u_i v_i^\transp; M_2), i \in \fromto{2}{n_1}$ and $\deg (u_1 v_1^\transp; M') = \deg (u_1 v_1^\transp; M_1) + \deg (u_1 v_1^\transp; M_2) + 1$. We have finished the proof.
    \end{proof}

    The following two lemmas focus on the expectation of the product of quadratic forms of the standard normal samples. Lemma \ref{lemma:4} focuses on single sample while  \ref{lemma:normalProperty} focuses on the same form with $b$ i.i.d. samples drawn from the standard normal distribution.

	\begin{lemma} \label{lemma:4}
	    Given matrices $A_j \in \reals^{p \times p}, j\in [m-1]$, we have 
	    $$ \mathbb{E}_{x\sim\calN(0,I_p)} \qth{x x^\transp A_1 x x^\transp A_2 \cdots A_{m-1} x x^\transp} = \sum_{i=1}^{N_m} \prod_{k=1}^{n_i} \trace{M_{ik}}M_{i0}, $$
	    where $N_m$ and $ n_i,i\in [N_m]$ are constants depending on $m$ and $\sth{M_{ik}, k\in\fromto{0}{n_i}, i\in [N_m]}$ are multiplicative terms of parameter matrices $\sth{A_j,j\in [m-1]}$ and constant matrix $\sth{I_p}$. Furthermore, for every $i\in [N_m]$, we have $\sum_{k=0}^{n_i} \deg(A_j; M_{ik}) = 1, j\in [m-1]$ and therefore $\sum_{k=0}^{n_i} \deg \pth{M_{ik}} = m - 1$. 
	\end{lemma}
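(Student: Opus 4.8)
The plan is to reduce $\mathbb{E}[P]$, where $P = xx^\transp A_1 xx^\transp A_2 \cdots A_{m-1} xx^\transp$, to an application of Isserlis' (Wick's) theorem for Gaussian moments. First I would exploit the scalar pull-out identity
$$ P = \pth{\prod_{j=1}^{m-1} x^\transp A_j x}\, xx^\transp, $$
which holds because each $A_j$ sits between two copies of $x$ as the scalar $x^\transp A_j x$, leaving only the outer $x$ and $x^\transp$ to form the rank-one matrix $xx^\transp$. Consequently the $(s,t)$ entry of $P$ is the homogeneous degree-$2m$ polynomial $\pth{\prod_{j=1}^{m-1} x^\transp A_j x}\, x_s x_t$ in the coordinates of $x$, and $\mathbb{E}[P]$ is computed entrywise.

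Next I would apply Isserlis' theorem. The $2m$ copies of $x$ occupy $2m$ index slots: a row slot $a_j$ and a column slot $c_j$ from each quadratic form $x^\transp A_j x = \sum_{a_j,c_j} x_{a_j}(A_j)_{a_j c_j} x_{c_j}$, together with the two free slots $s,t$ from $xx^\transp$. Isserlis' theorem expands the expectation as a sum over all $(2m-1)!!$ perfect matchings of these slots, each matched pair contributing $\mathbb{E}[x_\cdot x_\cdot]=\delta$ and thereby identifying two indices; I would set $N_m=(2m-1)!!$, retaining every matching without combining terms.

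I would then read each matching off as a term $\prod_{k=1}^{n_i}\trace{M_{ik}}\,M_{i0}$. Following the forced index identifications, the matrices $A_j$ compose into products (in natural orientation $A_j$, or reversed as $A_j^\transp$ when the traversal runs backward through that slot). Components that close into a cycle avoiding $s,t$ yield a factor $\trace{M_{ik}}$ of a product of $A_j$'s and $I_p$, while the unique chain threading $s$ at one end and $t$ at the other is the open factor $M_{i0}$, whose $(s,t)$ entry is exactly what is summed. Because each $x^\transp A_j x$ supplies $A_j$ once and each matching uses every slot once, each $A_j$ appears exactly once across the factors of a matching, giving $\sum_{k=0}^{n_i}\deg(A_j;M_{ik})=1$ for all $j$ and hence $\sum_{k=0}^{n_i}\deg(M_{ik})=m-1$, as claimed.

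The main obstacle is the combinatorial bookkeeping that converts a matching into this decomposition: one must verify that the $s$–$t$ thread is always a single well-defined open chain (so there is exactly one factor $M_{i0}$), that every remaining component is a cycle (yielding a trace), and that traversal orientation is consistently absorbed into the allowed $A_j$ versus $A_j^\transp$. A clean alternative that avoids the global graph argument is induction on $m$ via Gaussian integration by parts, using Stein's identity $\mathbb{E}[x_s f(x)]=\mathbb{E}[\partial_s f(x)]$: differentiating $x_s$ against $\pth{\prod_j x^\transp A_j x}x_t$ peels one slot, producing a $\delta_{st}$ term and symmetrized factors $(A_j+A_j^\transp)$, after which the inductive hypothesis on the resulting degree-$(2m-2)$ moment supplies the multiplicative-term form with the same degree accounting. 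The Wick route is the more transparent of the two, while the inductive route makes the degree bookkeeping mechanical.
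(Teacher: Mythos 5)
Your argument is correct, and it is worth noting that the paper does not actually prove Lemma \ref{lemma:4} at all: its entire proof is a citation to Magnus (1978) on moments of products of quadratic forms in normal variables. Your Wick/Isserlis expansion is therefore a genuinely self-contained alternative, and it establishes exactly the structural claim the lemma asserts. The one step you flag as the main obstacle --- that each perfect matching decomposes into one open $s$--$t$ chain plus closed cycles --- is in fact immediate: form the multigraph on the $2m$ index slots whose edges are the $m-1$ ``$A_j$-edges'' $\{a_j,c_j\}$ together with the $m$ matching edges; every slot then has degree $2$ except $s$ and $t$, which have degree $1$ (they belong to no $A_j$-edge), so the connected components are exactly one $s$--$t$ path (possibly the single edge $\{s,t\}$, in which case $M_{i0}=I_p$) and a collection of cycles, each alternating matching edges with $A_j$-edges and hence reading off as $\trace{\cdot}$ of a word in the $A_j$ and $A_j^\transp$, with orientation of traversal deciding between the two. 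Since each $A_j$-edge lies in exactly one component, $\sum_{k}\deg(A_j;M_{ik})=1$ follows for every $j$, and $N_m=(2m-1)!!$ depends only on $m$, as required. What your route buys is an explicit, verifiable derivation together with a concrete bound on $N_m$; what the paper's citation buys is brevity and access to closed-form coefficients in the literature, which are not needed for the structural statement used downstream. Your suggested Stein-identity induction would also work, but the Wick argument as you give it is already complete once the graph observation above is made explicit.
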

	
	\begin{proof}
	    See \cite{magnus1978moments}.
	\end{proof}

	% Further, we say \textit{$p(\calX, \calC)$ is a polynomial of $\calX$ and $\calC$} if $$p(\calX, \calC) = \sum_{i=1}^m c_i M_i. $$ Here $c_i$'s are scalars and $M_i = M_i(\calX, \calC)$'s are  multiplicative terms of $\calX$ and $ \calC$.
    
    \begin{lemma} \label{lemma:normalProperty}
        We are given matrices $A_j \in \reals^{p \times p}, j\in [m-1] $ and random vectors $x_i, i\in [b]$ independently and identically drawn from $\calN(0, I_p)$. We assume that the multi-set $\calS = \sth{i_j, i_j': j \in [m]}$ satisfies that for every $i \in \calS$, $i$ is an element of $[b]$ and the number of appearance of $i$ in $\calS$ is even. Then \begin{equation}
            \mathbb{E}_{x_i\sim\calN(0,I_p)} \qth{ x_{i_1} x_{i_1'}^\transp A_1 x_{i_2} x_{i_2'}^\transp A_2 \cdots A_{m-1} x_{i_m} x_{i_m'}^\transp} = \sum_{i=1}^{N_m} \prod_{k=1}^{n_i} \trace{M_{ik}}M_{i0}, \label{eq:normalProduct}
        \end{equation}
        where $N_m$ and $n_i$ are constants depending on $m$ (and independent of $b$) and $M_{ik}, k \in \fromto{0}{n_i}, i\in [N_m]$ are multiplicative terms of parameter matrices $\sth{A_j, j\in[m-1] }$ and constant matrix $\{I_p\}$. Furthermore, for every $i\in [N_m]$, we have $\sum_{k=0}^{n_i} \deg(A_j; M_{ik}) = 1, j\in [m-1] $ and therefore $\sum_{k=0}^{n_i} \deg \pth{M_{ik}} = m-1$.
    \end{lemma}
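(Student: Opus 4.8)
The plan is to first collapse the whole matrix product into a single rank-one matrix times a product of scalars, and then evaluate the resulting Gaussian moment by Isserlis' (Wick's) theorem. The crucial observation is that each inner factor $x_{i_j'}^\transp A_j x_{i_{j+1}}$ is a scalar, so the product telescopes:
\[
 x_{i_1} x_{i_1'}^\transp A_1 x_{i_2} \cdots A_{m-1} x_{i_m} x_{i_m'}^\transp = \pth{\prod_{j=1}^{m-1} x_{i_j'}^\transp A_j x_{i_{j+1}}} x_{i_1} x_{i_m'}^\transp .
\]
Taking the $(a,c)$ entry and expanding each quadratic form $x_{i_j'}^\transp A_j x_{i_{j+1}} = \sum_{u,v}(x_{i_j'})_u (A_j)_{uv}(x_{i_{j+1}})_v$ into coordinates turns the left-hand side of \eqref{eq:normalProduct} into a constant combination (the constants being products of the $m-1$ entries of the $A_j$) of expectations of degree-$2m$ monomials in the coordinates of the independent standard normal vectors $x_1,\dots,x_b$.

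Next I would apply Isserlis' theorem to each such Gaussian moment. Because the samples are independent and standard normal, $\mathbb{E}\qth{(x_\ell)_a (x_{\ell'})_c} = \mathbbm{1}\sth{\ell = \ell'}\,\mathbbm{1}\sth{a=c}$, so the moment equals the sum over perfect matchings of the $2m$ vector occurrences indexed by $\calS$ of products of these covariances, and only matchings that pair occurrences carrying the same sample index survive. The hypothesis that every index appears an even number of times is exactly what guarantees that index-respecting matchings exist (and, were some index to appear an odd number of times, the moment would vanish and the claimed sum would simply be empty). The number of matchings is at most $(2m-1)!!$, a constant depending only on $m$; this is the source of the claim that $N_m$ and the $n_i$ are independent of $b$, since the surviving contractions merely enforce equalities among coordinate indices and never introduce a factor of $b$.

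Finally, for each surviving matching I would read off the resulting term. Each delta $\mathbbm{1}\sth{a=c}$ identifies two coordinate indices; performing all these identifications stitches the entries of the $A_j$ together along the chain $x_{i_1}[\cdots]x_{i_m'}^\transp$. The connected component containing the two open ends (the row index inherited from $x_{i_1}$ and the column index from $x_{i_m'}^\transp$) assembles into a single multiplicative term $M_{i0}$ in the $A_j$ and $I_p$, while every other (closed) component contracts to a trace $\trace{M_{ik}}$; summing over the remaining free indices produces exactly $\prod_{k=1}^{n_i}\trace{M_{ik}} M_{i0}$. Since each $A_j$ occurs once on the left-hand side, it occurs once across $\sth{M_{ik}}_{k}$, giving $\sum_{k=0}^{n_i}\deg(A_j; M_{ik})=1$ and hence $\sum_{k=0}^{n_i}\deg(M_{ik}) = m-1$. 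The main obstacle is precisely the bookkeeping of this last step: one must argue carefully that every index-respecting contraction collapses to the claimed normal form (one matrix times a product of traces) and that the per-matrix and total degrees come out as stated. An alternative, fully inductive route that sidesteps explicit matchings is to pull $x_{i_1}$ out by Gaussian integration by parts, replacing it by its covariance with one of the other occurrences of its index, which removes two vector occurrences, lowers $m$ by one, and lets the induction hypothesis (this very lemma, or Lemma \ref{lemma:4} once a single index remains) finish the argument.
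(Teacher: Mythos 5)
Your proposal is correct, but it proves the lemma by a genuinely different route than the paper. The paper's proof is an induction on the number of \emph{distinct} sample indices appearing in $\calS$: it repeatedly commutes the scalar factors $x_{i_j'}^\transp A_j x_{i_{j+1}}$ so that all occurrences of one chosen sample, say $x_1$, are gathered into consecutive blocks $x_1 x_1^\transp \tilde A_1 x_1 x_1^\transp \cdots$, integrates out $x_1$ alone using the single-sample moment formula of Lemma \ref{lemma:4} (which is cited from Magnus, not proved), restores the normal form ``one matrix times a product of traces'' via Lemma \ref{lemma:rewriteTrace}, and then recurses on the remaining samples. Your Isserlis/Wick argument instead evaluates the entire degree-$2m$ Gaussian moment in one shot as a sum over index-respecting perfect matchings; this subsumes Lemma \ref{lemma:4} as the special case of a single index rather than relying on it, and it makes the $b$-independence of $N_m$ immediate (the matchings number at most $(2m-1)!!$). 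The one step you flag as delicate---that every surviving contraction collapses to $\prod_k \trace{M_{ik}}\,M_{i0}$---closes cleanly: in the contraction graph whose vertices are the $2m$ coordinate slots, each slot has degree two (one edge from its matrix $A_j$, one from its matching pair) except the two open slots coming from $x_{i_1}$ and $x_{i_m'}^\transp$, which have degree one; hence the graph is a disjoint union of exactly one path (yielding $M_{i0}$, with transposes of the $A_j$ appearing when an edge is traversed backwards, consistent with the definition of a multiplicative term) and cycles (yielding the traces), and since each $A_j$ is a single edge the degree counts $\sum_{k}\deg(A_j;M_{ik})=1$ follow. Your approach is more self-contained and arguably more transparent; the paper's buys direct reuse of the cited moment formula and of its multiplicative-term bookkeeping machinery.
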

    
    \begin{proof}
        Let $\beta_i, i \in [b]$ be the number of appearances of $i$ in $\calS$, which are even by assumption. We induct on the quantity $N = \sum_{i=1}^b \mathbbm{1}\sth{\beta_i \neq 0}$.
    
        For the base case of $N = 1$, all elements in the multi-set $\calS$ have the same value. Without loss of generality, we assume $i_j = i_j' = 1, j\in [m]$. Then $$ \mathbb{E}_{x_i\sim\calN(0,I_p)} \qth{x_{i_1} x_{i_1'}^\transp A_1 x_{i_2} x_{i_2'}^\transp \cdots A_{m-1} x_{i_m} x_{i_m'}^\transp} = \mathbb{E}_{x_1\sim\calN(0,I_p)} \qth{x_1 x_1^\transp A_1 x_1 x_1^\transp \cdots A_{m-1} x_1 x_1^\transp}, $$ which is the statement of Lemma \ref{lemma:4}.
        
        Suppose the statement holds for $N \ge 1$, and we consider the case of $N + 1$. Note that $x_{i_j'}^\transp A_j x_{i_{j+1}} = x_{i_{j+1}}^\transp A_j x_{i_{j}'}$ is a scalar so that we can move it around without changing the value of the expression\footnote{For example, we can rewrite \begin{align*}
            x_{i_1} x_{i_1'}^\transp A_1 x_{i_2} x_{i_2'}^\transp A_2 x_{i_3} x_{i_3'}^\transp &= x_{i_1} \pth{x_{i_1'}^\transp A_1 x_{i_2}} \qth{x_{i_2'}^\transp A_2 x_{i_3}} x_{i_3'}^\transp = x_{i_1} \qth{x_{i_2'}^\transp A_2 x_{i_3}} \pth{x_{i_1'}^\transp A_1 x_{i_2}} x_{i_3'}^\transp \\
            &= x_{i_1} \qth{x_{i_2'}^\transp \pth{x_{i_1'}^\transp A_1 x_{i_2}} A_2 x_{i_3}} x_{i_3'}^\transp = x_{i_1} \qth{x_{i_2'}^\transp A_2 \pth{x_{i_1'}^\transp A_1 x_{i_2}} x_{i_3}} x_{i_3'}^\transp.
        \end{align*}}. We distinguish two cases.
        \begin{itemize}
            \item Let $i_1 \neq i_m'$. Without loss of generality, we assume $i_1 = 1$. We can always change the order of $x_{i_j'}^\transp A_j x_{i_{j+1}}, j \in [m-1]$ (and flip it to be $x_{i_{j+1}}^\transp A_j x_{i_j'}$ if necessary) such that all $x_1$'s appear in the form of $x_1 x_1^\transp$: \begin{align} \label{eq:3.10}
                 x_{i_1} x_{i_1'}^\transp A_1 x_{i_2} x_{i_2'}^\transp A_2 \cdots A_{m-1} x_{i_m} x_{i_m'}^\transp &= x_1 \pth{x_{i_1'}^\transp A_1 x_{i_2} x_{i_2'}^\transp A_2 \cdots A_{m-1} x_{i_m}} x_{i_m'}^\transp \notag \\
                 &= x_1 x_1^\transp \tilde{A}_1 x_1 x_1^\transp \tilde{A}_2 \cdots \tilde{A}_{\frac{\beta_1}{2}-1} x_1 x_1^\transp \tilde{A}_{\frac{\beta_1}{2}} \tilde{x} x_{i_m'}^\transp \notag
            \end{align} where $\tilde{x} \in \sth{x_i, i\in[b]}, \tilde{x} \neq x_1$ and $\tilde{A}_i$'s are multiplicative terms of parameter matrices $\{x_u x_v^\transp: u,v \in \fromto{2}{b}\} \cup \sth{A_j: j \in [m-1]}$ and constant matrix $\sth{I_p}$ such that $\sum_{u,v\in \fromto{2}{b}} \sum_{k=1}^{\frac{\beta_1}{2}} \deg(x_u x_v^\transp; \tilde{A}_k) = m - \frac{\beta_1}{2} - 1$ and $\sum_{k=1}^{\frac{\beta_1}{2}} \deg(A_j; \tilde{A}_k) = 1, j \in [m-1]$\footnote{For example, we can rewrite \begin{align*}
                &x_1 x_2^\transp A_1 x_1 x_1^\transp A_2 x_3 x_3^\transp A_3 x_1 x_2 = x_1 \pth{x_2^\transp A_1 x_1} \qth{x_1^\transp A_2 x_3} \sth{x_3^\transp A_3 x_1} x_2 = x_1 \pth{x_1^\transp A_1 x_2} \qth{x_3^\transp A_2 x_1} \sth{x_1^\transp A_3 x_3} x_2 \\
                =& x_1 x_1^\transp A_1 x_2 x_3^\transp A_2 x_1 x_1^\transp A_3 x_3 x_2  = x_1 x_1^\transp \tilde{A}_1 x_1 x_1^\transp \tilde{A}_2 \tilde{x} x_2,
            \end{align*}
            where $\tilde{A}_1 = A_1 x_2 x_3^\transp A_2, \tilde{A}_2 = A_3$ and $\tilde{x} = x_3$. Besides, $m = 4, \beta_1 = 4$, thus the degree of $x_u x_v^\transp$ in all $\tilde{A}_k$ sum up to $m - \frac{\beta_1}{2} - 1 = 1$}.

            Applying Lemma \ref{lemma:4} and the law of iterative expectations, we have \begin{align*}
                \mathbb{E}_{x_i\sim\calN(0,I_p)} \qth{x_{i_1} x_{i_1'}^\transp A_1 x_{i_2} x_{i_2'}^\transp \cdots A_{m-1} x_{i_m} x_{i_m'}^\transp} &= \mathbb{E}_{x_1,\cdots, x_b} \qth{x_1 x_1^\transp \tilde{A}_1 x_1 x_1^\transp \tilde{A}_2 \cdots \tilde{A}_{\frac{\beta_1}{2}-1} x_1 x_1^\transp \tilde{A}_{\frac{\beta_1}{2}} \tilde{x} x_{i_m'}^\transp} \\
                &= \mathbb{E}_{x_2, \cdots, x_b} \qth{\pth{\sum_{i=1}^{N_m} \prod_{k=1}^{n_i} \trace{M_{ik}}M_{i0}} \tilde{A}_{\frac{\beta_1}{2}} \tilde{x} x_{i_m'}^\transp} \\
                &= \sum_{i=1}^{N_m} \mathbb{E}_{x_2, \cdots, x_b} \qth{\pth{\prod_{k=1}^{n_i} \trace{M_{ik}}M_{i0}} \tilde{A}_{\frac{\beta_1}{2}} \tilde{x} x_{i_m'}^\transp},
            \end{align*}
            where $N_m$ and $n_i$ are constant depending on $m$ (and independent of $b$) and $M_{ik}, k \in \fromto{0}{n_i}, i\in [N_m]$ are multiplicative terms of parameter matrices $\sth{\tilde{A}_j, j\in[\frac{\beta_1}{2}-1] }$ and constant matrix $\{I_p\}$. Furthermore, for every $i\in [N_m]$, we have $\sum_{k=0}^{n_i} \deg(\tilde{A}_j; M_{ik}) = 1, j\in [\frac{\beta_1}{2}-1] $ and therefore $\sum_{k=0}^{n_i} \deg \pth{M_{ik}} = \frac{\beta_1}{2}-1$. 
            
            Combining the definition of $\tilde{A}_j$'s, we know that $M_{ik}, k \in \fromto{0}{n_i}, i\in [N_m]$ are multiplicative terms of parameter matrices $\{x_u x_v^\transp: u,v \in \fromto{2}{b}\} \cup \sth{A_j: j \in [m-1]}$ and constant matrix $\sth{I_p}$ such that for every $i\in [N_m]$, we have $\sum_{u,v\in \fromto{2}{b}} \sum_{k=0}^{n_i} \deg(x_u x_v^\transp; M_{ik}) = m - \frac{\beta_1}{2} - 1$ and $\sum_{k=0}^{n_i} \deg(A_j; M_{ik}) = 1, j \in [m-1]$. 
            
            Applying Lemma \ref{lemma:rewriteTrace}, for every $ k \in \fromto{0}{n_i}$ and every $ i\in [N_m]$, there exists $u_{ik}, v_{ik} \in \sth{x_j: j \in \fromto{2}{b}}$ and multiplicative term $M_{ik}'$ of parameter matrices $\{x_u x_v^\transp: u,v \in \fromto{2}{b}\} \cup \sth{A_j: j \in [m-1]}$ and constant matrix $\sth{I_p}$ such that $$\trace{M_{ik}} = u_{ik}^\transp M_{ik}' v_{ik}.$$ 
            
            Therefore, we have \begin{align*}
                \pth{\prod_{k=1}^{n_i} \trace{M_{ik}}M_{i0}} \tilde{A}_{\frac{\beta_1}{2}} \tilde{x} x_{i_m'}^\transp = \prod_{k=1}^{n_i} \pth{u_{ik}^\transp M_{ik}' v_{ik}} M_{i0} \tilde{A}_{\frac{\beta_1}{2}} \tilde{x} x_{i_m'}^\transp = M_{i0} \tilde{A}_{\frac{\beta_1}{2}} \tilde{x} \prod_{k=1}^{n_i} \pth{u_{ik}^\transp M_{ik}' v_{ik}} x_{i_m'}^\transp \triangleq U_i.
            \end{align*}
            
            Note that for every $i \in [N_m]$, we have \begin{align*}
                \sum_{j=1}^{m-1} \deg(x_i; A_j) = \sum_{k=1}^{n_i} \deg(x_i; M_{ik}') + \deg(x_i; M_{i0}) + \deg\pth{x_i; \tilde{A}_{\frac{\beta_1}{2}}} + \deg(x_i; \tilde{x}) + \deg\pth{x_i; x_{i_m'}^\transp},
            \end{align*} and for every $j\in [m-1]$, we have \begin{align*}
                \sum_{k=1}^{n_i} \deg(A_j; M_{ik}') + \deg(A_j; M_{i0}) + \deg\pth{A_j; \tilde{A}_{\frac{\beta_1}{2}}} = 1.
            \end{align*}
            
            In other words, for every $i \in [N_m]$, $U_i$ has the form of $\hat{A}_0 x_{\hat{i}_1} x_{\hat{i}_1'}^\transp \hat{A}_1 x_{\hat{i}_2} x_{\hat{i}_2'}^\transp \cdots \hat{A}_{m-1} x_{\hat{i}_{m'}} x_{i_{m}'}^\transp \hat{A}_{m'}$ but there is no appearance of $x_1$. Here $x_{\hat{i}_j}, x_{\hat{i}_j} \in \sth{x_j, j \in \fromto{2}{b}}$, and $\hat{A}_i, i\in \fromto{0}{m}$ are multiplicative terms of parameter matrices $\sth{A_j, j\in[m-1] }$ and constant matrix $\{I_p\}$. Furthermore, for every $j \in [m-1]$, we have $\sum_{k=0}^{n_i} \deg(A_j; \hat{A}_i) = 1$. Note that here we use the liberty of adding identity matrices if more than two consecutive $x$'s appear. Since we have reduced $N+1$ by one, we can use induction on $x_{\hat{i}_1} x_{\hat{i}_1'}^\transp \hat{A}_1 x_{\hat{i}_2} x_{\hat{i}_2'}^\transp \cdots \hat{A}_{m-1} x_{\hat{i}_{m'}} x_{i_{m}'}^\transp$ and finish the proof. The two constant matrices $\hat{A}_0$ and $\hat{A}_m$ do not change the result of expectation since $\mathbb{E}\pth{\hat{A}_0 X \hat{A}_{m'}}=\hat{A}_0 \Expect(X) \hat{A}_{m'}$. 
            
            \item If $i_1 = i_m'$, without loss of generality we assume, $i_1' = 1$ and $i_1' \neq i_1$ (note that all $x_{i_j'}^\transp A_j x_{i_{j+1}}, j \in [m-1]$ are inter-changeable and there is at least one element in $\calS$ that is not equal to $i_1$). We change the orders of $x_{i_j'}^\transp A_j x_{i_{j+1}}, j \in [m-1]$ (and flip it to be $x_{i_{j+1}}^\transp A_j x_{i_j'}$ if necessary) such that all $x_1$'s appear in a consecutive form of $x_1 x_1^\transp$: \begin{align*}
                x_{i_1} x_{i_1'}^\transp A_1 x_{i_2} x_{i_2'}^\transp A_2 \cdots A_{m-1} x_{i_m} x_{i_m'}^\transp &= x_{i_1} \pth{x_{i_1'}^\transp A_1 x_{i_2} x_{i_2'}^\transp A_2 \cdots A_{m-1} x_{i_m}} x_{i_m'}^\transp \\
                &= x_{i_1} \pth{\tilde{x}_1^\transp \tilde{A}_0 \qth{x_1 x_1^\transp \tilde{A}_1 \cdots \tilde{A}_{\frac{\beta_1}{2} - 1} x_1 x_1^\transp} \tilde{A}_{\frac{\beta_1}{2}} \tilde{x}_2} x_{i_m'}^\transp,
            \end{align*} where $ \tilde{x}_1, \tilde{x}_2 \in \sth{x_i, i\in[b]}, \tilde{x}_1, \tilde{x}_2 \neq x_1$ and $\tilde{A}_i$'s are multiplicative terms of parameter matrices $\{x_u x_v^\transp: u,v \in \fromto{2}{b}\} \cup \sth{A_j: j \in [m-1]}$ and constant matrix $\sth{I_p}$ such that $$\sum_{u,v\in \fromto{2}{b}} \sum_{k=0}^{\frac{\beta_1}{2}} \deg(x_u x_v^\transp; \tilde{A}_k) = m - \frac{\beta_1}{2} - 2$$ and $\sum_{k=0}^{\frac{\beta_1}{2}} \deg(A_j; \tilde{A}_k) = 1, j \in [m-1]$. The remaining reasoning is the same as the previous case.
        \end{itemize}
    \end{proof}
    
    \begin{remark*}
        If one of the $\beta_i$ numbers of appearance of $x_j, j\in[b]$ is odd, then it is easy to see that the result in \eqref{eq:normalProduct} is the zero matrix.
    \end{remark*}
    
        \begin{proof}[Proof of Lemma \ref{lemma:GtoW}]
        By \eqref{eq:gt1} and \eqref{eq:gt2} we have
        \begin{align} 
            M = \prod_{i=1}^m \trace{M_i} M_0 = \frac{1}{b^d}\sum_{k=1}^{b^d} \prod_{i=1}^m \trace{M_{ki}} M_{k0},
        \end{align}
        where each $M_{ki}, k\in [b^d], i \in \fromto{0}{m}$ is a multiplicative term of parameter matrices $\sth{x_{t,i}x_{t,i}^\transp, i\in [b]}$ and constant matrices $\sth{W_{t,1}^b, W_{t,2}^b, \calW_t^b}$. Let $\widetilde{M}_k = \prod_{i=1}^m \trace{M_{ki}} M_{k0}, k\in \qth{b^d}$. We split set $\sth{\widetilde{M}_k: k\in \qth{b^d}}$ into disjoint and non-empty sets (equivalent classes) $S_1, \ldots, S_{n_M}$ such that \begin{enumerate}
            \item for every $i \in [n_M]$ and every $M_1, M_2 \in S_i$, we have $ \condexp{}{M_1}{\calF_t^b} = \condexp{}{M_2}{\calF_t^b} $,
            \item for every $i, j \in [n_M], i \neq j$ and every $M_1 \in S_i \textrm{ and } M_2 \in S_j$, we have $\condexp{}{M_1}{\calF_t^b} \neq \condexp{}{M_2}{\calF_t^b} $.
        \end{enumerate}

        Note that $\cup_{i=1}^{n_M} S_i = \sth{\widetilde{M}_k: k\in \qth{b^d}}$. Let $\hat{M}_k \in S_k$ represent the equivalent class $S_k$ (it can be any member of $S_k$). For every $i \in [n_M]$, we can always write $|S_i| = e_{i,0} + e_{i,1} b + \cdots + e_{i,d} b^{d}$ such that $e_{i,j} \in \mathbb{N}, e_{i,j} < b, j\in\fromto{0}{d}$ (actually $e_{i,j}$'s are the digits of the base-$b$ representation of $|S_i|$). Then we have \begin{align} 
            \condexp{}{M}{\calF_t^b} &= \condexp{}{\frac{1}{b^d}\sum_{k=1}^{b^d} \widetilde{M}_k}{\calF_t^b} = \frac{1}{b^d} \condexp{}{\sum_{i=1}^{n_M} \pth{e_{i,0} + e_{i,1} b + \cdots + e_{i,d} b^{d}} \hat{M}_i }{\calF_t^b}\notag\\ 
            &= \frac{1}{b^d}\sum_{i=1}^{n_M} \pth{e_{i,0} + e_{i,1} b + \cdots + e_{i,d} b^{d}} \condexp{}{\hat{M}_i }{\calF_t^b} \label{eq:2.5} \\
            &= \sum_{i=1}^{n_M} \pth{e_{i,d} + e_{i,d-1} \frac{1}{b} + \cdots + e_{i,0} \frac{1}{b^d} } \condexp{}{\hat{M}_i }{\calF_t^b}. \notag
        \end{align}
        It is important to note that $n_M$, the number of different equivalent classes, is independent of $b$. This follows from the fact that each $\condexp{}{\tilde{M}_k}{\calF_t^b}$ (and so as $\condexp{}{\hat{M}_k}{\calF_t^b}$) includes a finite number of weight matrices $W_{t,1}^b$ and $W_{t,2}^b$ with degree less than or equal to $3d + \sum_{i=0}^m \pth{\deg\pth{W_{t,1}^b; M_i} + \deg(W_{t,2}^b; M_i)}$ (see Lemma \ref{lemma:normalProperty}). Thus the number of partition sets is bounded by a quantity independent of $b$. 
        
        Note that each $M_{ki}$ can be represented as $$M_{ki} = A_0^{ki} x_{t,i_1}^{ki} {x_{t,i_1}^{ki}}^\transp A_1^{ki} \cdots A_{d_i-1}^{ki} x_{t,i_{d_i}}^{ki} {x_{t,i_{d_i}}^{ki}}^\transp A_{d_i}^{ki} $$ for some matrices $A_0^{ki}, \ldots, A_{d_i}^{ki} $ that are multiplicative term of parameter matrices $\sth{W_{t,1}^b, W_{t,2}^b and \calW_t^b}$ constant matrix $\sth{I_p}$ (we stress again that some $A$ matrices can be identities, based on the definition of multiplicative terms), and $x_{t,i_1}^{ki}, \ldots, x_{t,i_{d_i}}^{ki} \in \sth{x_{t,1}, \ldots, x_{t,b}}$. We have \begin{align*}
            \trace{M_{ki}} &= \trace{ A_0^{ki} x_{t,i_1}^{ki} {x_{t,i_1}^{ki}}^\transp A_1^{ki} \cdots A_{d_i-1}^{ki} x_{t,i_{d_i}}^{ki} {x_{t,i_{d_i}}^{ki}}^\transp A_{d_i}^{ki} }\\
            &= {x_{t,i_{d_i}}^{ki}}^\transp A_{d_i}^{ki} A_0^{ki} x_{t,i_1}^{ki} {x_{t,i_1}^{ki}}^\transp A_1^{ki} \cdots A_{d_i-1}^{ki} x_{t,i_{d_i}}^{ki}. 
        \end{align*}
        For every $k \in \qth{b^d}$, we have \begin{align*}
                \prod_{i=1}^m \trace{M_{ki}} M_{k0} &= \qth{\prod_{i=1}^m {x_{t,i_{d_i}}^{ki}}^\transp A_{d_i}^{ki} A_0^{ki} x_{t,i_1}^{ki} {x_{t,i_1}^{ki}}^\transp A_1^{ki} \cdots A_{d_i-1}^{ki} x_{t,i_{d_i}}^{ki}} A_0^{k0} x_{t,i_1}^{k0} {x_{t,i_1}^{k0}}^\transp A_1^{k0} \cdots A_{d_0-1}^{k0} x_{t,i_{d_0}}^{k0} {x_{t,i_{d_0}}^{k0}}^\transp A_{d_0}^{k0}\\
                &= \qth{\prod_{i=1}^m {x_{t,i_{d_i}}^{ki}}^\transp A_{d_i}^{ki} A_0^{ki} x_{t,i_1}^{ki} {x_{t,i_1}^{ki}}^\transp A_1^{ki} \cdots A_{d_i-1}^{ki} x_{t,i_{d_i}}^{ki}} \qth{{x_{t,i_1}^{k0}}^\transp A_1^{k0} \cdots A_{d_0-1}^{k0} x_{t,i_{d_0}}^{k0} } A_0^{k0} x_{t,i_1}^{k0} {x_{t,i_{d_0}}^{k0}}^\transp A_{d_0}^{k0},
        \end{align*}
        which can be rewritten as $$ \widetilde{M}_k =  \prod_{i=1}^m \trace{M_{ki}} M_{k0} = \pth{\prod_{j=1}^d x_{t, \bar{i}_j}^\transp A_j^k x_{t, \bar{i}_j'}} A_0^{k0} x_{t,i_1}^{k0} {x_{t,i_{d_0}}^{k0}}^\transp A_{d_0}^{k0}. $$

        Note that the randomness of each $\tilde{M}_k$ given $\calF_t^b$ only comes from the randomness of $x_{t,j}$'s, i.e. for all $k \in \qth{b^d} $ we have \begin{align}
            \condexp{}{\tilde{M}_k}{\calF_t^b} &= \mathbb{E}_{x_{t,j} \sim \calN(0, I)} \qth{\pth{\prod_{j=1}^d x_{t, i_j}^\transp A_j^k x_{t, i_j'}} A_0^k x_{t, i_0'}x_{t, i_0}^\transp {A_0^k}'}\notag \\
            &=\mathbb{E}_{x_{t,j} \sim \calN(0, I)} \qth{ A_0^k x_{t, i_0'} \pth{\prod_{j=1}^d x_{t, i_j}^\transp A_j^k x_{t, i_j'}} x_{t, i_0}^\transp {A_0^k}'} \label{eq:2.6}\\
            &= \sum_{i=1}^{n_M^k} \prod_{j=1}^{n_i^k} \trace{\tilde{M}_{ij}^k} \tilde{M}_{i0}^k, \notag
        \end{align}
        where the last equation comes from Lemma \ref{lemma:normalProperty}. Here $n_M^k, n_i^k, i \in \qth{n_M^k}, k \in \qth{b^d}$ are constants independent of $b$, $M_{ij}^k$'s are multiplicative terms of parameter matrices  $\sth{W_{t,1}^b, W_{t,2}^b, \calW_t^b}$ and constant matrix $\sth{I_p}$ such that for every $i \in \qth{n_M^k}$, we have \begin{equation} \label{eq:3.15}
            \sum_{j=0}^{n_i^k} \deg\pth{\calW_t^b; \tilde{M}_{ij}^k} = d
        \end{equation} and \begin{equation} \label{eq:3.16}
            \sum_{j=0}^{n_i^k} \pth{\deg\pth{W_{t,1}^b; \tilde{M}_{ij}^k} + \deg\pth{W_{t,2}^b; \tilde{M}_{ij}^k}} = d + \sum_{r=0}^m \pth{\deg\pth{W_{t,1}^b; M_r} + \deg(W_{t,2}^b; M_r)}.
        \end{equation}
        
        These degree relationships can be observed from \eqref{eq:gt1}, \eqref{eq:gt2}, and the fact that each $g_{t,1}^b$ or $g_{t,1}^b$ contributes one $\calW_t^b$ and one of $W_{t,1}^b$ or $W_{t,2}^b$ in $\prod_{j=1}^{n_i^k} \trace{\tilde{M}_{ij}^k} \tilde{M}_{i0}^k$. Note that $\calW_t = W_{t,2}^bW_{t,2}^b - W_2^*W_1^*$. For every $i \in \qth{n_M^k}$, if we replace all appearances of $\calW_t^b$ in $\prod_{j=1}^{n_i^k} \trace{\tilde{M}_{ij}^k}\tilde{M}_{i0}^k$ and expand all parentheses of $\pth{W_{t,2}^bW_{t,2}^b - W_2^*W_1^*}$, we have \begin{align} \label{eq:3.17}
            \prod_{j=1}^{n_i^k} \trace{\tilde{M}_{ij}^k} \tilde{M}_{i0}^k = \sum_{l=1}^{2^d} \prod_{j=1}^{n_i^k} \trace{\tilde{M}_{ij}^{kl}} \tilde{M}_{i0}^{kl},
        \end{align}
        where $\tilde{M}_{ij}^{kl}$'s are multiplicative terms of parameter matrices $\sth{W_{t,1}^b, W_{t,2}^b}$ and constant matrices $\sth{W_1^*, W_2^*}$ such that \begin{equation} \label{eq:3.18}
            \sum_{j=0}^{n_i^k} \pth{\deg\pth{W_{t,1}^b; \tilde{M}_{ij}^{kl}} + \deg\pth{W_{t,2}^b; \tilde{M}_{ij}^{kl}}} \le 3d + \sum_{r=0}^m \pth{\deg\pth{W_{t,1}^b; M_r} + \deg(W_{t,2}^b; M_r)},
        \end{equation}
        where the inequality comes from \eqref{eq:3.15} and \eqref{eq:3.16} and the fact that each $g_{t,1}^b$ or $g_{t,2}^b$ contributes 2 or 0 degrees in the form of $W_{t,2}^bW_{t,1}^b$ or $W_2^*W_1^*$, respectively.
        
        Combining \eqref{eq:2.5}, \eqref{eq:2.6} and \eqref{eq:3.17}, we have \begin{align*}
            \condexp{}{M}{\calF_t^b} &= \sum_{k=1}^{n_M} \pth{e_{k,d} + e_{k,d-1} \frac{1}{b} + \cdots + e_{k,0} \frac{1}{b^d} } \condexp{}{\hat{M}_k }{\calF_t^b} \\
            &= \sum_{k=1}^{n_M} \pth{e_{k,d} + e_{k,d-1} \frac{1}{b} + \cdots + e_{k,0} \frac{1}{b^d} } \sum_{i=1}^{n_M^{s_k}} \sum_{l=1}^{2^d} \prod_{j=1}^{n_i^k} \trace{\tilde{M}_{ij}^{kl}} \tilde{M}_{i0}^{kl} \\
            &= N_0 + N_1\frac{1}{b} + \cdots + N_d \frac{1}{b^d},
        \end{align*}
        where \begin{equation} \label{eq:3.19}
            N_r = \sum_{k=1}^{n_M} e_{k,d-r} \pth{\sum_{i=1}^{n_M^{s_k}} \sum_{l=1}^{2^d} \prod_{j=1}^{n_i^k} \trace{\tilde{M}_{ij}^{kl}} \tilde{M}_{i0}^{kl}}.
        \end{equation}
        
        Note that all constants in \eqref{eq:3.19} are independent of $b$ and combining with \eqref{eq:3.18}, we have finished the proof.

    \end{proof}

    \begin{proof}[Proof of Lemma \ref{lemma:WtoG}]
        Simply using the fact that $W_{t, i}^b = W_{t-1, i}^b - \alpha_t g_{t-1, i}^b, i = 1, 2$, if we replace each $W_{t,i}^b$ in the left-hand-side of (\ref{eq:3.19}) by $W_{t-1, i}^b - \alpha_t g_{t-1, i}^b$ and expand all the parentheses, then each $M_i, i \in \fromto{0}{m}$ becomes the sum of $2^{d_i}$ multiplicative terms of parameter matrices $\sth{g_{t,1}^b, g_{t,2}^b}$ and constant matrices $\sth{W_{t,1}^b, W_{t,2}^b, W_1^*, W_2^*}$ with degree at most $d_i$. As a result, $\prod_{i=1}^m \trace{M_i} M_0$ becomes the sum of $2^d$ terms in the form of $\prod_{i=1}^{m} \trace{M_{ik}} M_{0k}$ where $\deg\pth{M_{ik}} \le 2^{d_i}$, and therefore $\sum_{i=0}^m \deg\pth{M_{ik}} \le \prod_{i=0}^m 2^{d_i} = d$.
    \end{proof}

    \begin{proof}[Proof of Theorem \ref{thm:representation}]
        We use induction on $t$ to show this result. The base case of $t = 0$ it is the same as the statement in Lemma \ref{lemma:GtoW}.
        
        Suppose that the statement holds for $t \ge 0$, and we consider the case of $t+1$. By Lemma \ref{lemma:GtoW}, there exists a set of multiplicative terms $\sth{M_{t+1,i,j}^k, i\in [m_{t+1,k}], j \in \fromto{0}{m_{t+1,k,i}}, k \in \fromto{0}{d}}$ of parameter matrices $\sth{W_{t+1,1}^b, W_{t+1,2}^b}$ and constant matrices $\sth{W_1^*, W_2^*}$ such that \begin{align} \label{eq:3.20}
                \condexp{}{M}{\calF_{t+1}^b} = N_{t+1,0} + N_{t+1,1} \frac{1}{b} + \cdots + N_{t+1,d} \frac{1}{b^d},
            \end{align}
        where $N_{t+1,k} = \sum_{i=1}^{m_{t+1,k}} \prod_{j=1}^{m_{t+1,k,i}} \trace{M_{t+1,i,j}^k} M_{t+1,i,0}^k, k\in \fromto{0}{d}$. Here $m_{t+1,k}, m_{t+1,k,i}$ are constants independent of $b$, and $\sum_{j=0}^{m_{t+1,k,i}} \deg\pth{M_{t+1,i,j}^k} \le 3d + d'$. 
        
        For each $i \in \qth{m_{t+1, k}}$ and each $k \in \fromto{0}{d}$, by Lemma \ref{lemma:WtoG}, there exists a set of multiplicative terms $\{M_{t,i,j,k,l}, j\in\qth{m_{t+1,i,k}}, l \in [d_{t,i,k}]\}$ of parameter matrices $\sth{g_{t,1}^b, g_{t,2}^b}$ and constant matrices $\sth{W_{t,1}^b, W_{t,2}^b, W_1^*, W_2^*}$ such that \begin{align} \label{eq:3.21}
                \prod_{j=1}^{m_{t+1,k,i}} \trace{M_{t+1,i,j}^k} M_{t+1,i,0}^k = \sum_{l=1}^{d_{t,i,k}} \prod_{j=1}^{m_{t+1,k,i}} \trace{M_{t,i,j,k,l}} M_{t,i,0,k,l},
            \end{align}
            where $d_{t,i,k} = 2^{\sum_{j=0}^{m_{t+1,k,i}} \pth{\deg\pth{W_{t,1}^b; M_{t,i,j,k,l}} + \deg(W_{t,2}^b; M_{t,i,j,k,l})}}$ is a constant independent of $b$ and \begin{equation} \label{eq:3.23}
                \sum_{j=0}^{m_{t+1,k,i}} \deg \pth{M_{t,i,j,k,l}} \le 3d + d', 
            \end{equation}
            and \begin{equation} \label{eq:3.24}
                \sum_{j=0}^{m_{t+1,k,i}} \pth{\deg \pth{W_{t,1}; M_{t,i,j,k,l}} + \deg \pth{W_{t,2}; M_{t,i,j,k,l}}} \le 3d + d'.
            \end{equation}
            
        Combining \eqref{eq:3.20} and \eqref{eq:3.21}, we have for every $k \in \fromto{0}{d}$ \begin{equation}
            N_{t+1, k} = \sum_{i=1}^{m_{t+1,k}} \sum_{l=1}^{d_{t,i,k}} \prod_{j=1}^{m_{t+1,k,i}} \trace{M_{t,i,j,k,l}} M_{t,i,0,k,l}.
        \end{equation}
            
        Note that \begin{align}
            \condexp{}{M}{\calF_0} =&\; \condexp{}{\condexp{}{M}{\calF_{t+1}^b}}{\calF_0} =  \condexp{}{N_{t+1,0}}{\calF_0} + \condexp{}{N_{t+1,1}}{\calF_0} \frac{1}{b} + \cdots + \condexp{}{N_{t+1,d}}{\calF_0} \frac{1}{b^d} \notag \\
            =&\; \sum_{i=1}^{m_{t+1,0}} \sum_{l=1}^{d_{t,i,0}} \condexp{}{\prod_{j=1}^{m_{t+1,0,i}} \trace{M_{t,i,j,0,l}} M_{t,i,0,0,l}}{\calF_0} +\notag \\
            +& \; \sum_{i=1}^{m_{t+1,1}} \sum_{l=1}^{d_{t,i,1}} \condexp{}{\prod_{j=1}^{m_{t+1,1,i}} \trace{M_{t,i,j,1,l}} M_{t,i,0,1,l}}{\calF_0} \frac{1}{b} + \cdots + \notag\\
            +& \; \sum_{i=1}^{m_{t+1,d}} \sum_{l=1}^{d_{t,i,d}} \condexp{}{\prod_{j=1}^{m_{t+1,d,i}} \trace{M_{t,i,j,d,l}} M_{t,i,0,d,l}}{\calF_0} \frac{1}{b^d}, \label{eq:3.25}
        \end{align}
        and each $M_{t,i,j,k,l}$ is a multiplicative term of parameter matrices $\left\{ g_{t,1}^b, g_{t,2}^b \right\}$ and constant matrices $\{W_{t,1}^b, W_{t,2}^b, W_1^*, W_2^*\}$ such that 
        the degree is at most $1$. Therefore, by induction, for every $i,k,l$, we have \begin{equation} \label{eq:3.26}
            \condexp{}{\prod_{j=1}^{m_{t+1,k,i}} \trace{M_{t,i,j,k,l}} M_{t,i,0,k,l}}{\calF_0} = N_{t,i,k,l,0} + N_{t,i,k,l,1} \frac{1}{b} + \cdots N_{t,i,k,l,q_t}\frac{1}{b^{q_t}},
        \end{equation} where $q_t \le d' + \frac{1}{2}(3^t-1)(3d+d')$ and $N_{t,i,k,l,0}, \cdots, N_{t,i,k,l,q_t}$ are sum of multiplicative terms of parameter matrices $\sth{W_{0,1}^b, W_{0,2}^b}$ and constant matrices $\sth{W_1^*, W_2^*}$ with degree at most $d\cdot 3^t$.
        
        Combining \eqref{eq:3.25} and \eqref{eq:3.26}, we can rewrite \begin{equation*}
            \condexp{}{M}{\calF_0} = N_0 + N_1 \frac{1}{b} + \cdots + N_q \frac{1}{b^q},
        \end{equation*}
        in the same form as in the statement. Here $q\le d + 3q_t \le \frac{1}{2}(3^{t+2}-1)d + \frac{1}{2}(3^{t+1}-1)d'$ and $\sum_{j=0}^{m_{ki}} \deg\pth{M_{ij}^k} \le 3 \times 3^{t} (3d + d') = 3^{t+1} (3d + d')$ follow from \eqref{eq:3.23} and \eqref{eq:3.24}.
        
        In conclusion, we have shown that the statement holds for $t+1$, and therefore finishes the proof.
            
    \end{proof}
    
    \begin{proof}[Proof of Corollary \ref{thm:representationoCorollary}]
        We simply note that $M$ can be written as the sum of at most $2^d$ multiplicative terms of parameter matrices $\sth{W_{t,1}^b, W_{t,2}^b, W_1^*, W_2^*}$ and constant matrix $\sth{I_0}$. Then we apply Lemmas \ref{lemma:GtoW} and \ref{lemma:WtoG} iteratively in the same way as in the proof of Theorem \ref{thm:representation} to finish the proof.
    \end{proof}

    \begin{proof}[Proof of Theorem \ref{thm:varianceRepresentation}]
        We only show the case for $g_{t,1}$ since the proof for $g_{t,2}$ can be tackled similarly. Note that \begin{align*}
            \condvar{}{g_{t,1}^b}{\calF_0} &= \condvar{}{\frac{1}{b} \sum_{i=1}^b {W_{t,2}^b}^\transp \calW_t^b x_{t,i} x_{t,i}^\transp }{\calF_0} = \frac{1}{b^2} \sum_{i=1}^b\condvar{}{{W_{t,2}^b}^\transp \calW_t^b x_{t,i} x_{t,i}^\transp }{\calF_0}\\
            &= \frac{1}{b} \condvar{}{{W_{t,2}^b}^\transp \calW_t^b x_{t,1} x_{t,1}^\transp }{\calF_0} \\
            &= \frac{1}{b}\pth{\condexp{}{\norm{{W_{t,2}^b}^\transp \calW_t^b x_{t,1} x_{t,1}^\transp}^2}{\calF_0} - \norm{\condexp{}{{W_{t,2}^b}^\transp \calW_t^b x_{t,1} x_{t,1}^\transp}{\calF_0}}^2} \\
            &= \frac{1}{b} \pth{\condexp{}{\trace{   x_{t,1} x_{t,1}^\transp {\calW_t^b}^\transp W_{t,2}^b {W_{t,2}^b}^\transp \calW_t^b x_{t,1} x_{t,1}^\transp }}{\calF_0} - \norm{\condexp{}{{W_{t,2}^b}^\transp \calW_t^b x_{t,1} x_{t,1}^\transp}{\calF_0}}^2} \\
            &= \frac{1}{b} \pth{\condexp{}{\condexp{}{\trace{x_{t,1} x_{t,1}^\transp {\calW_t^b}^\transp W_{t,2}^b {W_{t,2}^b}^\transp \calW_t^b x_{t,1} x_{t,1}^\transp }}{\calF_t^b}}{\calF_0} - \norm{\condexp{}{\condexp{}{{W_{t,2}^b}^\transp \calW_t^b x_{t,1} x_{t,1}^\transp}{\calF_t^b}}{\calF_0}}^2} \\
            &= \frac{1}{b} \pth{\condexp{}{ (p+2)\trace{{\calW_t^b}^\transp W_{t,2}^b {W_{t,2}^b}^\transp \calW_t^b} }{\calF_0} - \norm{\condexp{}{ {W_{t,2}^b}^\transp \calW_t^b }{\calF_0}}^2} \\
            &= \frac{1}{b} \pth{(p+2)\trace{\condexp{}{{\calW_t^b}^\transp W_{t,2}^b {W_{t,2}^b}^\transp \calW_t^b }{\calF_0}} - \norm{\condexp{}{ {W_{t,2}^b}^\transp \calW_t^b }{\calF_0}}^2}. \\
            &= \frac{1}{b} \pth{(p+2)\trace{\condexp{}{{\calW_t^b}^\transp W_{t,2}^b {W_{t,2}^b}^\transp \calW_t^b }{\calF_0}} - \norm{\condexp{}{ {W_{t,2}^b}^\transp \calW_t^b }{\calF_0}}^2}. \\
        \end{align*}
    Here we have used the fact that $\Expect_{x\sim \calN(0, I_p)}\trace{x x^\transp A x x^\transp} = (p+2) \trace{A}$. By Corollary \ref{thm:representationoCorollary} we know that there exists a set of multiplicative terms $\sth{M_{ij}^k, i\in [m_k], j \in \fromto{0}{m_{ki}}, k \in \fromto{0}{q}}$ of parameter matrices $\sth{W_{0,1}^b, W_{0,2}^b}$ and constant matrices $\sth{W_1^*, W_2^*}$ such that \begin{align}
            \trace{\condexp{}{{\calW_t^b}^\transp W_{t,2}^b {W_{t,2}^b}^\transp \calW_t^b}{\calF_0}} = \gamma_0 + \gamma_1 \frac{1}{b} + \cdots + \gamma_q \frac{1}{b^q}, \label{eq:gamma1}
        \end{align}
        where $\gamma_k = \sum_{i=1}^{m_k} \prod_{j=0}^{m_{ki}} \trace{M_{ij}^k}, k\in \fromto{0}{q}$. Here $m_k, m_{ki}$ and $q \le 6\cdot 3^{t}$ are constants independent of $b$, and $\sum_{j=0}^{m_{ki}} \deg\pth{M_{ij}^k} \le 6\cdot 3^{t}$. Note that $W_{0,1}^b, W_{0,2}^b$ are fixed, and we have $\gamma_k, k\in\fromto{0}{q}$ are constants independent of $b$.
    
    Similarly we observe that there exist constants $q' \le 2\cdot 3^{t+1}$ and $\gamma_k', k\in \fromto{0}{q'}$ such that 
        \begin{align}
            \norm{\condexp{}{ {W_{t,2}^b}^\transp \calW_t^b }{\calF_0}}^2 = \gamma_0' + \gamma_1' \frac{1}{b} + \cdots + \gamma_q' \frac{1}{b^{q'}}. \label{eq:gamma2}
        \end{align}
    By defining  $\gamma_i = 0, i > q$ and $\gamma_i' = 0, i > q'$, and combining \eqref{eq:gamma1} and \eqref{eq:gamma2} we have \begin{align*}
        \condvar{}{g_{t,1}^b}{\calF_0} &= \frac{1}{b} \pth{(p+2)\trace{\condexp{}{{\calW_t^b}^\transp W_{t,2}^b {W_{t,2}^b}^\transp \calW_t^b }{\calF_0}} - \norm{\condexp{}{ {W_{t,2}^b}^\transp \calW_t^b }{\calF_0}}^2} \\
        &= \frac{p+2}{b} \pth{\gamma_0 + \gamma_1 \frac{1}{b} + \cdots + \gamma_q \frac{1}{b^q}} - \frac{1}{b} \pth{\gamma_0' + \gamma_1' \frac{1}{b} + \cdots + \gamma_q' \frac{1}{b^{q'}}} \\
        &= \sum_{k=1}^{\max\sth{q,q'}} \pth{(p+1) \gamma_k - \gamma_k'} \frac{1}{b^k}.
    \end{align*}
        
    Note that $\gamma_k$'s and $\gamma_k'$'s are all constants independent of $b$, and $\max\sth{q, q'} \le 2\cdot 3^{t+1}$. This completes the proof.
    
    % product of trace of multiplicative terms of parameter matrices $W_0$ and constant matrices $W^*$. They are not related to the mini-batch size $b$. \xqcomment{Might be misleading here. Should we say not a function of $b$ instead? Do we need to mention $W_0$ and $W^*$ are all fixed prior to choosing $b$?} Also $\max\sth{q, q'} \le $. Therefore we have finished the proof.
    \end{proof}
    
    \begin{proof}[Proof of Theorem \ref{thm:varianceDecrease}]
        We first show that in \eqref{eq:2.7} we have $\beta_1 \ge 0$. If $r=1$, the statement obviously holds. Let us assume that the statement does not hold for $r > 1$, i.e. $\beta_1 < 0$. Taking $b$ large enough such that $\beta_1 b^{r-1} + \beta_2 b^{r-2} + \cdots + \beta_r < 0$ yields \begin{equation*}
            \condvar{}{g_{t,i}^b}{\calF_0} = \frac{1}{b^r} \pth{\beta_1 b^{r-1} + \beta_2 b^{r-2} + \cdots + \beta_r} < 0,
        \end{equation*}
        which contradicts the fact that $\condvar{}{g_{t,i}^b}{\calF_0} \ge 0$. Therefore, we have $\beta_1 \ge 0$.
    
    Let $b_0$ be large enough such that for all $b \ge b_0$, we have $\beta_1 b^{r-1} + 2\beta_2 b^{r-2} + \cdots + r \beta_r \ge 0$. We denote $f(b) = \beta_1 \frac{1}{b} + \beta_2 \frac{1}{b^2} + \cdots + \beta_r \frac{1}{b^r} \ge 0$. For all $b > b_0$ we have \begin{align*}
        f'(b) = - \frac{1}{b^{r+1}} \pth{\beta_1 b^{r-1} + 2\beta_2 b^{r-2} + \cdots + r \beta_r} \le 0.
    \end{align*}
    Therefore, for all $b > b_0$ we have $\pth{\condvar{}{g_{t,i}^b}{\calF_0}}' = -\frac{r}{b^{r+1}} f(b) + \frac{1}{b^r} f(b) \le 0 $, and thus $\condvar{}{g_{t,i}^b}{\calF_0}$ is a decreasing function of $b$ for all $b > b_0$.
    
    \end{proof} 
    
\section{Experimental Details} \label{appendix:exp}

In many experiments we fix the initial and ground-truth weights (in the case of Section \ref{subsec:linearNetwork}), and the learning rate. 
We have also tested several other random initial weights and ground-truth weights, and learning rates, and the results and conclusions are similar and not presented.

    \subsection{Graduate Admission Dataset with Linear Regression}
    The dataset is normalized by mean and variance of each feature.
    For the experiment in Figure~\ref{fig:linear}(a), we randomly select an initial weight vectors $w_0$ and run SGD for 2,000 iterations where it appears to converge. We record all statistics at every iteration. There are in total 1,000 runs behind each observation which yields a p-value lower than 0.05.
    As for Figure~\ref{fig:linear}(b), we select 20 different $b$'s and run SGD from the same initial point for 40 iterations. There are in total of 200,000 runs to make sure the p-value of all statistics are lower than 0.05. 
    In all experiments, the learning rate is chosen to be $\alpha_t = \frac{1}{2t}, t\in[2000]$ because this rate yields a theoretical convergence guaranteed (factor 1/2 has been fine tuned). 
    
    \subsection{Synthetic Dataset with Two-layer Linear Network}
    
    In Figure~\ref{fig:twolayer}, we randomly select two initial weight matrices $W_{0,1}, W_{0,2}$ and the ground-truth weight matrices $W_1^*, W_2^*$. We run SGD for 1,000 iterations which appears to be a good number for convergence while there are 1,000 runs of SGD in total to again give a p-value below 0.05. We record all statistics at every iteration. The learning rate is chosen to be $\alpha_t = \frac{1}{10t}, t\in [1000]$ for the same reason as in the regression experiment. 
    
    \subsection{MNIST with Fully Connected Neural Network} \label{appendix:MNIST}
    The images are normalized by mapping each entry to $[-1, 1]$. We run SGD for 1,000 epochs on the training set which is enough for convergence. The learning rate is a constant set to $3\cdot 10^{-3}$ (which has been tuned). For the experiment in Figure~\ref{fig:MNIST1}, there are in total 100 runs to give us the p-value below 0.05. For the experiment in Figure~\ref{fig:MNIST2}(a), we randomly select five different initial points and we have 50 runs for each initial point. 
    
    For the experiment corresponding to Figure~\ref{fig:MNIST2}(b), we choose $\alpha=8$ and $\sigma=2$ as in \cite{1227801}. The initial weights and other hyper-parameters are chosen to be the same as in Figure~\ref{fig:MNIST1}.
    
    \subsection{Yelp with XLNet} \label{appendix:yelp}
        We randomly select a set of initial parameters and run Adam with two different mini-batch sizes of 32 and 64. For computational tractability reasons, for each mini-batch size there are in total of 100 runs and each run corresponds to 20 epochs. We record the variance of the stochastic gradient, loss and accuracy in every step of Adam. The statistics reported in Figure~\ref{fig:YELP} are averaged through each epoch. In all experiments, the learning rate is set to be $4 \cdot 10^{-5}$ and the $\epsilon$ parameter of Adam is set to be $10^{-8}$ (these two have been tuned). The stochastic gradients of all parameter matrices are clipped with threshold 1 in each iteration. We use the same setup for the learning rate warm-up strategy as suggested in \cite{yang2019xlnet}. The maximum sequence length is set to be 128 and we pad the sequences with length smaller than 128 with zeros.

\end{document}